\documentclass[11pt]{article}
\usepackage{amsfonts, amsmath, amssymb, amscd, amsthm, color, graphicx, mathrsfs, mathabx, wasysym, setspace, mdwlist, calc,float}
\usepackage{setspace}
\usepackage{hyperref}
\usepackage{tikz-cd} 
 \hoffset -1.35cm \voffset -1.2cm \textwidth=6in \textheight=8in
 \tolerance=9000 \emergencystretch=5pt \vfuzz=2pt
 \parskip=1.5mm

 \unitlength=1mm

\usepackage{hyperref}
\usepackage{tocloft}
\setlength{\cftbeforesecskip}{8pt}
\setlength{\cftbeforesubsecskip}{3pt}

\usepackage{xcolor}

\usepackage{hyperref}
\hypersetup{linktocpage}

\hypersetup{colorlinks,
    linkcolor={red!50!black},
    citecolor={blue!80!black},
    urlcolor={blue!80!black}}
\usepackage{float}

\usepackage{comment}

\newcommand{\NN}{\mathbb{N}}
\newcommand{\ZZ}{\mathbb{Z}}
\newcommand{\QQ}{\mathbb{Q}}
\newcommand{\RR}{\mathbb{R}}
\newcommand{\CC}{\mathbb{C}}

\newcommand{\inj}{\hookrightarrow}

\newcommand{\tensor}{\overline{\otimes}}

\newtheorem{thm}{Theorem}[section]
\newtheorem{cor}[thm]{Corollary}
\newtheorem{lem}[thm]{Lemma}
\newtheorem{prop}[thm]{Proposition}

\newtheorem{conj}[thm]{Conjecture}
\theoremstyle{definition}
\newtheorem{defn}[thm]{Definition}

\theoremstyle{remark}
\newtheorem{rem}[thm]{Remark}

\newcommand\blfootnote[1]{%
  \begingroup
  \renewcommand\thefootnote{}\footnote{#1}%
  \addtocounter{footnote}{-1}%
  \endgroup
}

\begin{document}

\title{Bi-exactness of relatively hyperbolic groups}
\date{}
\author{Koichi Oyakawa}

\maketitle

\vspace{-10mm}

\begin{abstract}
    We prove that finitely generated relatively hyperbolic groups are bi-exact if and only if all peripheral subgroups are bi-exact. This is a generalization of Ozawa's result which claims that finitely generated relatively hyperbolic groups are bi-exact if all peripheral subgroups are amenable.
\end{abstract}

\section{Introduction}
\blfootnote{\textbf{MSC} Primary: 20F65. Secondary: 20F67, 20F99, 46L10.}
\blfootnote{\textbf{Key words and phrases}: bi-exact groups, relatively hyperbolic groups, proper arrays.}
Bi-exactness is an analytic property of groups defined by Ozawa in \cite{Oz2} (by the name of class $\mathcal{S}$). Recall that a group $G$ is \emph{exact} if there exists a compact Hausdorff space on which $G$ acts topologically amenably and \emph{bi-exact} if it is exact and there exists a map $\mu\colon G\to{\rm Prob}(G)$ such that for every $s,t\in G$, we have
\[\lim_{x\to \infty}\|\mu(sxt)-s.\mu(x)\|_1=0.\]
It is known that exactness is equivalent to Yu's property A (cf. \cite{Oz00}). \par

The notion of bi-exactness is of fundamental importance to the study of operator algebras. Notably, Ozawa proved in \cite{Oz3} that the group von Neumann algebra $L(G)$ of any non-amenable bi-exact icc group $G$ is prime (i.e. it cannot be decomposed as a tensor product of two $\rm{II}_1$ factors) by showing that $L(G)$ is solid. In \cite{OP}, Ozawa and Popa proved a unique prime factorization theorem which states that if $G_1,\cdots,G_n$ are non-amenable bi-exact icc groups and $L(G_1)\tensor\cdots\tensor L(G_n)$ is decomposed as a tensor product of $m$ $\rm{II}_1$ factors with $m\ge n$ i.e. $L(G_1)\tensor\cdots\tensor L(G_n)=\mathcal{N}_1\tensor\cdots\tensor\mathcal{N}_m$, then we have $m=n$ and after permutation of indices and unitary conjugation, each $\mathcal{N}_i$ is isomorphic to an amplification of $L(G_i)$. Subsequently, various rigidity results were proved for bi-exact groups. In particular, Sako showed measure equivalence rigidity for direct product (cf. \cite{Sako}) and Chifan and Ioana showed rigidity result in von Neumann algebra sense for amalgamated free product (cf. \cite{CI}).\par

It is known that the class of bi-exact groups contains amenable groups, hyperbolic groups, discrete subgroups of connected simple Lie groups of rank one, and $\ZZ^2 \rtimes SL_2(\ZZ)$ (cf. \cite{BO}\cite{Oz2}\cite{Oz09}). On the other hand, unlike exactness, bi-exactness is not preserved under some basic group theoretic constructions. For example, direct products and increasing unions of bi-exact groups are not necessarily bi-exact (e.g. $F_2\times F_2$ and $G\times F_2$, where $F_2$ is a free group of rank 2 and $G$ is an infinite countable locally finite group, are exact but not bi-exact). \par
The standard way of proving bi-exactness is using topologically amenable action on a special compact space called `boundary small at infinity'. Using this method, Ozawa proved in \cite{Oz1} that any finitely generated group hyperbolic relative to amenable peripheral subgroups is bi-exact. In this paper, we use a different approach and generalize Ozawa's result by proving the following.

\begin{thm}\label{Thm:main}
Suppose that $G$ is a finitely generated group hyperbolic relative to a collection of subgroups $\{H_\mu\}_{\mu \in \Lambda}$ of $G$. Then, $G$ is bi-exact if and only if all subgroups $H_\mu$ are bi-exact.
\end{thm}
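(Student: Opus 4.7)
My plan is to handle the two directions separately. The \emph{only if} direction is routine: the class $\mathcal{S}$ of bi-exact groups is closed under passage to subgroups, so bi-exactness of $G$ immediately gives bi-exactness of each peripheral $H_\mu$. This uses nothing about relative hyperbolicity.

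For the \emph{if} direction my plan is to pass through Ozawa's notion of \emph{bi-exactness of $G$ relative to a family of subgroups} $\{H_\mu\}$: $G$ is exact and there exists a map
\[
\zeta\colon G \to \operatorname{Prob}\Big(\bigsqcup_{\mu \in \Lambda} G/H_\mu\Big)
\]
with $\|\zeta(sgt) - s.\zeta(g)\|_1 \to 0$ as $g \to \infty$, for all $s,t \in G$. A stability result of Brown--Ozawa (Chap.~15 of \cite{BO}) says that if $G$ is bi-exact relative to $\{H_\mu\}$ and each $H_\mu$ is bi-exact, then $G$ itself is bi-exact; Ozawa's amenable-peripheral theorem is recovered as the case in which bi-exactness of each $H_\mu$ comes for free. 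Hence everything reduces to the following key claim: every finitely generated group hyperbolic relative to $\{H_\mu\}$ is bi-exact relative to $\{H_\mu\}$.

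My plan for the key claim is geometric. I work in Osin's relative Cayley graph $\hat\Gamma(G)$, which is $\delta$-hyperbolic and satisfies the bounded coset penetration property. For each $g \in G$ I choose a relative geodesic $\gamma_g$ from $e$ to $g$ in $\hat\Gamma(G)$, extract the sequence of peripheral cosets $P_1(g), \dots, P_{n(g)}(g)$ that $\gamma_g$ enters at cone-point vertices, and take $\zeta(g)$ to be a Cesaro-type weighted sum of Dirac masses on these cosets inside $\operatorname{Prob}(\bigsqcup_\mu G/H_\mu)$. Left-invariance $\zeta(sg) = s.\zeta(g)$ is automatic from the $G$-equivariance of the geodesic/coset assignment. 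The key input for right-invariance under $t$ is fellow-travelling of relative quasi-geodesics (Osin; Drutu--Sapir; Sisto): for fixed $t$, a relative geodesic from $e$ to $gt$ agrees with $\gamma_g$ outside a uniformly bounded endpoint neighborhood, so the sequences $(P_i(g))$ and $(P_i(gt))$ differ in only $O(1)$ terminal entries, which the Cesaro average kills in $\ell^1$ as $n(g)\to\infty$.

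The main obstacle, and where I expect the bulk of the technical work, is the class of elements $g$ whose relative geodesics visit only boundedly many peripheral cosets while $g \to \infty$ in the word metric — essentially, elements deep inside a single coset $g_0 H_\mu$. For such $g$ the Cesaro trick cannot be rescued by spreading mass across more cosets, and multiplication by a generic $t$ pushes $gt$ out of $g_0 H_\mu$, so a naive assignment gives $\|\zeta(gt) - \zeta(g)\|_1 = 2$. My plan here is to abandon the ``coset containing $g$'' rule in favour of a \emph{closest-point projection} to the nearest peripheral coset in the sense of Sisto's contracting projections, and to use the fact that when $g$ sits deep inside $g_0 H_\mu$ the projection of $gt$ still lies within a uniformly bounded relative neighborhood of $g_0 H_\mu$. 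Stitching the two regimes — the hyperbolic/Cesaro-across-many-cosets regime and the parabolic/deep-inside-one-coset regime — into a single map $\zeta$ satisfying both halves of the vanishing condition is the heart of the proof.
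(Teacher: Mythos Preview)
Your \emph{only if} direction matches the paper's (Lemma~\ref{Lem:subgroup of bi-exact group}). The \emph{if} direction, however, goes through a route that the paper explicitly flags as open.

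Your ``key claim'' --- that a finitely generated group hyperbolic relative to $\{H_\mu\}$ is automatically bi-exact relative to $\{H_\mu\}$ --- is precisely Conjecture~\ref{conj:folklore}, which the paper states is still open. Your sketch correctly isolates the obstruction: elements $g$ lying deep inside a single coset $g_0H_\mu$. But the proposed fix via Sisto-type projections does not close the gap. The difficulty is not just that $gt$ may leave $g_0H_\mu$; it is that you must produce a \emph{single} map $\zeta$ whose $\ell^1$-oscillation vanishes both in the transverse (many-cosets) regime and in the parabolic (one-coset) regime, with the two prescriptions agreeing on the interface. No known construction does this, and the paper's Remark~\ref{Rem:not proper} shows that even the Mineyev--Yaman bicombing --- a much more refined object than a Cesaro average of penetrated cosets --- fails to be proper relative to $\{H_\mu\}$ for exactly this reason. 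Your projection idea does not use bi-exactness of $H_\mu$ at all, so if it worked it would settle Conjecture~\ref{conj:folklore} in full; that is a strong signal that the stitching step is genuinely hard, not just technical.

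Separately, the ``stability result'' you attribute to Chapter~15 of \cite{BO} --- relative bi-exactness plus bi-exactness of each peripheral implies bi-exactness of $G$ --- is not there. Brown--Ozawa develop the relative notion and give several characterizations, but no such transitivity statement appears; you would have to prove it, and it is not obvious how to glue a boundary small relative to $\{H_\mu\}$ with boundaries small at infinity for each $H_\mu$.

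For contrast, the paper avoids both issues by working directly with the array characterization (Proposition~\ref{Prop:equiv cond of bi-exact}). It builds two arrays: one from the Mineyev--Yaman bicombing on the coned-off graph, which controls the relative word length (the transverse direction), and one from Hull--Osin separating cosets, which injects the given proper arrays on the $H_\mu$ into $\ell^2(G)$ (the parabolic direction). The sum is then shown to be proper. Crucially, the second array \emph{uses} the proper arrays on the peripherals --- this is where the hypothesis that each $H_\mu$ is bi-exact, not merely exact, enters --- and this is exactly what lets the paper bypass Conjecture~\ref{conj:folklore} rather than prove it.
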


The `Only if' direction is obvious and the whole paper is devoted to proving the `if' direction. In relation to this result, it is worth mentioning the following folklore conjecture.

\begin{conj}\label{conj:folklore}
Suppose that a finitely generated group $G$ is hyperbolic relative to a collection of subgroups $\{H_\mu\}_{\mu \in \Lambda}$ of $G$. If all subgroups $H_\mu$ are exact, then $G$ is bi-exact relative to $\{H_\mu\}_{\mu \in \Lambda}$.
\end{conj}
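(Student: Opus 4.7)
My plan is to decouple the argument into two stages. Stage one is to prove that $G$ is \emph{bi-exact relative to} the peripheral family $\{H_\mu\}_{\mu\in\Lambda}$, meaning that $G$ is exact and there exists a map $\eta\colon G\to\operatorname{Prob}(G)$ with $\|\eta(sxt)-s.\eta(x)\|_1\to 0$ as $x$ exits every fixed finite neighborhood of every coset $gH_\mu$, for all $s,t\in G$. Stage two is to combine this with the standing hypothesis that each $H_\mu$ is bi-exact to conclude full bi-exactness of $G$. The second stage is a general gluing principle implicit in the work of Ozawa (see the Brown--Ozawa monograph): one sews the relative map $\eta$ together with bi-exactness maps for the $H_\mu$, transported along coset representatives, by a partition of unity concentrated in a tubular neighborhood of the peripheral cosets. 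The bulk of the paper is therefore devoted to stage one, which is essentially the content of Conjecture~\ref{conj:folklore} at this level of generality (and is actually stronger than what the theorem needs, since only exactness of the $H_\mu$ is used as input in stage one).

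For stage one, I would work with a proper $\delta$-hyperbolic model of the relative geometry, such as the Groves--Manning combinatorial horoball cusped space $X$ or Osin's cusped Cayley graph, on which $G$ acts properly by isometries. For each $x\in G$, define $\eta(x)$ to be the uniform measure on the last $k$ vertices of $G$-type along a chosen geodesic from $1$ to $x$ in $X$, with mild averaging to kill dependence on the geodesic choice and to guarantee $\eta(x)\in\operatorname{Prob}(G)$. For fixed $s,t\in G$, the standard thin-triangle argument shows that the geodesics $[1,sxt]_X$ and $s\cdot[1,x]_X$ fellow-travel near their common far end once $x$ has left a sufficiently large ball in $X$. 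This fellow-traveling translates into the required relative convergence $\|\eta(sxt)-s.\eta(x)\|_1\to 0$, because the $X$-distance from $1$ to $x$ blows up precisely when $x$ escapes every bounded neighborhood of every peripheral coset in the Cayley graph of $G$.

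The principal obstacle I anticipate is controlling how the cusped-space geometry interacts with right multiplication by $t$: right multiplication is not an isometry of $X$, and geodesics in $X$ can take unboundedly long excursions into horoballs, so neither endpoint stability nor fellow-traveling is automatic in the presence of right twists. To handle this I would invoke Farb's bounded coset penetration property together with projections to peripheral cosets, replacing raw geodesics by canonical ``quasi-geodesics without backtracking'' whose transitions between hyperbolic travel and horoball penetration are stable under bounded perturbations of the endpoints. Ensuring that the resulting measure $\eta$ is well-defined up to negligible error under both left multiplication by $s$ and right multiplication by $t$ is the delicate technical heart of the argument; once it is in place, the fellow-traveling estimate transfers with only quantitative changes, and the gluing step of stage two proceeds along the same lines as in \cite{Oz1}.
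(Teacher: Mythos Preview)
The paper does \emph{not} prove Conjecture~\ref{conj:folklore}; it is explicitly flagged as open. Immediately after stating it, the author writes that the Mineyev--Yaman bicombing alone is not sufficient to derive it (see Remark~\ref{Rem:not proper}), and that ``Conjecture~\ref{conj:folklore} is still considered open.'' So there is no ``paper's own proof'' to compare against, and your proposal is an attempt to settle an open problem, not to reproduce a known argument.

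Your plan also misreads the logical structure of the paper. You propose proving Theorem~\ref{Thm:main} by first establishing relative bi-exactness (your stage one, which \emph{is} Conjecture~\ref{conj:folklore}) and then gluing with bi-exactness of the peripherals (stage two). The paper takes a completely different route: it never proves relative bi-exactness at all. Instead it builds a single proper array on $G$ directly, as a sum of two pieces. The first piece (Section~\ref{Subsection:first array}) uses the Mineyev--Yaman bicombing of the coned-off Cayley graph and controls the relative length $d_{\widehat{\Gamma}}(1,g)$; the second piece (Section~\ref{Subsection:second array}) uses Hull--Osin separating cosets to transport the given proper arrays on the $H_\mu$ out to $G$, and this is precisely where bi-exactness (not just exactness) of the peripherals is consumed. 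Neither piece alone is proper---Remark~\ref{Rem:not proper} shows the first is not even proper relative to the peripherals---but their sum is. So the paper's proof of Theorem~\ref{Thm:main} does not factor through your stage one/stage two decomposition.

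As for the substance of your stage-one sketch: the obstacle you yourself identify---right multiplication by $t$ is not an isometry of the cusped space, and geodesics can make unbounded horoball excursions---is exactly the reason this conjecture has resisted proof. Your proposed remedy (bounded coset penetration plus canonical quasi-geodesics without backtracking) is the natural first thing to try, but ``ensuring that the resulting measure $\eta$ is well-defined up to negligible error under both left multiplication by $s$ and right multiplication by $t$'' is not a step you have carried out; it is a restatement of the difficulty. Absent a concrete mechanism for controlling the right twist uniformly in the horoball depth, this remains a genuine gap, and the conjecture remains open.
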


For the definition of relative bi-exactness, the reader is referred to Definition 15.1.2 of \cite{BO}. Note that the `if' direction of Theorem \ref{Thm:main} is not a weak version of Conjecture \ref{conj:folklore}. Indeed, while the assumption of Theorem \ref{Thm:main} is stronger, the conclusion is also stronger. \par

Our proof of Theorem \ref{Thm:main} is based on a characterization of bi-exactness using exactness and the existence of a proper array (cf. Proposition \ref{Prop:equiv cond of bi-exact}) and uses two technologies related to relatively hyperbolic groups. The first one is a bicombing of fine hyperbolic graphs, which was constructed by Mineyev and Yaman in \cite{MY} using the same idea as in \cite{Mi}. The second one is based on the notion of separating cosets of hyperbolically embedded subgroups, which was introduced by Hull and Osin in \cite{HO} and developed further by Osin in \cite{Os2}. 
We will construct two arrays using each of these techniques and combine them to make a proper array. It is worth noting that the Mineyev-Yaman's bicombing of relatively hyperbolic groups alone is not sufficient to derive Conjecture \ref{conj:folklore} for the reason explained in Remark \ref{Rem:not proper}. Therefore, Conjecture \ref{conj:folklore} is still considered open.
\par
The paper is organized as follows. In Section \ref{Section: preliminary}, we discuss the necessary definitions and known results about bi-exact groups and relatively hyperbolic groups. In Section \ref{Subsection:overview}, we give an outline of our proof. Section \ref{Subsection:first array} and \ref{Subsection:second array} discuss the constructions of arrays based on ideas of \cite{MY} and \cite{HO}, respectively, and Section \ref{Subsection:proof of main thm} provides the proof of Theorem~\ref{Thm:main}.
\par

\vspace{2mm}

\noindent\textbf{Acknowledgment.}
I thank Denis Osin for introducing this topic to me, for sharing his ideas, and for many helpful discussions.

\section{Preliminary}\label{Section: preliminary}
\subsection{Bi-exact groups}
In this section, we introduce some equivalent conditions of bi-exact groups. The definition of an array given below was suggested in \cite{CSU}.

\begin{defn}\label{Def:array}
Suppose that $G$ is a group, $\mathcal{K}$ a Hilbert space, and $\pi\colon G\to \mathcal{U}(\mathcal{K})$ a unitary representation. A map $r\colon G\to \mathcal{K}$ is called an \emph{array} on $G$ into $(\mathcal{K},\pi)$, if $r$ satisfies (1) and (2) below. When there exists such $r$, we say that $G$ admits an array into $(\mathcal{K},\pi)$.
\begin{itemize}
    \item [(1)]
    $\pi_g(r(g^{-1}))=-r(g)$ for all $g\in G$.
    \item [(2)]
    For every $g\in G$, we have $\sup_{h\in G}\|r(gh)-\pi_g(r(h))\|<\infty$.
\end{itemize}
If, in addition, $r$ satisfies (3) below, $r$ is called \emph{proper}.
\begin{itemize}
    \item [(3)]
    For any $N\in \NN$, $\{g\in G\mid \|r(g)\|\le N\}$ is finite.
\end{itemize}
\end{defn}

Conditions (2) and (3) in Proposition \ref{Prop:equiv cond of bi-exact} are simplified versions of Proposition 2.3 of \cite{CSU} and Proposition 2.7 of \cite{PV} respectively.

\begin{prop}\label{Prop:equiv cond of bi-exact}
For any countable group $G$, the following three conditions are equivalent.
\begin{itemize}
    \item[(1)]
    $G$ is bi-exact.
    \item[(2)]
    $G$ is exact and admits a proper array into the left regular representation $(\ell^2(G),\lambda_G)$.
    \item[(3)]
    $G$ is exact, and there exist an orthogonal representation $\eta\colon G \to \mathcal{O}(K_\RR)$ to a real Hilbert space $K_\RR$ that is weakly contained in the regular representation of $G$ and a map $c\colon G\to K_\RR$ that is proper and satisfies
\[\sup_{k\in G}\|c(gkh)-\eta_g c(k)\|<\infty\] 
    for all $g,h\in G$.
\end{itemize}
\end{prop}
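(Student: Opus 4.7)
The plan is to reduce Proposition \ref{Prop:equiv cond of bi-exact} to the characterizations of bi-exactness established in \cite[Proposition 2.3]{CSU} and \cite[Proposition 2.7]{PV}, which the preceding paragraph identifies as the sources of conditions (2) and (3) respectively. The work is mostly bookkeeping: verifying that the simplified formulations here are equivalent to those in the literature, in particular that one may take the target of the array to be the regular representation itself rather than one merely weakly contained in it, and that one can move freely between the real and complex formulations.

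For (1) $\iff$ (2), my plan is to appeal to \cite[Proposition 2.3]{CSU}, which characterizes bi-exactness of a countable group as exactness together with the existence of a proper array into a unitary representation associated with the regular representation. If the CSU statement is phrased in terms of a general representation $(\mathcal{K}, \pi)$ weakly contained in $\lambda_G$, I would reduce to $(\ell^2(G), \lambda_G)$ via Fell's absorption principle: the unitary $U(\xi \otimes \delta_g) := \pi_g \xi \otimes \delta_g$ intertwines $\pi \otimes \lambda_G$ on $\mathcal{K} \otimes \ell^2(G)$ with a multiple of $\lambda_G$, and an array $r\colon G \to \mathcal{K}$ pulls back to $\tilde r(g) := r(g) \otimes \delta_g$, for which $\| \tilde r(gh) - (\pi \otimes \lambda_G)_g \tilde r(h) \| = \| r(gh) - \pi_g r(h) \|$ and $\| \tilde r(g) \| = \| r(g) \|$. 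This preserves both the defect bound of condition (2) in Definition \ref{Def:array} and properness.

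For (1) $\iff$ (3), my plan is to invoke \cite[Proposition 2.7]{PV}, which gives the analogous characterization in the real-Hilbert space setting: bi-exactness is equivalent to exactness plus a proper map into an orthogonal representation weakly contained in $\lambda_G$ satisfying a two-sided defect bound matching $\sup_k \|c(gkh) - \eta_g c(k)\| < \infty$. The passage (2) $\implies$ (3) is immediate by restriction of scalars (any unitary representation becomes orthogonal on the underlying real space). For (3) $\implies$ (2), I would complexify $\eta$ to a unitary representation on $K_\RR \otimes_\RR \CC$ and apply the Fell absorption argument above.

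I expect the main obstacle to be handling the antisymmetry axiom (1) of Definition \ref{Def:array} after the Fell absorption step. The naive antisymmetrization $r(g) \mapsto \tfrac{1}{2}(r(g) - \lambda_g r(g^{-1}))$ can collapse norms and destroy properness; a safer route is to arrange that the array produced by \cite{CSU} is already antisymmetric at the source, so that only the intertwining property of $U$ needs to be checked to transfer antisymmetry to the pullback $\tilde r$. The remaining verifications — that the defect bounds and properness conditions transport correctly under the various identifications — are routine.
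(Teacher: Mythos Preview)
The paper does not give its own proof of Proposition~\ref{Prop:equiv cond of bi-exact}; it is stated with the remark that conditions (2) and (3) are ``simplified versions'' of \cite[Proposition~2.3]{CSU} and \cite[Proposition~2.7]{PV}, and no argument is supplied. So your plan of deriving the statement from those references is exactly what the paper intends, and there is nothing further to compare against.

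That said, your Fell absorption step has a concrete gap. With $\tilde r(g) = r(g)\otimes\delta_g$ one has
\[
(\pi\otimes\lambda_G)_g\,\tilde r(g^{-1}) \;=\; \pi_g r(g^{-1})\otimes\lambda_g\delta_{g^{-1}} \;=\; -\,r(g)\otimes\delta_1,
\]
which lives over $\delta_1$, not $\delta_g$. Thus $\tilde r$ fails condition~(1) of Definition~\ref{Def:array} for every $g\neq 1$, \emph{regardless} of whether $r$ is antisymmetric. Your proposed remedy (``arrange that the array produced by \cite{CSU} is already antisymmetric at the source'') therefore does not help: antisymmetry of $r$ is already used in the computation above and still gives the wrong tensor leg.

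Moreover, the naive antisymmetrization you worry about fails here not for the reason you suggest (collapse of norms --- in fact $\tilde r(g)$ and $(\pi\otimes\lambda_G)_g\tilde r(g^{-1})$ are orthogonal, so properness survives) but because the defect bound breaks: the antisymmetrization of $\tilde r$ is $g\mapsto \tfrac{1}{2}r(g)\otimes(\delta_g+\delta_1)$, and a direct computation shows
\[
\Big\|\tfrac{1}{2}r(gh)\otimes(\delta_{gh}+\delta_1)-(\pi\otimes\lambda_G)_g\big(\tfrac{1}{2}r(h)\otimes(\delta_h+\delta_1)\big)\Big\|^2
\]
contains the summand $\tfrac{1}{4}\|r(h)\|^2$ supported over $\delta_g$, which is unbounded in $h$ since $r$ is proper. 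So neither route you sketch closes the circle $(3)\Rightarrow(2)$ by pure Hilbert-space manipulation; one really needs to pass through bi-exactness (condition~(1)) or invoke the full arguments in \cite{CSU} and \cite{PV} rather than just their statements.
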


\subsection{Relatively hyperbolic groups}
There are several equivalent definitions of relatively hyperbolic groups due to Gromov, Bowditch, Farb, and Osin. We adopt Farb's definition with the Bounded Coset Penetration property replaced by fineness of coned-off Cayley graphs. \cite{Hr}, \cite[Appendix]{Da}, and \cite[Appendix]{Os1} contain proofs of the equivalence of these definitions. To state our definition, we first prepare some terminologies. When we consider a graph $Y$ without loops or multiple edges, the edge set $E(Y)$ is defined as a irreflexive symmetric subset of $V(Y)\times V(Y)$, where $V(Y)$ is the vertex set (see Section \ref{Section:MY} for details).

\begin{defn}[Coned-off Cayley graph]
\label{coned off cayley graph}
Suppose that $G$ is a finitely generated group with a finite generating set $X$ and $\{H_\lambda\}_{\lambda\in\Lambda}$ is a collection of subgroups of $G$. Let $\Gamma(G,X)$ be the Cayley graph of $G$ with respect to $X$, in which we don't allow loops nor multiple edges. More precisely, its vertex set and edge set are defined by
\begin{equation}\label{Eq: Cayley graph}
    V(\Gamma(G,X))=G,\;\;\;
E(\Gamma(G,X))=\{(g,gs),(gs,g) \mid g\in G, s\in X\setminus\{1\}\}.
\end{equation}
We define a new graph $\widehat{\Gamma}$ whose vertex set $V(\widehat{\Gamma})$ and edge set $E(\widehat{\Gamma})$ are given by
\[V(\widehat{\Gamma})=G\sqcup \bigsqcup_{\lambda\in\Lambda}G/H_\lambda,\] \[E(\widehat{\Gamma})=E(\Gamma(G,X))\sqcup \bigsqcup_{\lambda\in\Lambda}\{(g,xH_\lambda),(xH_\lambda,g)\mid xH_\lambda\in G/H_\lambda\ , g\in xH_\lambda\}.\]
The graph $\widehat{\Gamma}$ is called \emph{coned-off Cayley graph} with respect to $X$.
\end{defn}

We set the length of any edge of $\widehat{\Gamma}$ to be 1. $G$ acts on $\widehat{\Gamma}$ naturally by graph automorphism. For a vertex $v\in V(\widehat{\Gamma})$, we denote its stabilizer by $G_v=\{g\in G\mid gv=v\}$.

\begin{defn}[Hyperbolic space]
A geodesic metric space $(X,d)$ is called \emph{hyperbolic} if there exists $\delta\in[0,\infty)$ such that for any $x,y,z\in X$, any geodesic path $[x,y],[x,z],[z,y]$, and any point $a\in[x,y]$, there exists $b\in[x,z]\cup[z,y]$ satisfying $d(a,b)\le\delta$.
\end{defn}

\begin{defn}[Circuit]
A closed path $p$ in a graph is called \emph{circuit}, if $p$ doesn't have self-intersection except its initial and terminal vertices.
\end{defn}

\begin{defn}[Fine graph]
A graph $Y$ is called \emph{fine}, if for any $n\in\NN$ and any edge $e$ of $Y$, the number of circuits of length $n$ containing $e$ is finite.
\end{defn}

The following is the definition of relatively hyperbolic groups on which we work. The reader is referred to \cite[Appendix]{Da} for the equivalence of Definition \ref{Def:rel hyp gp} and the other definitions.

\begin{defn}\label{Def:rel hyp gp}
A finitely generated group $G$ is called \emph{hyperbolic relative to} a collection of subgroups $\{H_\lambda\}_{\lambda\in\Lambda}$, if $\Lambda$ is finite and for some (equivalently, any) finite generating set $X$ of $G$, the coned-off Cayley graph $\widehat{\Gamma}$ is fine and hyperbolic. A member of the collection of subgroups $\{H_\lambda\}_{\lambda\in\Lambda}$ is called a \emph{peripheral subgroup}.
\end{defn}

Some definitions (e.g. Osin's definition in \cite{Os1}) can be stated without requiring $\Lambda$ to be finite. In Osin's definition, finiteness of $\Lambda$ follows if $G$ is finitely generated.

\subsection{Mineyev-Yaman's bicombing}\label{Section:MY}
In \cite{MY}, Mineyev and Yaman constructed a bicombing of a fine hyperbolic graph using the same idea as in \cite{Mi}. We first discuss 1-chains on a graph and a unitary representation of a group acting on a graph in general and then introduce Mineyev-Yaman's bicombing. \par
In this section, we consider graphs without loops nor multiple edges, following \cite{MY} and \cite{Bow}. More precisely, for a graph $Y$ with a vertex set $V(Y)$, its edge set $E(Y)$ is a subset of $V(Y)\times V(Y)\setminus\{(v,v)\mid v\in V(Y)\}$ such that $\overline{E(Y)}=E(Y)$, where we define $\overline{(u,v)}=(v,u)$ for any $(u,v)\in V(Y)^2$. We call a subset $E^+(V)$ of $E(Y)$ a set of {\it positive edges} if it satisfies $E(V)=E^+(V)\sqcup \overline{E^+(V)}$. Choosing a set of positive edges $E^+(V)$ means choosing directions of edges. Note that when we consider a group action on a graph, we allow inversion of edges. We begin with auxiliary notations.\par

For a graph $Y$, we denote by $C_1(Y)$ the set of 1-chains on $Y$ over $\CC$. More precisely, $C_1(Y)$ is defined as follows. Consider a direct product $\CC^{E(Y)}=\Big\{\sum_{e\in E(Y)}c_ee \;\Big|\; c_e\in\CC\Big\}$ of vector spaces over $\CC$. Note that this summation notation is just a formal sum. We define a quotient space $\ell^0(Y)$ by
\[\ell^0(Y)=\CC^{E(Y)} \;\Big/\; \Big\{\sum_{e\in E(Y)}c_ee\in \CC^{E(Y)} \; \Big| \; c_e=c_{\bar{e}} \; \forall e\in E(Y)\Big\}.\]
Here, we denote by $[e]=[u,v]$ the element in $\ell^0(Y)$ corresponding to $e=(u,v)\in \CC^{E(Y)}$ where $e=(u,v)\in E(Y)$. Note that for any $e=(u,v)\in E(Y)$, we have 
\[[\bar{e}]=[v,u]=-[u,v]=-[e].\]\par

Fix a set of positive edges $E^+(Y)$. The map
\[\CC^{E^+(Y)}\ni \sum_{e\in E^+(Y)}c_{e}e\mapsto
\sum_{e\in E^+(Y)}c_{e}[e]\in \ell^0(Y)\]
is an isomorphism of vector spaces. The space of 1-chains is defined by
\[C_1(Y)=\Big\{\sum_{e\in E^+(Y)}c_{e}[e]\in \ell^0(Y)\;\Big|\;
\exists F\subset E^+(Y),\; |F|<\infty \;\wedge\; c_e=0 \;\forall e\notin F\Big\}.\]
Note that $C_1(Y)$ is independent of the choice of $E^+(Y)$. For $a,b\in V(Y)$ and a path $q=(v_0,v_1,\cdots,v_n)$ from $a=v_0$ to $b=v_n$, we define a 1-chain 
\begin{equation}\label{Eq:path}
q=[v_0,v_1]+\cdots+[v_{n-1},v_n]\in C_1(Y).
\end{equation}

Here, we use the same notation $q$ to denote a path and the corresponding 1-chain by abuse of notation.

For each $p\in[1,\infty)$ (in fact, we use only cases $p=1, 2$), define a subspace $\ell^p(Y)$ of $\ell^0(Y)$ by
\[\ell^p(Y)=\Big\{\sum_{e\in E^+(Y)}c_{e}[e]\in \ell^0(Y)\;\Big|\; \sum_{e\in E^+(Y)}|c_{e}|^p<\infty\Big\}
\]
and for each $\xi=\sum_{e\in E^+(Y)}c_{e}[e]\in\ell^p(Y)$ define a norm \[\|\xi\|_p=\left(\sum_{e\in E^+(Y)}|c_{e}|^p\right)^{1/p}.\]  $(\ell^p(Y),\|\cdot\|_p)$ becomes a Banach space isomorphic to $\ell^p(E^+(Y))$. In particular, $(\ell^2(Y),\|\cdot\|_2)$ is a Hilbert space. Note that $(\ell^p(Y),\|\cdot\|_p)$ doesn't depend on the choice of $E^+(Y)$. We also have $C_1(Y)\subset\ell^p(Y)$ for any $p\in[1,\infty)$. \par

\begin{defn}\label{Def:unitary rep}
Suppose that a group $G$ acts on $Y$ by graph automorphisms, then $G$ acts on $(\ell^p(Y),\|\cdot\|_p)$ isometrically by
\[G\times \ell^p(Y)\ni \Big(g, \sum_{(u,v)\in E^+(Y)}c_{(u,v)}[u,v]\Big)\mapsto
\sum_{(u,v)\in E^+(Y)}c_{(u,v)}[gu,gv]\in \ell^p(Y).
\]
In particular, this action on $(\ell^2(Y),\|\cdot\|_2)$ becomes a unitary representation of $G$ and we denote this unitary representation by $(\ell^2(Y), \pi)$.
\end{defn}

There exists a unique well-defined linear map 
\[
\partial\colon C_1(Y)\to C_0(Y)
=\Big\{ \sum_{v\in F}c_vv \mid F\subset V(Y),\; |F|<\infty,\; c_v \in \CC \Big\}
\]
such that $\partial[u,v]=v-u$ for any $(u,v)\in E(Y)$.

\begin{defn}\label{Def:bicombing}
Suppose that $Y$ is a graph. A map $q\colon V(Y)^2\to C_1(Y)$ is called a \emph{homological bicombing}, if for any $(a,b)\in V(Y)^2$, we have $\partial q[a,b]=b-a$. In particular, if all coefficients of $q[a,b]$ are in $\QQ$ for any $a,b\in V(Y)$, $q$ is called a \emph{$\QQ$-bicombing}.
\end{defn}

\begin{defn}[2-vertex-connectivity]
A graph $Y$ is called \emph{2-vertex-connected}, if $Y$ is connected and for any $v\in V(Y)$, $Y\setminus\{v\}$ is connected, where $Y\setminus\{v\}$ is an induced subgraph of $Y$ whose vertex set is $V(Y)\setminus\{v\}$.
\end{defn}

Theorem \ref{Thm:bicombing} is a simplified version of Theorem 47 and Proposition 46 (3) in \cite{MY}.

\begin{thm}\label{Thm:bicombing}
Suppose that $Y$ is a 2-vertex-connected fine hyperbolic graph and $G$ is a group acting on $Y$. If the number of $G$-orbits of $E(Y)$ is finite and the edge stabilizer $G_e=G_u\cap G_v$ is finite for any $e=(u,v)\in E(Y)$, then there exists a $\QQ$-bicombing $q$ of $Y$ satisfying the following conditions.
\begin{itemize}
    \item[(1)]
    $q$ is $G$-equivariant, i.e. $q[ga,gb]=g\cdot q[a,b]$ for any $a,b\in V(Y)$ and any $g\in G$.
    \item[(2)]
    $q$ is anti-symmetric, i.e. $q[b,a]=-q[a,b]$ for any $a,b\in V(Y)$.
    \item[(3)]
    There exists a constant $T\ge0$ such that for any $a,b,c\in V(Y)$,
    \[\|q[a,b]+q[b,c]+q[c,a]\|_1\le T.\]
    \item[(4)]
    There exist constants $M'\ge0$ and $N'\ge0$ such that for any $a,b\in V(Y)$,
    \[\|q[a,b]\|_1\le M'd_Y(a,b)+N',\]
    where $d_Y$ is the graph metric of $Y$.
    \end{itemize}
\end{thm}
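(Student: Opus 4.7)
The plan is to follow the Mineyev-Yaman construction in \cite{MY}, which itself adapts the argument for hyperbolic groups in \cite{Mi} to the setting of fine hyperbolic graphs. I would proceed in three stages: first build a $G$-equivariant assignment satisfying only $\partial q_0[a,b]=b-a$, then deform it to gain the bounded triangle defect (3), and finally enforce anti-symmetry and read off the norm bound.

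For the first stage, using the finite-orbit and finite-edge-stabilizer hypotheses I pick a representative $(a_0,b_0)$ in each $G$-orbit on $V(Y)^2$ and choose a 1-chain realizing a geodesic from $a_0$ to $b_0$. Averaging over the finite joint stabilizer $G_{a_0}\cap G_{b_0}$ removes the ambiguity from different orbit representatives and produces a chain with coefficients in $\QQ$; transporting by the $G$-action yields a $G$-equivariant $\QQ$-bicombing $q_0$. This already satisfies (1) and (4) with constants coming from geodesic length, but there is no reason (3) should hold for $q_0$.

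For the second and central stage, I deform $q_0$ using the ``symmetric join'' / flow construction of \cite{MY}. For each pair $(a,b)$ one defines $q[a,b]$ as a weighted convex combination of chains that pass through a bounded neighborhood of a coarse center of geodesic triangles with one vertex at $a$ and another at $b$, the weights being chosen so that for every third vertex $c$ the cycle $q[a,b]+q[b,c]+q[c,a]$ telescopes outside a bounded region around the coarse center of $(a,b,c)$. Hyperbolicity provides a uniformly coarse center for every triangle, and here is where fineness of $Y$ is the decisive combinatorial input: it guarantees that the bounded region around the center contains only finitely many edges (so $q[a,b]\in C_1(Y)$) and that the resulting $\ell^1$-bound $T$ can be made uniform over all triples. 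This uniform control is the main obstacle of the entire argument, and is the content of the work in \cite{MY}; hyperbolicity alone would only give a bound depending on the triple.

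For the third stage, anti-symmetry (2) is enforced by replacing $q$ with $\tfrac{1}{2}\bigl(q[a,b]-q[b,a]\bigr)$, which preserves (1), leaves coefficients in $\QQ$, and at worst doubles the constant $T$ in (3). Property (4) then follows from the observation that each averaged chain is supported along geodesic-like paths of length $d_Y(a,b)+O(1)$, with only bounded multiplicative overhead from the averaging weights, yielding $\|q[a,b]\|_1\le M'd_Y(a,b)+N'$ for uniform $M',N'\ge 0$. The bulk of the real work is concentrated in stage two; stages one and three are essentially bookkeeping once the flow construction is in place.
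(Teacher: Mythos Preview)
The paper does not give its own proof of this theorem: it is quoted as a black box, stated as a simplified version of Theorem~47 and Proposition~46(3) of \cite{MY} (with the additional observation in Remark~\ref{Rem:convex} that each $q[a,b]$ is a convex combination of paths). Your outline is a reasonable high-level summary of the Mineyev--Yaman strategy, so in spirit it matches what the paper is invoking.

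Two comments on the sketch itself. First, in stage one you average over $G_{a_0}\cap G_{b_0}$, but the hypotheses only assert finiteness of \emph{edge} stabilizers $G_u\cap G_v$ for $(u,v)\in E(Y)$, not of joint stabilizers of arbitrary pairs of vertices; this finiteness for distinct vertices does follow from fineness together with finite edge stabilizers, but it is not automatic and needs to be argued. Second, your stage-two description (``deform $q_0$ through a coarse center'') is somewhat schematic compared to what \cite{MY} actually does: the bicombing there is built recursively on $d_Y(a,b)$ via averaged ``flowers'' of geodesics with exponentially decaying weights, rather than by first fixing a naive geodesic bicombing and then correcting it. Your description captures the role of hyperbolicity (coarse centers) and fineness (uniform finiteness of the relevant edge sets), which are indeed the two key inputs, but the recursive mechanism producing the uniform constant $T$ is the genuine technical content and is only gestured at here.
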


\begin{rem}\label{Rem:convex}
By examining the explicit construction of $q$ in Section 6 of \cite{MY}, we can see that for any $a,b\in V(Y)$, $q[a,b]$ is a convex combination of paths from $a$ to $b$ (see (\ref{Eq:path})), i.e. there exist paths $p_1,\cdots,p_n$ from $a$ to $b$ and $\alpha_1,\cdots,\alpha_n\in\QQ_{\ge0}$ with $\sum_{j=1}^n \alpha_j=1$ such that
\[q[a,b]=\sum_{j=1}^n \alpha_jp_j.\]
\end{rem}

\begin{rem}
Theorem \ref{Thm:bicombing} was actually proved for what Mineyev and Yaman called a `hyperbolic tuple' which was defined in their paper as a quadruple consisting of a graph, a group acting on it, a subset of the vertex set, and a set of vertex stabilizers (cf. \cite[Definitions 20, 27, 29, 38]{MY}). However, the statement and the proof of Theorem 47 and Proposition 46 (3) in \cite{MY} does not require the whole quadruple, and we simplify their exposition.
\end{rem}

\begin{rem}
Theorem \ref{Thm:bicombing} (4) is necessary only for Remark \ref{Rem:not proper}.
\end{rem}

\subsection{Separating cosets of hyperbolically embedded subgroups}
\label{Subsection:HO}
In this section, we define Hull-Osin's separating cosets and explain their properties required for our discussion in Section \ref{Subsection:second array}. The notion of separating cosets of hyperbolically embedded subgroups was first introduced by Hull and Osin in \cite{HO} and further developed by Osin in \cite{Os2}. There is one subtle difference in the definition of separationg cosets in \cite{HO} and in \cite{Os2}, which is explained in Remark \ref{difference}, though other terminologies and related propositions are mostly the same between them. With regards to this difference, we follow definitions and notations of \cite{Os2} in our discussion. In this section, suppose that $G$ is a group, $X$ is a subset of $G$, and $\{H_\lambda\}_{\lambda\in\Lambda}$ is a collection of subgroups of $G$ such that $X\cup (\bigcup_{\lambda\in\Lambda}H_\lambda)$ generates $G$. Note that $\Lambda$ and $X$ are possibly infinite. We begin with defining some auxiliary concepts. \par

Let $\mathcal{H}= \bigsqcup_{\lambda \in \Lambda}(H_\lambda \setminus \{1\})$. Note that this union is disjoint as sets of labels, not as subsets of $G$. Let $\Gamma(G, X\cup\mathcal{H})$ be the Cayley graph of $G$ with respect to $X\sqcup\mathcal{H}$, which allows loops and multiple edges, that is, its vertex set is $G$ and its positive edge set is $G\times(X\sqcup\mathcal{H})$. We call $\Gamma(G, X\cup\mathcal{H})$ \emph{the relative Cayley graph}. \par 
For each $\lambda\in\Lambda$, we consider the Cayley graph $\Gamma(H_\lambda,H_\lambda\setminus\{1\})$, which is a subgraph of $\Gamma(G, X\cup\mathcal{H})$, and define a metric $\widehat{d_\lambda}\colon H_\lambda \times H_\lambda \to [0,\infty]$ as follows. We say that a path $p$ in $\Gamma(G, X\cup\mathcal{H})$ is $\lambda$-\emph{admissible}, if $p$ doesn't contain any edge of $\Gamma(H_\lambda,H_\lambda\setminus\{1\})$. Note that $p$ can contain an edge whose label is an element of $H_\lambda$ (e.g. the case when the initial vertex of the edge is not in $H_\lambda$) and also $p$ can pass vertices of $\Gamma(H_\lambda,H_\lambda\setminus\{1\})$. For $f,g\in H_\lambda$, we define $\widehat{d_\lambda}$ to be the minimum of lengths of all $\lambda$-admissible paths from $f$ to $g$. If there is no $\lambda$-admissible path from $f$ to $g$, then we define $\widehat{d_\lambda}(f,g)=\infty$. For convenience, we extend $\widehat{d_\lambda}$ to $\widehat{d_\lambda}\colon  G\times G \to [0,\infty]$ by defining $\widehat{d_\lambda}(f,g)=\widehat{d_\lambda}(1,f^{-1}g)$ if $f^{-1}g\in H_\lambda$ and $\widehat{d_\lambda}(f,g)=\infty$ otherwise. \par

\begin{defn}\label{Def:hyp emb}
Suppose that $G$ is a group and $X$ is a subset of $G$. The collection of subgroups $\{H_\lambda\}_{\lambda\in\Lambda}$ of $G$ is said to be \emph{hyperbolically embedded in} $(G,X)$, if it satisfies the two conditions below.
\begin{itemize}
    \item [(1)] $X\cup (\bigcup_{\lambda\in\Lambda}H_\lambda)$ generates $G$ and $\Gamma(G, X\cup\mathcal{H})$ is hyperbolic.
    \item [(2)] For any $\lambda\in\Lambda$, $(H_\lambda,\widehat{d_\lambda})$ is locally finite, i.e. for any $n\in\NN$, 
    $\{g\in H_\lambda \mid \widehat{d_\lambda}(1,g)\le n\}$ is finite.
\end{itemize}
\end{defn}

The following is a simplified version of Proposition 4.28 of \cite{DGO}. 
\begin{prop}\label{Prop:hyp emb}
Suppose that $G$ is a finitely generated group hyperbolic relative to a collection of subgroups $\{H_\mu\}_{\mu \in \Lambda}$ of $G$. Then,
for any finite generating set $X$ of $G$, $\{H_\lambda\}_{\lambda\in\Lambda}$ is hyperbolically embedded in $(G,X)$.
\end{prop}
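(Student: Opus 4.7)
The plan is to verify the two conditions of Definition~\ref{Def:hyp emb}. The generating condition is immediate since $X$ alone generates $G$, so the real work is to show (a) that $\Gamma(G,X\cup\mathcal{H})$ is hyperbolic, and (b) that each metric space $(H_\lambda,\widehat{d_\lambda})$ is locally finite. Both will be deduced by comparing $\Gamma(G,X\cup\mathcal{H})$ with the coned-off Cayley graph $\widehat{\Gamma}$, which by Definition~\ref{Def:rel hyp gp} is fine and hyperbolic.

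The comparison rests on the following dictionary. An edge of $\Gamma(G,X\cup\mathcal{H})$ labeled by $s\in X$ is the same as an edge of $\widehat{\Gamma}$ between vertices of $G$, while an edge labeled by $h\in H_\mu\setminus\{1\}$ from $g$ to $gh$ corresponds to the length-$2$ path $g\to gH_\mu\to gh$ through the cone vertex $gH_\mu$; conversely, two consecutive edges of $\widehat{\Gamma}$ meeting at a cone vertex collapse to a single edge of $\Gamma(G,X\cup\mathcal{H})$. Counting edges along paths and their images gives
\[\tfrac{1}{2}d_{\widehat{\Gamma}}(g,g')\le d_{\Gamma(G,X\cup\mathcal{H})}(g,g')\le d_{\widehat{\Gamma}}(g,g')\]
for all $g,g'\in G$, and the image of $G$ is $1$-dense in $V(\widehat{\Gamma})$. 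Hence $\Gamma(G,X\cup\mathcal{H})$ is quasi-isometric to $\widehat{\Gamma}$, and hyperbolicity of the latter implies hyperbolicity of the former, yielding (a).

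For (b), fix $\lambda\in\Lambda$ and $n\in\NN$, and suppose $g\in H_\lambda$ satisfies $\widehat{d_\lambda}(1,g)\le n$. Pick a $\lambda$-admissible path $p$ from $1$ to $g$ in $\Gamma(G,X\cup\mathcal{H})$ of length at most $n$ and let $\hat{p}$ be its image in $\widehat{\Gamma}$ under the dictionary; $\hat{p}$ has length at most $2n$. The crucial point is that $\hat{p}$ avoids the cone vertex $H_\lambda$: any length-$2$ detour $h_1\to H_\lambda\to h_2$ with $h_1,h_2\in H_\lambda$ would translate back to an edge of $\Gamma(H_\lambda,H_\lambda\setminus\{1\})$, and $\lambda$-admissibility rules out precisely those edges (other cone vertices $gH_\mu\ne H_\lambda$ may well be traversed, which is harmless). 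Appending the two cone edges $e=(1,H_\lambda)$ and $e'=(H_\lambda,g)$ to $\hat{p}$ produces a closed walk of length at most $2n+2$ that visits $H_\lambda$ exactly once. Removing sub-loops yields a circuit of length at most $2n+2$ through $e$; since $H_\lambda$ still appears only once in it, this circuit also contains $e'$, which uniquely determines $g$. Fineness of $\widehat{\Gamma}$ bounds the number of such circuits through $e$, hence the number of such $g$.

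The main potential pitfall is the bookkeeping in step (b): one must track carefully which cone vertex is which when translating between $\Gamma(G,X\cup\mathcal{H})$ and $\widehat{\Gamma}$, and verify that the circuit extracted from the closed walk really retains both cone edges incident to $H_\lambda$, so that the assignment $g\mapsto\text{circuit}$ is injective and fineness actually bounds the number of $g$'s rather than merely the number of circuits. This is essentially the argument underlying Proposition 4.28 of \cite{DGO}, to which one could alternatively simply refer.
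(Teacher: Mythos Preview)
The paper does not give its own proof of this proposition; it simply records it as a simplified version of Proposition~4.28 of \cite{DGO} and moves on. Your proposal is a correct sketch of exactly that underlying argument---the quasi-isometry between $\Gamma(G,X\cup\mathcal{H})$ and $\widehat{\Gamma}$ for condition~(1), and the circuit-counting via fineness for condition~(2)---and you yourself note this at the end. So there is nothing to compare: you have supplied the details the paper chose to omit by citation, and your bookkeeping in part~(b) (that $\hat{p}$ avoids the cone vertex $H_\lambda$, and that both cone edges survive sub-loop removal so that $g$ is recoverable from the resulting circuit) is handled correctly.
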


In the remainder of this section, suppose that $\{H_\lambda\}_{\lambda\in\Lambda}$ is hyperbolically embedded in $(G,X)$.

\begin{defn}\cite[Definition 4.1]{Os2}
Suppose that $p$ is a path in the relative Cayley graph $\Gamma(G, X\cup\mathcal{H})$. A subpath $q$ of $p$ is called an \emph{$H_\lambda$-subpath} if the labels of all edges of $q$ are in $H_\lambda$. In the case $p$ is a closed path, $q$ can be a subpath of any cyclic shift of $p$. An $H_\lambda$-subpath $q$ of a path $p$ is called \emph{$H_\lambda$-component} if $q$ is not contained in any longer $H_\lambda$-subpath of $p$. In the case $p$ is a closed path, we require that $q$ is not contained in any longer $H_\lambda$-subpath of any cyclic shift of $p$. Further, by a \emph{component}, we mean an $H_\lambda$-component for some $H_\lambda$. Two $H_\lambda$-components $q_1,q_2$ of a path $p$ is called \emph{connected}, if all vertices of $q_1$ and $q_2$ are in the same $H_\lambda$-coset. An $H_\lambda$-component $q$ of a path $p$ is called \emph{isolated}, if $q$ is not connected to any other $H_\lambda$-component of $p$.
\end{defn}

\begin{rem}
Note that all vertices of an $H_\lambda$-component lie in the same $H_\lambda$-coset.
\end{rem}

The following proposition is important, which is a particular case of Proposition 4.13 of \cite{DGO}.

\begin{prop}\emph{\cite[Lemma 2.4]{HO}}\label{Prop:2.4 of HO}
There exists a constant $C>0$ such that for any geodesic $n$-gon $p$ in $\Gamma(G,X\cup\mathcal{H})$ and any isolated $H_\lambda$-component $a$ of $p$, we have
\[\widehat{d_\lambda}(a_-,a_+)\le nC.\]
\end{prop}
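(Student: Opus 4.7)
The plan is to combine the hyperbolicity of $\Gamma(G, X\cup\mathcal{H})$ with the isolation of $a$ to produce a $\lambda$-admissible path from $a_-$ to $a_+$ whose length is linear in $n$. Let $\delta$ denote the hyperbolicity constant of $\Gamma(G, X\cup\mathcal{H})$. Two preliminary observations drive the proof: (a) $a$ is a single edge of $\Gamma(G, X\cup\mathcal{H})$, because the side of $p$ containing $a$ is a geodesic and two consecutive $H_\lambda$-edges on a geodesic could be merged into a single one (their labels multiply inside $H_\lambda$), contradicting minimality; (b) the complementary path $r := p \setminus \{a\}$ from $a_+$ back to $a_-$ is $\lambda$-admissible, since any edge of $r$ with label in $H_\lambda \setminus \{1\}$ and both endpoints in the coset $a_- H_\lambda$ would be contained in an $H_\lambda$-component of $p$ connected to $a$, contradicting isolation. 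Observation (b) gives the naive bound $\widehat{d_\lambda}(a_-, a_+) \le |p| - 1$, which is unsatisfactory because $|p|$ is not controlled by $n$ alone.

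The refinement to a bound linear in $n$ proceeds by induction, with the triangle case as the main work. For $n = 3$, the $\delta$-thin tripod approximation of a geodesic triangle furnishes vertices $z_\pm$ on $p_2 \cup p_3$ with $d_{\Gamma(G,X\cup\mathcal{H})}(a_\pm, z_\pm) \le 4\delta$. Because $a_-, a_+$ are adjacent on the geodesic $p_1$, their tripod matches land within $O(\delta)$ of each other (whether they both match on the same side, or straddle the central matching point), so the concatenation $a_- \to z_- \to z_+ \to a_+$ is a path of length at most some $C_0 = C_0(\delta)$. For $n \ge 4$, split $p$ by a geodesic diagonal into a geodesic triangle and a geodesic $(n{-}1)$-gon, apply the triangle estimate to the sub-polygon containing $a$, and iterate via the inductive hypothesis on the other; summing yields $\widehat{d_\lambda}(a_-, a_+) \le nC$ for a suitable $C$.

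The main obstacle is ensuring $\lambda$-admissibility of the path produced by hyperbolicity. The macroscopic pieces coming from the sides of $p$ inherit $\lambda$-admissibility from the isolation argument that handled $r$, but the short bridging geodesics $a_\pm \to z_\pm$, and the diagonals introduced in the inductive step, could a priori traverse edges of $\Gamma(H_\lambda, H_\lambda \setminus \{1\})$ translated to the coset $a_-H_\lambda$. The key tool to bypass this is the local finiteness of $(H_\lambda, \widehat{d_\lambda})$ guaranteed by hyperbolic embeddedness: only finitely many such forbidden edges lie within any bounded neighborhood of $a_\pm$, so they can be sidestepped by small detours at the cost of an additive constant absorbed into $C$. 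The second subtlety is that, when splitting the $n$-gon by a diagonal, new $H_\lambda$-edges on the diagonal can a priori lie in the coset $a_-H_\lambda$ and thus break the isolation of $a$ within the sub-polygon; one must verify that the diagonal can be chosen to avoid this, or otherwise reroute through the offending coset at controlled cost.
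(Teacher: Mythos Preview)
The paper does not prove this statement; it is cited from \cite[Lemma~2.4]{HO}, itself a special case of \cite[Proposition~4.13]{DGO}, and those references argue via the relative isoperimetric function attached to the hyperbolically embedded collection (van Kampen diagrams over a bounded relative presentation) rather than through thin-triangle geometry.

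Your sketch contains a genuine circularity at the $\lambda$-admissibility step. The claim that local finiteness of $(H_\lambda,\widehat{d_\lambda})$ forces ``only finitely many forbidden edges within any bounded neighbourhood of $a_\pm$'' is false: the entire coset $a_-H_\lambda$ has $d_{X\cup\mathcal{H}}$-diameter at most $1$, so when $H_\lambda$ is infinite there are infinitely many $H_\lambda$-labelled edges with both endpoints in $a_-H_\lambda$ already inside the unit ball about $a_-$. Local finiteness of $\widehat{d_\lambda}$ is a statement about the relative metric, not the word metric, and gives no finiteness in $d_{X\cup\mathcal{H}}$-balls. More fundamentally, ``sidestepping'' a forbidden edge $e$ on a bridging geodesic means replacing $e$ by a $\lambda$-admissible path from $e_-$ to $e_+$, and the minimal length of such a path is precisely $\widehat{d_\lambda}(e_-,e_+)$; bounding that quantity for an $H_\lambda$-edge across a coset is exactly the proposition you are trying to establish. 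The same circularity recurs in the inductive step: when the diagonal carries an $H_\lambda$-component $b$ in $a_-H_\lambda$, any ``rerouting at controlled cost'' requires a bound on some $\widehat{d_\lambda}(b_-,b_+)$ or $\widehat{d_\lambda}(a_\pm,b_\mp)$, which you do not possess. Thin triangles convert word-metric proximity into short word-metric paths, but nothing in your outline upgrades these to $\lambda$-admissible paths without already assuming the conclusion.
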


In what follows, we fix any constant $D>0$ with
\begin{equation}\label{Eq:D>3C}
D \ge 3C.
\end{equation}

We can now define separating cosets.

\begin{defn}\cite[Definition 4.3]{Os2}
\label{separating cosets}
A path $p$ in $\Gamma(G,X\cup\mathcal{H})$ is said to \emph{penetrate} a coset $xH_\lambda$ for some $\lambda\in\Lambda$, if $p$ decomposes as $p_1ap_2$, where $p_1,p_2$ are possibly trivial, $a$ is an $H_\lambda$-component, and $a_-\in xH_\lambda$. Note that if $p$ is a geodesic, $p$ penetrates any coset of $H_\lambda$ at most once. In this case, $a$ is called the \emph{component of} $p$ \emph{corresponding to} $xH_\lambda$ and also the vertices $a_-$ and $a_+$ are called the \emph{entrance and exit points} of $p$ and are denoted by $p_{in}(xH_\lambda)$ and $p_{out}(xH_\lambda)$ respectively. If in addition, we have $\widehat{d_\lambda}(a_-,a_+)>D$, then we say that $p$ \emph{essentially penetrates} $xH_\lambda$. For $f,g\in G$ and $\lambda\in\Lambda$, if there exists a geodesic path from $f$ to $g$ in $\Gamma(G,X\cup\mathcal{H})$ which essentially penetrates an $H_\lambda$-coset $xH_\lambda$, then $xH_\lambda$ is called an $(f,g;D)$-\emph{separating coset}. We denote the set of $(f,g;D)$-separating $H_\lambda$-cosets by $S_\lambda(f,g;D)$.
\end{defn}

\begin{rem}\label{difference}
In Definition 3.1 of \cite{HO}, whenever $f,g\in G$ are in the same $H_\lambda$-coset $xH_\lambda$ for some $\lambda\in\Lambda$, $xH_\lambda$ is included in $S_\lambda(f,g;D)$, but in our Definition \ref{separating cosets}, $S_\lambda(f,g;D)$ can be empty even in this case.
\end{rem}

The following lemma is immediate from the above definition.

\begin{lem}\emph{\cite[Lemma 3.2]{HO}}\label{Lem:3.2 of HO}
For any $f,g,h\in G$ and any $\lambda\in\Lambda$, the following holds.
\begin{itemize}
    \item [(a)]
    $S_\lambda(f,g;D)=S_\lambda(g,f;D)$.
    \item[(b)]
    $S_\lambda(hf,hg;D)=\{hxH_\lambda \mid xH_\lambda\in S_\lambda(f,g;D)\}$.
\end{itemize}
\end{lem}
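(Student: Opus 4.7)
Both assertions are symmetry statements: (a) expresses invariance of $S_\lambda(\cdot,\cdot;D)$ under path reversal, and (b) expresses $G$-equivariance under the natural left multiplication action on $\Gamma(G,X\cup\mathcal{H})$. The plan is to unpack Definition \ref{separating cosets} in each direction and verify that the three ingredients of ``essential penetration'' — being a geodesic, containing the relevant $H_\lambda$-component, and having $\widehat{d_\lambda}$-length greater than $D$ at its endpoints — are each preserved by the respective operation.

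For (a), given a geodesic $p$ from $f$ to $g$ that essentially penetrates $xH_\lambda$ with corresponding $H_\lambda$-component $a$, I would consider the reversed path $\bar{p}$ from $g$ to $f$. Reversal preserves length, so $\bar{p}$ is a geodesic; it sends the component $a$ to a component $\bar{a}$ of $\bar{p}$ whose vertices still lie in $xH_\lambda$; and since $\widehat{d_\lambda}$ is symmetric (immediate from its definition via $\lambda$-admissible paths, which are unaltered by reversal), $\widehat{d_\lambda}(\bar{a}_-,\bar{a}_+)=\widehat{d_\lambda}(a_+,a_-)=\widehat{d_\lambda}(a_-,a_+)>D$. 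Hence $\bar{p}$ witnesses $xH_\lambda\in S_\lambda(g,f;D)$, giving one inclusion; the other is identical.

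For (b), the key observation is that left multiplication by $h$ is a label-preserving graph automorphism of $\Gamma(G,X\cup\mathcal{H})$: it sends the edge $(u,v)$ labeled $s\in X\cup\mathcal{H}$ to $(hu,hv)$ with the same label. Consequently it carries geodesics to geodesics, $H_\lambda$-subpaths to $H_\lambda$-subpaths, and hence $H_\lambda$-components to $H_\lambda$-components. Moreover $\widehat{d_\lambda}$ is invariant under this action: $\widehat{d_\lambda}(hf,hg)=\widehat{d_\lambda}(1,(hf)^{-1}(hg))=\widehat{d_\lambda}(1,f^{-1}g)=\widehat{d_\lambda}(f,g)$, using the definition of $\widehat{d_\lambda}$ on arbitrary pairs. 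Applying $h\cdot$ to a geodesic $p$ from $f$ to $g$ essentially penetrating $xH_\lambda$ yields a geodesic from $hf$ to $hg$ essentially penetrating $hxH_\lambda$, and the inverse operation (left multiplication by $h^{-1}$) supplies the reverse implication.

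There is no real obstacle here; the statement is essentially a bookkeeping exercise once one records that both reversal and left multiplication respect each clause of Definition \ref{separating cosets}. The only mild care required is in checking that the notion of ``$H_\lambda$-component'' (which involves maximality among $H_\lambda$-subpaths) is preserved, but this is immediate because both operations are bijections on edges that fix the label sets $H_\lambda\setminus\{1\}$ setwise.
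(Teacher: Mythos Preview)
Your proposal is correct and matches the paper's treatment: the paper simply states that the lemma is immediate from Definition~\ref{separating cosets} and gives no further argument, and your plan is exactly the routine unpacking of that definition under reversal and left translation.
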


We state some nice properties of separating cosets, all of which were proven in \cite{HO}. For $f,g\in G$, we denote by $\mathcal{G}(f,g)$ the set of all geodesic paths in $\Gamma(G,X\cup\mathcal{H})$ from $f$ to $g$.

\begin{lem}\emph{\cite[Lemma 3.3]{HO}}\label{Lem:3.3 of HO}
For any $\lambda\in\Lambda$, any $f,g\in G$, and any $(f,g;D)$-separating coset $xH_\lambda$, the following holds.
\begin{itemize}
    \item [(a)]
    Every path in $\Gamma(G,X\cup\mathcal{H})$ connecting $f$ to $g$ and composed of at most 2 geodesics penetrates $xH_\lambda$.
    \item [(b)]
    For any $p,q\in \mathcal{G}(f,g)$, we have
    \[\widehat{d_\lambda}(p_{in}(xH_\lambda),q_{in}(xH_\lambda))\le3C \;\;and\;\;
    \widehat{d_\lambda}(p_{out}(xH_\lambda),q_{out}(xH_\lambda))\le3C.\]
\end{itemize}
\end{lem}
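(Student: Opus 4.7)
The plan is to deduce both statements from Proposition \ref{Prop:2.4 of HO} by attaching the given paths to a witnessing geodesic and applying the isolated-component bound to a geodesic $n$-gon with $n \le 3$, exploiting $D \ge 3C$ from (\ref{Eq:D>3C}).

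For part (a), I would choose a geodesic $p_0 \in \mathcal{G}(f, g)$ that essentially penetrates $xH_\lambda$, so $p_0$ decomposes as $p_0^{(1)} a_0 p_0^{(2)}$ with $a_0$ an $H_\lambda$-component at $xH_\lambda$ satisfying $\widehat{d_\lambda}((a_0)_-, (a_0)_+) > D$. Given a path $r$ from $f$ to $g$ made of at most two geodesic pieces, the closed path $p_0 \overline{r}$ is a geodesic $n$-gon with $n \le 3$. Suppose for contradiction that $r$ does not penetrate $xH_\lambda$. Then no $H_\lambda$-component of $r$ has vertices in $xH_\lambda$, and the geodesic $p_0$ contributes only $a_0$ at this coset. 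A short check, using the maximality of $a_0$ in $p_0$ together with the assumption that $r$ avoids $xH_\lambda$ even at its endpoints, shows that $a_0$ persists as an isolated $H_\lambda$-component of $p_0 \overline{r}$. Proposition \ref{Prop:2.4 of HO} then gives $\widehat{d_\lambda}((a_0)_-, (a_0)_+) \le nC \le 3C \le D$, contradicting essential penetration.

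For part (b), fix $p, q \in \mathcal{G}(f, g)$ with decompositions $p = p_1 a p_2$ and $q = q_1 b q_2$, where $a$ and $b$ are the $H_\lambda$-components corresponding to $xH_\lambda$. We may assume $p_{in} \neq q_{in}$, in which case both points lie in the single coset $xH_\lambda$, so there is an $\mathcal{H}$-edge $e_1$ from $p_{in}$ to $q_{in}$ labelled by $p_{in}^{-1} q_{in} \in H_\lambda$. I would then form the geodesic triangle $p_1 e_1 \overline{q_1}$ and verify that $e_1$ is an isolated $H_\lambda$-component. The crucial observation is that a geodesic in $\Gamma(G, X \cup \mathcal{H})$ cannot contain two connected $H_\lambda$-components, since a single $\mathcal{H}$-edge would yield a strictly shorter path between their extremes; consequently, neither $p_1$ nor $q_1$ contains an $H_\lambda$-component at $xH_\lambda$. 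Combined with the maximality of $a$ in $p$ and $b$ in $q$ (which forces the last edges of $p_1$ and $q_1$ to lie outside $H_\lambda$), this isolates $e_1$ in the triangle. Proposition \ref{Prop:2.4 of HO} now yields $\widehat{d_\lambda}(p_{in}, q_{in}) \le 3C$, and the symmetric triangle $\overline{p_2} e_2 q_2$ based on an $\mathcal{H}$-edge $e_2$ between $q_{out}$ and $p_{out}$ gives the corresponding bound for the exit points.

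The only real obstacle is the bookkeeping required to certify that the distinguished $H_\lambda$-edges form isolated $H_\lambda$-components of the constructed polygons. Once this is secured via the two standard facts used above, namely that a geodesic in $\Gamma(G, X \cup \mathcal{H})$ has no two connected $H_\lambda$-components, and that maximality of an $H_\lambda$-component forbids the adjacent edges of the ambient path from lying in $H_\lambda$, both inequalities fall out of Proposition \ref{Prop:2.4 of HO} together with the choice $D \ge 3C$.
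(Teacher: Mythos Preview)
Your proposal is correct and follows exactly the standard argument from \cite[Lemma 3.3]{HO}: attach a witnessing geodesic to the given path(s) to form a geodesic $n$-gon with $n\le 3$, check that the relevant $H_\lambda$-component is isolated, and apply Proposition \ref{Prop:2.4 of HO} together with $D\ge 3C$. The paper does not supply its own proof here but defers to \cite{HO}, so there is nothing further to compare.
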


Proof of Lemma \ref{Lem:3.3 of HO} is the same as Lemma 3.3 of \cite{HO}, though their statements are slightly different. Actually, in Lemma \ref{Lem:3.3 of HO}, we don't need to assume $f^{-1}g\notin H_\lambda$. This difference comes from the difference of definitions of separating cosets, which was mentioned in Remark \ref{difference}.

\begin{cor}\emph{\cite[Corollary 3.4]{HO}}
\label{Cor:separating cosets are finite}
For any $f,g\in G$ and any $p\in \mathcal{G}(f,g)$, we have $S_\lambda(f,g;D)\subset P_\lambda(p)$, where we define $P_\lambda(p)$ to be the set of all $H_\lambda$-cosets which $p$ penetrates. In particular, we have $|S_\lambda(f,g;D)|\le d_{X\cup\mathcal{H}}(f,g)$, hence $S_\lambda(f,g;D)$ is finite.
\end{cor}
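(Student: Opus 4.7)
The plan is to derive this corollary as an immediate consequence of Lemma \ref{Lem:3.3 of HO}(a), followed by a trivial length-counting argument for the cardinality bound.

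For the inclusion $S_\lambda(f,g;D)\subset P_\lambda(p)$, I would fix an arbitrary $xH_\lambda \in S_\lambda(f,g;D)$ and observe that the geodesic $p \in \mathcal{G}(f,g)$ is a path from $f$ to $g$ in $\Gamma(G, X\cup \mathcal{H})$ consisting of a single geodesic, which in particular qualifies as a path composed of at most $2$ geodesics. Applying Lemma \ref{Lem:3.3 of HO}(a) directly yields that $p$ penetrates $xH_\lambda$, and hence $xH_\lambda \in P_\lambda(p)$ by the definition of $P_\lambda(p)$. Since $xH_\lambda$ was arbitrary, this proves the inclusion.

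For the cardinality bound, I would use the fact noted in Definition \ref{separating cosets} that a geodesic penetrates any single coset at most once. Consequently, distinct cosets in $P_\lambda(p)$ correspond to disjoint $H_\lambda$-components of $p$; since each such component contains at least one edge of $p$, the total number of cosets in $P_\lambda(p)$ is bounded by the number of edges of $p$, which equals $d_{X\cup\mathcal{H}}(f,g)$. Combining this with the inclusion just established gives $|S_\lambda(f,g;D)| \le |P_\lambda(p)| \le d_{X\cup\mathcal{H}}(f,g) < \infty$, proving finiteness.

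There is no substantive obstacle here: the corollary is really just a packaging of Lemma \ref{Lem:3.3 of HO}(a) together with the trivial observation that a geodesic of length $n$ in $\Gamma(G,X\cup\mathcal{H})$ cannot penetrate more than $n$ distinct cosets. The only minor subtlety is noting that the hypothesis ``at most $2$ geodesics'' of Lemma \ref{Lem:3.3 of HO}(a) trivially covers the single-geodesic case, which needs no extra argument.
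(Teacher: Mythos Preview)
Your proposal is correct and is precisely the intended derivation: the paper states this corollary without proof (citing \cite[Corollary 3.4]{HO}), and it follows immediately from Lemma~\ref{Lem:3.3 of HO}(a) together with the observation that a geodesic has at most $d_{X\cup\mathcal{H}}(f,g)$ components.
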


The following lemma makes $S_\lambda(f,g;D)$ into a totally ordered set.

\begin{lem}\emph{\cite[Lemma 3.5]{HO}}
Suppose $f,g\in G$ and that $p\in \mathcal{G}(f,g)$ penetrates an $H_\lambda$-coset $xH_\lambda$ and decomposes as $p=p_1ap_2$, where $p_1,p_2$ are possibly trivial and $a$ is an $H_\lambda$-component corresponding to $xH_\lambda$. Then, we have $d_{X\cup\mathcal{H}}(f,a_-)=d_{X\cup\mathcal{H}}(f,xH_\lambda)$.
\end{lem}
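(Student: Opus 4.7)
The plan is to establish the two inequalities separately. Since $a_-\in xH_\lambda$, the bound $d_{X\cup\mathcal{H}}(f,xH_\lambda)\le d_{X\cup\mathcal{H}}(f,a_-)$ is immediate from the definition of coset distance. The nontrivial direction is to show $d_{X\cup\mathcal{H}}(f,y)\ge d_{X\cup\mathcal{H}}(f,a_-)$ for every $y\in xH_\lambda$, which I would prove by a path-replacement argument against the geodesic $p$.

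The key preliminary observation is that, because $p$ is a geodesic and $a$ is an $H_\lambda$-component of $p$, the component $a$ consists of exactly one edge. Indeed, any two distinct vertices of $xH_\lambda$ are joined by a single edge in $\Gamma(G,X\cup\mathcal{H})$ (whose label is the corresponding nontrivial element of $H_\lambda$), so if $a$ had two or more edges it would fail to be a geodesic between $a_-$ and $a_+$, contradicting the fact that subpaths of geodesics are geodesics. In particular, setting $n_1=\mathrm{length}(p_1)=d_{X\cup\mathcal{H}}(f,a_-)$ and $n_2=\mathrm{length}(p_2)=d_{X\cup\mathcal{H}}(a_+,g)$, we have $d_{X\cup\mathcal{H}}(f,g)=n_1+1+n_2$.

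Now fix any $y\in xH_\lambda$ and let $q_1$ be a geodesic in $\Gamma(G,X\cup\mathcal{H})$ from $f$ to $y$. If $y=a_+$, concatenating $q_1$ with $p_2$ produces a path from $f$ to $g$ of length $d_{X\cup\mathcal{H}}(f,a_+)+n_2$, so comparison with $p$ yields $d_{X\cup\mathcal{H}}(f,a_+)\ge n_1+1\ge n_1$. If $y\ne a_+$, the elements $y$ and $a_+$ lie in the same coset $xH_\lambda$ with $y^{-1}a_+\in H_\lambda\setminus\{1\}$, so they are joined by a single edge $e$ in $\Gamma(G,X\cup\mathcal{H})$; then $q_1\cdot e\cdot p_2$ is a path from $f$ to $g$ of length $d_{X\cup\mathcal{H}}(f,y)+1+n_2$, and comparison with the geodesic $p$ gives $d_{X\cup\mathcal{H}}(f,y)+1+n_2\ge n_1+1+n_2$, i.e.\ $d_{X\cup\mathcal{H}}(f,y)\ge n_1=d_{X\cup\mathcal{H}}(f,a_-)$.

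I do not expect a genuine obstacle here: the only point requiring a moment's thought is the observation that an $H_\lambda$-component of a geodesic in $\Gamma(G,X\cup\mathcal{H})$ consists of exactly one edge, after which the conclusion follows from a direct triangle-inequality comparison. No input from hyperbolicity, fineness, or Proposition~\ref{Prop:2.4 of HO} is used; the argument depends only on the structure of the relative Cayley graph together with $p$ being a geodesic.
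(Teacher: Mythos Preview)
Your argument is correct. Note, however, that the paper does not supply its own proof of this lemma; it is simply quoted from \cite[Lemma~3.5]{HO}. What you have written is exactly the elementary path-replacement argument one expects (and is essentially the proof given in \cite{HO}): the inequality $d_{X\cup\mathcal{H}}(f,xH_\lambda)\le d_{X\cup\mathcal{H}}(f,a_-)$ is immediate, and for the reverse you shortcut any point $y\in xH_\lambda$ to $a_+$ by a single $H_\lambda$-edge and compare with the geodesic $p$. Your remark that an $H_\lambda$-component of a geodesic has length exactly one is correct and standard; strictly speaking it is not even needed, since if $a$ had length $k\ge1$ the same comparison would yield $d_{X\cup\mathcal{H}}(f,y)+1+n_2\ge n_1+k+n_2\ge n_1+1+n_2$, but recording it does no harm. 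As you note, no hyperbolicity or fineness is involved; only the coset structure of $\Gamma(G,X\cup\mathcal{H})$ and the fact that $p$ is geodesic.
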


\begin{defn}\cite[Definition 3.6]{HO}
Given any $f,g\in G$, a relation $\preceq$ on the set $S_\lambda(f,g;D)$ is defined as follows.
\[xH_\lambda \preceq yH_\lambda \iff d_{X\cup\mathcal{H}}(f,xH_\lambda)\le d_{X\cup\mathcal{H}}(f,yH_\lambda).\]
\end{defn}

Next, we define a set of pairs of entrance and exit points of a separating coset. That is, for $f,g\in G$, $\lambda\in\Lambda$, and $xH_\lambda\in S_\lambda(f,g;D)$, we define
\[E(f,g;xH_\lambda,D)=\{(p_{in}(xH_\lambda),p_{out}(xH_\lambda))\mid p\in \mathcal{G}(f,g)\}.\]
Note that because $xH_\lambda$ is a $(f,g;D)$-separating coset, any geodesic from $f$ to $g$ penetrates $xH_\lambda$ by Lemma \ref{Lem:3.3 of HO} (a).

\begin{lem}\emph{\cite[Lemma 3.8]{HO}}
\label{Lem:3.8 of HO}
For any $f,g,h\in G$, $\lambda\in\Lambda$, and $xH_\lambda\in S_\lambda(f,g;D)$, the following holds.
\begin{itemize}
    \item [(a)]
    $E(g,f;xH_\lambda,D)=\{(v,u)\mid (u,v)\in E(f,g;xH_\lambda,D)\}$.
    \item [(b)]
    $E(hf,hg;hxH_\lambda,D)=\{(hu,hv)\mid (u,v)\in E(f,g;xH_\lambda,D)\}$.
    \item [(c)]
    $|E(f,g;xH_\lambda,D)|<\infty$.
\end{itemize}
\end{lem}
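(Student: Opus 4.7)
The plan is to derive parts (a) and (b) from straightforward symmetry and equivariance of geodesics in $\Gamma(G,X\cup\mathcal{H})$, and then to obtain the finiteness in part (c) by combining Lemma \ref{Lem:3.3 of HO}(b) with the local finiteness built into the definition of hyperbolic embeddedness; (c) is the main obstacle.

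For (a), I would use that orientation reversal $p\mapsto\bar p$ is a bijection $\mathcal{G}(f,g)\to\mathcal{G}(g,f)$; since $S_\lambda(g,f;D)=S_\lambda(f,g;D)$ by Lemma \ref{Lem:3.2 of HO}(a), the coset $xH_\lambda$ is also $(g,f;D)$-separating, and the $H_\lambda$-component of $\bar p$ corresponding to $xH_\lambda$ is the reversal of the component of $p$ corresponding to $xH_\lambda$, so entrance and exit points simply swap. For (b), left multiplication by $h$ is a label-preserving graph automorphism of $\Gamma(G,X\cup\mathcal{H})$, so $p\mapsto hp$ is a bijection $\mathcal{G}(f,g)\to\mathcal{G}(hf,hg)$; together with Lemma \ref{Lem:3.2 of HO}(b) this carries the $H_\lambda$-component corresponding to $xH_\lambda$ to the $H_\lambda$-component corresponding to $hxH_\lambda$, and its entrance and exit points are translated by $h$. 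A small check needed in each case is that the reversed or translated component still essentially penetrates its coset, which is immediate from $\widehat{d_\lambda}$ being symmetric and invariant under left multiplication.

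The main work lies in (c). I would fix a reference geodesic $p_0\in\mathcal{G}(f,g)$ and let $u_0,v_0$ denote its entrance and exit points at $xH_\lambda$. For any $(u,v)\in E(f,g;xH_\lambda,D)$, Lemma \ref{Lem:3.3 of HO}(b) gives $\widehat{d_\lambda}(u_0,u)\le 3C$ and $\widehat{d_\lambda}(v_0,v)\le 3C$. Since $u_0$ and $u$ both lie in the single coset $xH_\lambda$, the element $u_0^{-1}u$ lies in $H_\lambda$ and satisfies $\widehat{d_\lambda}(1,u_0^{-1}u)\le 3C$ by the extension formula $\widehat{d_\lambda}(a,b)=\widehat{d_\lambda}(1,a^{-1}b)$; local finiteness of $(H_\lambda,\widehat{d_\lambda})$ from Definition \ref{Def:hyp emb}(2) then bounds the number of such $u_0^{-1}u$ by a finite quantity, so only finitely many $u$ are possible. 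The same reasoning bounds the number of admissible $v$, and therefore $E(f,g;xH_\lambda,D)$ is finite. The only foreseeable annoyance is bookkeeping the passage from the $\widehat{d_\lambda}$-bounds on pairs in $G\times G$ to the local-finiteness statement in $H_\lambda$ via the extension formula, which is routine.
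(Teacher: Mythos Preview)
Your argument is correct. The paper does not actually supply a proof of this lemma; it is simply quoted from \cite[Lemma~3.8]{HO}, so there is no in-paper proof to compare against. Your approach---symmetry and equivariance of geodesics for (a) and (b), and Lemma~\ref{Lem:3.3 of HO}(b) combined with local finiteness of $(H_\lambda,\widehat{d_\lambda})$ for (c)---is the natural one and matches what is done in \cite{HO}.

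One small wording issue: the ``small check'' you flag is not quite that the reversed or translated component \emph{essentially} penetrates. What you need is only that $xH_\lambda$ remains a separating coset in the new setting (which is exactly Lemma~\ref{Lem:3.2 of HO}, as you cite), so that $E(g,f;xH_\lambda,D)$ and $E(hf,hg;hxH_\lambda,D)$ are defined; once that holds, Lemma~\ref{Lem:3.3 of HO}(a) guarantees that \emph{every} geodesic penetrates $xH_\lambda$ (not necessarily essentially), hence entrance and exit points exist for all $p$. Your computation is unaffected by this, but the phrasing could be tightened.
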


\begin{lem}
For any $f,g,h\in G$ and any $xH_\lambda\in S_\lambda(f,g;D)$, $xH_\lambda$ is either penetrated by all geodesics from $f$ to $h$ or penetrated by all geodesics from $h$ to $g$.
\end{lem}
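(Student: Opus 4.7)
The plan is to argue by contradiction using Lemma \ref{Lem:3.3 of HO}(a), which asserts that every path from $f$ to $g$ composed of at most two geodesics must penetrate $xH_\lambda$. Suppose for contradiction that there exist a geodesic $p_1$ from $f$ to $h$ and a geodesic $q_2$ from $h$ to $g$, neither of which penetrates $xH_\lambda$. Concatenating gives a path $p := p_1 q_2$ from $f$ to $g$ built out of two geodesics, so by Lemma \ref{Lem:3.3 of HO}(a) the path $p$ must penetrate $xH_\lambda$. I will derive a contradiction by showing that, under the hypothesis, no $H_\lambda$-component of $p$ has its endpoints in $xH_\lambda$.

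The main step is to classify the $H_\lambda$-components of $p$ according to whether they meet the junction vertex $h$. Any $H_\lambda$-component of $p$ that does not contain $h$ is already an $H_\lambda$-component of either $p_1$ or $q_2$ (its maximality is preserved in $p$, since it cannot be extended across $h$), and hence by our assumption its vertices lie outside $xH_\lambda$. Any $H_\lambda$-component of $p$ that does contain $h$ has all of its vertices in the coset $hH_\lambda$.

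It remains to handle $H_\lambda$-components passing through $h$, which splits on whether $h \in xH_\lambda$. If $h \notin xH_\lambda$, then $hH_\lambda \neq xH_\lambda$ and such components automatically miss $xH_\lambda$. If instead $h \in xH_\lambda$, I would argue that neither $p_1$ nor $q_2$ has an $H_\lambda$-edge incident to $h$: otherwise the $H_\lambda$-component of $p_1$ (or $q_2$) containing that edge would lie entirely in $hH_\lambda = xH_\lambda$, forcing $p_1$ (or $q_2$) to penetrate $xH_\lambda$, contrary to assumption. Hence in this case no $H_\lambda$-component of $p$ touches $h$ at all, and we are reduced to the first subcase.

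Combining both cases, no $H_\lambda$-component of $p$ has endpoints in $xH_\lambda$, so $p$ does not penetrate $xH_\lambda$, contradicting Lemma \ref{Lem:3.3 of HO}(a). The only delicate configuration is when the junction vertex $h$ itself lies in $xH_\lambda$, but this is defused by the observation that any $H_\lambda$-edge incident to $h$ would already force one of the two geodesics to penetrate $xH_\lambda$, contradicting the assumption of failure for both.
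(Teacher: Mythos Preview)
Your proof is correct and follows the same idea as the paper's: both apply Lemma~\ref{Lem:3.3 of HO}(a) to the concatenation of a geodesic from $f$ to $h$ with a geodesic from $h$ to $g$. The paper's proof is a one-liner (fix a geodesic $q\in\mathcal{G}(f,h)$ not penetrating $xH_\lambda$, then for any $r\in\mathcal{G}(h,g)$ the path $qr$ penetrates by Lemma~\ref{Lem:3.3 of HO}(a), hence $r$ must penetrate), leaving implicit exactly the junction analysis you spell out; your case split on whether $h\in xH_\lambda$ is a valid way to justify that implicit step.
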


\begin{proof}
Suppose there exists a geodesic $q\in\mathcal{G}(f,h)$ which doesn't penetrate $xH_\lambda$. For any geodesic $r\in\mathcal{G}(h,g)$, we can apply Lemma \ref{Lem:3.3 of HO} (a) to a path $qr$ and conclude $r$ penetrates $xH_\lambda$.
\end{proof}

The following lemma is crucial in constructing an array that satisfies the bounded area condition.

\begin{lem}\emph{\cite[Lemma 3.9]{HO}}\label{Lem:3.9 of HO}
For any $f,g,h\in G$ and any $\lambda\in\Lambda$, $S_\lambda(f,g;D)$ can be decomposed as $S_\lambda(f,g;D)=S'\sqcup S''\sqcup F$ where
\begin{itemize}
    \item [(a)]
    $S'\subset S_\lambda(f,h;D)\setminus S_\lambda(h,g;D)$ and we have $E(f,g;xH_\lambda,D)=E(f,h;xH_\lambda,D)$ for any $xH_\lambda\in S'$,
    \item [(b)]
    $S''\subset S_\lambda(h,g;D)\setminus S_\lambda(f,h;D)$ and we have $E(f,g;xH_\lambda,D)=E(h,g;xH_\lambda,D)$ for any $xH_\lambda\in S''$,
    \item [(c)]
    $|F|\le2$.
\end{itemize}
\end{lem}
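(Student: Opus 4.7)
The plan is to partition $S_\lambda(f,g;D)$ according to which sides of the geodesic triangle with vertices $f$, $h$, $g$ force penetration at the coset in question, and then derive each of (a), (b), (c) from the isolated-component estimate of Proposition \ref{Prop:2.4 of HO} applied to short shortcut polygons.

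For each $xH_\lambda \in S_\lambda(f,g;D)$, the lemma immediately preceding Lemma \ref{Lem:3.9 of HO} guarantees that every geodesic in $\mathcal{G}(f,h)$ penetrates $xH_\lambda$ or every geodesic in $\mathcal{G}(h,g)$ does. Let $A$ (resp.\ $B$) denote the set of $xH_\lambda \in S_\lambda(f,g;D)$ in the first (resp.\ second) class, and set
\[
S' := A \setminus B, \qquad S'' := B \setminus A, \qquad F := A \cap B.
\]
These are disjoint with union $S_\lambda(f,g;D)$. Moreover, by Lemma \ref{Lem:3.3 of HO}(a), every coset in $S_\lambda(h,g;D)$ is penetrated by every geodesic in $\mathcal{G}(h,g)$, so $S_\lambda(h,g;D) \cap S_\lambda(f,g;D) \subset B$, hence $S' \cap S_\lambda(h,g;D) = \emptyset$; symmetrically $S'' \cap S_\lambda(f,h;D) = \emptyset$.

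For (a), fix $xH_\lambda \in S'$. Since $xH_\lambda \notin B$, choose $r_0 \in \mathcal{G}(h,g)$ that does not penetrate $xH_\lambda$. Pick any $p = p_1 a p_2 \in \mathcal{G}(f,g)$ realizing an element $(a_-,a_+) \in E(f,g;xH_\lambda,D)$ and any $q = q_1 b q_2 \in \mathcal{G}(f,h)$, where $a$ and $b$ are the $H_\lambda$-components at $xH_\lambda$. Form the geodesic triangle $p \cdot r_0^{-1} \cdot q^{-1}$; its only components in $xH_\lambda$ are $a$ and $b^{-1}$, and these are connected. Replace each of the two arcs of the triangle joining $a$ to $b^{-1}$ by the single $H_\lambda$-edge between its endpoints (such edges exist because the endpoints lie in $xH_\lambda$). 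The resulting geodesic $4$-gon has each inserted edge isolated, so Proposition \ref{Prop:2.4 of HO} bounds their $\widehat{d_\lambda}$-length by at most $4C$; a short $\widehat{d_\lambda}$-triangle-inequality argument using $D \ge 3C$ and $\widehat{d_\lambda}(a_-,a_+) > D$ then forces $a_- = b_-$ and $a_+ = b_+$. This simultaneously yields $(a_-,a_+) \in E(f,h;xH_\lambda,D)$ and $\widehat{d_\lambda}(b_-,b_+) > D$, so $xH_\lambda \in S_\lambda(f,h;D)$; swapping the roles of $p$ and $q$ gives the reverse inclusion of $E$-sets, completing (a). Part (b) follows by symmetry using Lemma \ref{Lem:3.2 of HO}(a).

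For (c), suppose $F$ contained three distinct cosets $x_1H_\lambda, x_2H_\lambda, x_3H_\lambda$, labeled in increasing $\preceq$-order. Each is penetrated exactly once by any $q \in \mathcal{G}(f,h)$ and once by any $r \in \mathcal{G}(h,g)$; since entrance points minimize $d_{X \cup \mathcal{H}}(f,\cdot)$ on the coset, the $\preceq$-order on the three cosets coincides with the order of penetrations of $q$ read from $f$. Choose $p \in \mathcal{G}(f,g)$ essentially penetrating $x_2 H_\lambda$ and form the triangle $p \cdot r^{-1} \cdot q^{-1}$. Shortcut the components of $q^{-1}$ and $r^{-1}$ corresponding to $x_1 H_\lambda$ and $x_3 H_\lambda$ through single $H_\lambda$-edges inside their respective cosets; the resulting polygon has a bounded number of sides and isolates the $x_2 H_\lambda$-component of $p$, so Proposition \ref{Prop:2.4 of HO} forces $\widehat{d_\lambda}(a_-,a_+) \le nC$ for a small constant $n$, contradicting $\widehat{d_\lambda}(a_-,a_+) > D \ge 3C$ provided the shortcut polygon is arranged to have at most three sides abutting that component. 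The main obstacle is this bookkeeping: arranging the shortcuts so that the side count of the resulting polygon makes the bound of Proposition \ref{Prop:2.4 of HO} strictly less than $D$. This is precisely where the threshold $D \ge 3C$ in \eqref{Eq:D>3C} is used.
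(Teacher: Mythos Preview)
First, note that the paper itself does not prove this lemma; it is quoted verbatim from \cite[Lemma~3.9]{HO} and used as a black box in the proof of Lemma~\ref{Lem:bounded area}. So there is no in-paper argument to compare yours against, and what follows is an assessment of your sketch on its own terms.

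Your argument for part~(a) contains a genuine gap. After forming the shortcut polygon and invoking Proposition~\ref{Prop:2.4 of HO}, the most you can extract is a bound of the form $\widehat{d_\lambda}(a_-,b_-)\le 3C$ and $\widehat{d_\lambda}(a_+,b_+)\le 4C$ (and even this requires setting up the polygons more carefully than you indicate: replacing \emph{both} arcs by $H_\lambda$-edges leaves you with a $4$-gon entirely inside $xH_\lambda$, where nothing is isolated). You then assert that a triangle-inequality argument using $D\ge 3C$ and $\widehat{d_\lambda}(a_-,a_+)>D$ ``forces $a_-=b_-$ and $a_+=b_+$''. This is simply false: a bound on $\widehat{d_\lambda}$-distance between two points never forces them to coincide. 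Worse, since you chose $p\in\mathcal G(f,g)$ and $q\in\mathcal G(f,h)$ \emph{arbitrarily}, your conclusion would say that every geodesic in $\mathcal G(f,g)$ and every geodesic in $\mathcal G(f,h)$ share identical entry and exit points at $xH_\lambda$, i.e.\ both $E$-sets are singletons, which is certainly not true in general. The statement $E(f,g;xH_\lambda,D)=E(f,h;xH_\lambda,D)$ is an equality of \emph{sets}: proving it requires, for each pair $(a_-,a_+)$ realised by some $p\in\mathcal G(f,g)$, exhibiting a specific geodesic in $\mathcal G(f,h)$ with that same entry/exit pair, and conversely. This calls for a construction (splicing initial and terminal segments of geodesics, using that entry and exit points realise $d_{X\cup\mathcal H}(f,xH_\lambda)$ and $d_{X\cup\mathcal H}(h,xH_\lambda)$ as in the lemma preceding Definition~3.6), not merely a distance estimate.

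Your sketch for part~(c) is also incomplete, as you yourself flag: you do not actually arrange the shortcut polygon so that Proposition~\ref{Prop:2.4 of HO} gives a bound strictly below $D$, and the phrase ``provided the shortcut polygon is arranged to have at most three sides'' is an assumption, not an argument. In \cite{HO} the bound $|F|\le 2$ is obtained via the linear order $\preceq$ on $S_\lambda(f,g;D)$ together with a careful analysis of where the two penetration regimes can switch; it is not a direct corollary of a single application of Proposition~\ref{Prop:2.4 of HO}.
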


\section{Main theorem}\label{Section:main section}

\subsection{Overview of proof}\label{Subsection:overview}
In this section, we explain the idea of the construction of a proper array in the proof of Proposition \ref{Prop:main result} (see (\ref{Eq:main rep}) (\ref{Eq:main array})) by considering two particular cases. \par

First, suppose that $G$ is a hyperbolic group i.e., the collection of peripheral subgroups is empty. We take a symmetric finite generating set $X_0$ of $G$ with $1\in X_0$ and define 
\[X=X_0^2=\{gh \mid g,h\in X_0\}.\]
The Cayley graph $\Gamma(G,X)$, as defined by (\ref{Eq: Cayley graph}), has no loops or multiple edges and is a 2-vertex-connected locally finite hyperbolic graph. Let $Y$ be the barycentric subdivision of $\Gamma(G,X)$. Note that $Y$ is fine, $G$ acts on $Y$ without inversion of edges, and all edge stabilizers are trivial. By Theorem \ref{Thm:bicombing}, there exists a $G$-equivariant anti-symmetric $\QQ$-bicombing $q$ of $Y$ and a constant $T\ge 0$ such that for any $a,b,c\in V(Y)$, we have
\[\|q[a,b]+q[b,c]+q[c,a]\|_1\le T.
\]
We define a map $Q\colon G \to \ell^2(Y)$ by (see Definition \ref{Def:modify} for details of this definition)
\[Q(g)=\widetilde{q[1,g]}.
\]
It is straightforward to check that $Q$ is an array into $(\ell^2(Y),\pi)$ (cf. Definition \ref{Def:unitary rep}) by using Lemma \ref{Lem:l1 to l2} (2). Also, we have
\[d_Y(1,g)\le\|q[1,g]\|_1=\|Q(g)\|_2^2
\]
(see Remark \ref{Rem:convex}, Lemma \ref{Lem:linear combination of paths}, and Lemma \ref{Lem:l1 to l2} (1)). Since $Y$ is a barycentric subdivision of $\Gamma(G,X)$, we have $d_Y(1,g)=2d_X(1,g)$ for any $g\in G$, where $d_X$ is a word metric on $G$ with respect to $X$. Hence, for any $N\in \NN$, if $g\in G$ satisfies $\|Q(g)\|_2\le N$, we have
\[d_X(1,g)=\frac{1}{2}d_Y(1,g) \le \frac{1}{2}\|Q(g)\|_2^2 \le \frac{1}{2}N^2.
\]
This implies that $Q$ is proper. Combined with Proposition \ref{Prop:equiv cond of bi-exact} (3), this gives another proof of the fact that hyperbolic groups are bi-exact. Here, note that $(\ell^2(Y),\pi)$ is a direct sum of copies $(\ell^2(G),\lambda_G)$, because $G$ acts on $Y$ without inversion of edges and its action on $E(Y)$ is free. Hence, it is weakly contained by $(\ell^2(G),\lambda_G)$. We can also use Mineyev's bicombing in \cite{Mi} instead of Theorem \ref{Thm:bicombing} in this case.
\par

Second, suppose that $G$ is a free product of bi-exact groups $H_1$ and $H_2$. In this case, $\{H_1,H_2\}$ are peripheral subgroups and we can construct a proper array on $G$ using the normal forms of elements of the free product and proper arrays on $H_1$ and $H_2$ as follows. Let $r_i\colon H_i\to \ell^2(H_i)$ be a proper array into the left regular representation $(\ell^2(H_i),\lambda_{H_i})$ for each $i=1,2$. We can assume $\|r_i(a)\|_2\ge 1$ for any $a\in H_i \setminus \{1\}$ by changing values of $r_i$ on a finite subset of $H_i$, if necessary. Also, we regard each $r_i$ as a map from $H_i$ to $\ell^2(G)$ by composing it with the embedding $\ell^2(H_i) \inj \ell^2(G)$. For $g\in G\setminus\{1\}$, without loss of generality, let $g=h_1k_1\cdots h_nk_n$ be the normal form of $g$, where $h_1,\cdots,h_n\in H_1\setminus\{1\}$ and $k_1,\cdots,k_n\in H_2\setminus\{1\}$. We define a map $R_1\colon G\to \ell^2(G)$ by
\[R_1(g)=\lambda_G(1)r_1(h_1)+\lambda_G(h_1k_1)r_1(h_2)+\cdots+\lambda_G(h_1k_2\cdots h_{n-1}k_{n-1})r_1(h_n).
\]
Here, we define $R_1(1)=0$. In the same way, we define a map $R_2\colon G\to \ell^2(G)$ from $r_2$. It is not difficult to show that the map $R\colon G \to \ell^2(G) \oplus \ell^2(G)$ defined by
\[R(g)=(R_1(g),R_2(g))
\]
is a proper array into $(\ell^2(G) \oplus \ell^2(G),\lambda_G\oplus\lambda_G)$.
\par

The case of general relatively hyperbolic groups is a combination of the above two cases. To every finitely generated relatively hyperbolic group, we associate two arrays. The first one is constructed in Section \ref{Subsection:first array} starting with a fine hyperbolic graph as in the case of a hyperbolic group above. The second array is a generalized version of the array $R$ for $G=H_1*H_2$. Hull-Osin's separating cosets play the same role as syllables in the normal form in the free product case. One technical remark is that the condition `$\|r_i(a)\|_2 \ge 1\; \forall a\in H_i \setminus\{1\}$' in the free product case is used to prevent the set $\{g\in G \mid \|R(g)\|=0\}$ from becoming infinite, but in the proof of Proposition \ref{Prop:main result}, we don't need this condition thanks to the first array.

\subsection{First array}\label{Subsection:first array}

This section is basically a continuation of Section \ref{Section:MY}. Notation and terminology related to graphs follow those in Section \ref{Section:MY}. The goal of this section is to prove Proposition \ref{Prop:first array} by using Mineyev-Yaman's bicombing.

\begin{prop}\label{Prop:first array}
Suppose that $G$ is a finitely generated group hyperbolic relative to a collection of subgroups $\{H_\lambda\}_{\lambda\in\Lambda}$ of $G$. Then, there exist a finite generating set $X$ of $G$, a 2-vertex-connected fine hyperbolic graph $Y$ on which $G$ acts without inversion of edges, and an array $Q\colon  G \to \ell^2(Y)$ into $(\ell^2(Y),\pi)$ that satisfy the following conditions.
\begin{itemize}
    \item[(1)]
    The edge stabilizer is trivial for any edge in $E(Y)$.
    \item[(2)]
    For any $g\in G$, we have
    \begin{equation}\label{Eq:first array}
        d_{\widehat{\Gamma}}(1,g)\le \frac{1}{2}\|Q(g)\|_2^2,
    \end{equation}
where $\widehat{\Gamma}$ is the coned-off Cayley graph of $G$ with respect to $X$.
\end{itemize}
\end{prop}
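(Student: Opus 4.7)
Following the hyperbolic case of Subsection~\ref{Subsection:overview}, the plan is to apply Theorem~\ref{Thm:bicombing} to a barycentric subdivision $Y$ of the coned-off Cayley graph $\widehat{\Gamma}$ and then convert the resulting $\QQ$-bicombing into an $\ell^2$-valued array via the signed square-root (``flattening'') map from $\ell^1(Y)$ to $\ell^2(Y)$.

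\textbf{Choice of $X$ and $Y$.} Pick a symmetric finite generating set $X_0 \ni 1$ of $G$ and set $X = X_0^2$. With this choice, $\widehat{\Gamma}$ has no loops or multiple edges and is 2-vertex-connected: group-element vertices are detourable because any two neighbors of a vertex $w$ in $\Gamma(G, X_0)$ are joined by a single edge in $\Gamma(G, X_0^2) = \Gamma(G, X)$, and each coset vertex $xH_\lambda$ has at least two group-element neighbors provided every peripheral subgroup is nontrivial (which we may arrange by discarding any trivial factor). Fineness and hyperbolicity of $\widehat{\Gamma}$ follow from Definition~\ref{Def:rel hyp gp}. Let $Y$ be the barycentric subdivision of $\widehat{\Gamma}$: it remains 2-vertex-connected, fine, and hyperbolic, the action of $G$ has no edge inversion, and all edge stabilizers are trivial, because every subdivided edge has one original endpoint and one midpoint endpoint, and each original edge of $\widehat{\Gamma}$ has at least one endpoint $g \in G$ with trivial $G$-stabilizer (the only remaining case, a coning edge $(g, xH_\lambda)$, has $G_g \cap G_{xH_\lambda} = \{1\}$). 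Theorem~\ref{Thm:bicombing} then produces a $G$-equivariant antisymmetric $\QQ$-bicombing $q$ on $Y$ with bounded-area constant $T$.

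\textbf{Defining $Q$ and verifying the array axioms.} Set $Q(g) = \widetilde{q[1, g]}$, where $\widetilde{\sum c_e [e]} := \sum \mathrm{sgn}(c_e)\sqrt{|c_e|}\,[e]$. This flattening satisfies $\|\widetilde{\xi}\|_2^2 = \|\xi\|_1$, commutes with every $\pi_g$, and obeys $\|\widetilde{\xi} - \widetilde{\eta}\|_2^2 \le 2\|\xi - \eta\|_1$ by a coefficient-wise estimate. Axiom (1) of Definition~\ref{Def:array} follows from $G$-equivariance and antisymmetry of $q$:
\[
\pi_g Q(g^{-1}) = \widetilde{\pi_g q[1, g^{-1}]} = \widetilde{q[g, 1]} = -\widetilde{q[1, g]} = -Q(g).
\]
For axiom (2), the three-cycle estimate of Theorem~\ref{Thm:bicombing}(3) gives $\|q[1, gh] - q[g, gh] - q[1, g]\|_1 \le T$, so Theorem~\ref{Thm:bicombing}(4) yields
\[
\|Q(gh) - \pi_g Q(h)\|_2^2 \le 2\|q[1, gh] - q[g, gh]\|_1 \le 2\bigl(T + M' d_Y(1, g) + N'\bigr),
\]
which depends only on $g$.

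\textbf{Lower bound and main obstacle.} For inequality (\ref{Eq:first array}) I would establish $\|q[1, g]\|_1 \ge d_Y(1, g)$ by a min-cut argument: since $\partial q[1, g] = g - 1$, for each $k \in \{1, \ldots, d_Y(1, g)\}$ the set of edges in $Y$ running between the $(k{-}1)$-sphere and the $k$-sphere around $1$ separates $1$ from $g$, so the signed flow of $q[1, g]$ across this cut is $\pm 1$, contributing at least $1$ to $\|q[1, g]\|_1$; these cuts are pairwise disjoint, giving the bound. Since $Y$ is the barycentric subdivision of $\widehat{\Gamma}$ we have $d_Y(1, g) = 2 d_{\widehat{\Gamma}}(1, g)$, so
\[
\|Q(g)\|_2^2 = \|q[1, g]\|_1 \ge 2 d_{\widehat{\Gamma}}(1, g).
\]
The principal technical hurdles I anticipate are the careful checks of 2-vertex-connectivity and edge-stabilizer triviality in the presence of coset vertices, and the verification that the flattening map transports the three-cycle estimate with a controllable constant; everything else is a direct packaging of Mineyev-Yaman's theorem.
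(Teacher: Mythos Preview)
Your proposal is correct and follows the same overall architecture as the paper: choose $X=X_0^2$ to make $\Gamma(G,X)$ (hence $\widehat{\Gamma}$) 2-vertex-connected, pass to the barycentric subdivision $Y$, apply Theorem~\ref{Thm:bicombing}, and set $Q(g)=\widetilde{q[1,g]}$ with the signed-square-root flattening. The verification of the array axioms is the same as in the paper (the paper simply bounds by $2(T+\|q[1,g]\|_1)$ rather than invoking condition~(4) of Theorem~\ref{Thm:bicombing}, but either works).

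The one genuine difference is the lower bound $\|q[1,g]\|_1\ge d_Y(1,g)$. The paper relies on Remark~\ref{Rem:convex}, i.e.\ the extra structural fact (extracted from the explicit Mineyev--Yaman construction) that $q[1,g]$ is a \emph{convex combination of paths}, and then applies Lemma~\ref{Lem:linear combination of paths}. Your min-cut argument instead uses only the bicombing condition $\partial q[1,g]=g-1$: pairing with the $0$-cochain $f_k=\mathbf{1}_{\{d_Y(1,\cdot)\ge k\}}$ gives $1=\langle f_k,\partial q[1,g]\rangle=\langle\delta f_k,q[1,g]\rangle$, and since $\delta f_k$ is supported on the edges between the $(k{-}1)$- and $k$-spheres with values in $\{-1,0,1\}$, this forces the $\ell^1$-mass of $q[1,g]$ on that edge set to be at least $1$; summing over $k=1,\dots,d_Y(1,g)$ yields the bound. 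This is a cleaner and strictly more general argument than the paper's, since it applies to any homological bicombing without needing to inspect the Mineyev--Yaman construction. The paper's route, by contrast, isolates a reusable combinatorial lemma (Lemma~\ref{Lem:linear combination of paths}) about linear combinations of paths, at the cost of importing Remark~\ref{Rem:convex}.
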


\begin{rem}\label{Rem:weak containment}
The unitary representation $(\ell^2(Y),\pi)$ in Proposition \ref{Prop:first array} is weakly contained by the left regular representation $(\ell^2(G),\lambda_G)$. Indeed, since $G$ acts on $Y$ without inversion of edges and all edge stabilizers are trivial, $(\ell^2(Y),\pi)$ is a direct sum of copies of $(\ell^2(G),\lambda_G)$.
\end{rem}

We first prove a few general lemmas about graphs. Lemma \ref{Lem:linear combination of paths} can be proven for graphs with loops and multiple edges as well exactly in the same way, but we stick to our current setting.

\begin{lem}\label{Lem:linear combination of paths}
Suppose that $Y$ is a connected graph without loops or multiple edges. If $a,b \in V(Y)$ are two vertices, $(p_j)_{j=1}^N \subset C_1(Y)$ are paths from $a$ to $b$ as 1-chains, and $(\alpha_j)_{j=1}^N\subset\CC$ are complex numbers, then we have
$$\left|\sum_{j=1}^N \alpha_j\right| \cdot d_Y(a,b)
\le
\left\|\sum_{j=1}^N \alpha_j p_j\right\|_1.$$
\end{lem}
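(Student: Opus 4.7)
The plan is to prove the inequality by pairing the 1-chain $\xi=\sum_{j=1}^N\alpha_j p_j$ against a suitable $1$-Lipschitz ``height function'' on the vertices. The key observation is that, since each $p_j$ goes from $a$ to $b$, its boundary is $\partial p_j=b-a$, so
\[\partial\xi=\Bigl(\sum_{j=1}^N\alpha_j\Bigr)(b-a).\]
Thus all of the information we need about $\sum_j\alpha_j$ and $a,b$ is already encoded in $\partial\xi$, and the remaining task is to estimate the pairing of $\partial\xi$ with an appropriate linear functional.

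For this, I would set $\phi\colon V(Y)\to\mathbb{R}$, $\phi(v)=d_Y(v,a)$, which is $1$-Lipschitz with respect to the graph metric. Fixing a set of positive edges $E^+(Y)$ and writing $\xi=\sum_{e\in E^+(Y)}c_e[u_e,v_e]$ with only finitely many nonzero $c_e$ (this is the defining property of $C_1(Y)$, ensuring all sums below are finite), define the pairing
\[\langle\phi,\partial\xi\rangle=\sum_{e\in E^+(Y)}c_e\bigl(\phi(v_e)-\phi(u_e)\bigr).\]
Since $u_e$ and $v_e$ are adjacent vertices in $Y$, we have $|\phi(v_e)-\phi(u_e)|\le d_Y(u_e,v_e)=1$, so the triangle inequality yields
\[|\langle\phi,\partial\xi\rangle|\le \sum_{e\in E^+(Y)}|c_e|=\|\xi\|_1.\]

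On the other hand, substituting $\partial\xi=(\sum_j\alpha_j)(b-a)$ gives
\[\langle\phi,\partial\xi\rangle=\Bigl(\sum_{j=1}^N\alpha_j\Bigr)\bigl(\phi(b)-\phi(a)\bigr)=\Bigl(\sum_{j=1}^N\alpha_j\Bigr)d_Y(a,b),\]
and combining the two displays yields the claimed inequality. I don't anticipate any real obstacle: the only mild subtlety is to check that the pairing is well-defined, i.e.\ that the coefficient $c_e$ of $\xi$ with respect to an edge $e$ is unambiguous modulo the relation $[u,v]=-[v,u]$; this is handled by fixing the orientation $E^+(Y)$ once and for all, and the resulting estimate is clearly independent of that choice since both sides only depend on the absolute values $|c_e|$ and on $\partial\xi$.
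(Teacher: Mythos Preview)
Your argument is correct and noticeably cleaner than the paper's. The paper proceeds combinatorially: it stratifies $E(Y)$ by the level sets $D_n=\{e:\{d_Y(o(e),a),d_Y(t(e),a)\}=\{n-1,n\}\}$ of the distance function from $a$, chooses positive edges pointing away from $a$, and then checks by hand that for each path $p_j$ and each $1\le n\le d_Y(a,b)$ the sum of coefficients of $Pr_{D_n^+}(p_j)$ equals $1$ (because the path must cross level $n$ an odd number of times, alternating $+1,-1,+1,\dots$). This yields $\|Pr_{D_n^+}(\xi)\|_1\ge|\sum_j\alpha_j|$ for each of the $d_Y(a,b)$ levels, and summing these disjoint contributions gives the inequality.

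Your approach replaces this level-by-level bookkeeping with a single duality step: pair $\xi$ against the $\ell^\infty$-cochain $\delta\phi$, where $\phi(v)=d_Y(v,a)$. The two arguments are secretly the same computation---your $\phi(v_e)-\phi(u_e)$ is exactly $\pm 1$ on $D_n$ and $0$ on the ``horizontal'' edges $C$, so your pairing collapses to the paper's sum $\sum_n\psi(Pr_{D_n^+}(\xi))$---but by recognizing this as $\langle\phi,\partial\xi\rangle$ and using $\partial p_j=b-a$ you bypass the parity analysis entirely. What the paper's version buys is a slightly sharper statement in principle (each level already contributes $|\sum_j\alpha_j|$ to the $\ell^1$-norm), but that refinement is never used; for the lemma as stated, your argument is the right one.
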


\begin{proof}
We have $|d_Y(o(e),a)-d_Y(t(e),a)| \leq d_Y(o(e),t(e)) \leq 1$ for any $e=(u,v) \in E(Y)$, where we define $o(e)=u$ and $t(e)=v$. Hence, by defining 
\[C= \{ e\in E(Y)\mid d_Y(o(e),a)=d_Y(t(e),a) \}\]
and
    \begin{align*}
          D_n=\{&e\in E(Y)\mid d_Y(o(e),a)=n-1 \wedge d_Y(t(e),a)=n\} \\
          &\cup\{e\in E(Y)\mid d_Y(t(e),a)=n-1 \wedge d_Y(o(e),a)=n\}
    \end{align*}
 for each $n \in \NN$,
we get the decomposition of edges $E(Y)=C \sqcup \left( \bigsqcup_{n \in \NN} D_n \right)$.
Also, we can choose directions of edges so that they point outward from $a$, that is, there exists a set of positive edges $E^+(Y)$ such that
\[ D_n \cap E^+(Y) = \{e\in E(Y)\mid d_Y(o(e),a)=n-1\wedge d_Y(t(e),a)=n\} \] and
\[ D_n \cap E^-(Y) = \{e\in E(Y)\mid d_Y(t(e),a)=n-1\wedge d_Y(o(e),a)=n\} \]
for all $ n\in \NN$, where $E^-(Y)=\overline{E^+(Y)}$. For simplicity, we use the notation $C^+= C \cap E^+(Y)$, $D_n^+= D_n \cap E^+(Y)$, and $D_n^-= D_n \cap E^-(Y)$. Given a 1-chain $\xi = \sum_{e\in E^+(Y)} c_e [e]\in C_1(Y)$, we define linear maps
$Pr_{C^+},Pr_{D_n^+}\colon C_1(Y)\to C_1(Y)\;(n\in\NN)$ by
\[ Pr_{C^+}(\xi) = \sum_{e\in C^+} c_e [e]\;\;\;{\rm and}\;\;\;
Pr_{D_n^+}(\xi) = \sum_{e\in D_n^+} c_e [e]. \]
By $E^+(Y)=C^+ \sqcup \left( \bigsqcup_{n \in \NN} D^+_n \right)$, we have for any $\xi\in C_1(Y)$,
\[\xi= Pr_{C^+}(\xi) + \sum_{n\in \NN} Pr_{D_n^+}(\xi)\;\;\;{\rm and}\;\;\; 
\|\xi\|_1 = \|Pr_{C^+}(\xi)\|_1 + \sum_{n\in \NN}\|Pr_{D_n^+}(\xi)\|_1.\]
In particular, we have $\left\|\sum_{j=1}^N \alpha_j p_j\right\|_1 = \left\|Pr_{C^+}(\sum_{j=1}^N \alpha_j p_j)\right\|_1 + \sum_{n\in \NN}\left\|Pr_{D_n^+}(\sum_{j=1}^N \alpha_j p_j)\right\|_1$.\par
We will show that, for any $n \in \NN$ with $1\le n \le d_Y(a,b)$, we have
\[\Bigg|\sum_{j=1}^N \alpha_j\Bigg|
\le
\Bigg\|Pr_{D_n^+}(\sum_{j=1}^N \alpha_j p_j)\Bigg\|_1.\]
By identifying $\ell^0(Y)$ with $\CC^{E^+(Y)}$ as explained in Section \ref{Section:MY}, we define a linear map $\psi\colon C_1(Y) \to \CC$ by $\psi(\sum_{e\in E^+(Y)} c_e [e]) = \sum_{e\in E^+(Y)} c_e$. Note that the sums are finite and $\psi$ depends on the chosen set $E^+(Y)$ of positive edges. For any $\xi = \sum_{e\in E^+(Y)} c_e [e]\in C_1(Y)$, we have 
\[|\psi(\xi)| = \left|\sum_{e\in E^+(Y)} c_e\right| \le \sum_{e\in E^+(Y)} |c_e| = \|\xi\|_1.\]
Here, for any path $p_j$ from $a$ to $b$ and any $n \in \NN$ with $1\le n \le d_Y(a,b)$, we have
\[\psi(Pr_{D_n^+} (p_j))=1.\] 
Indeed, let $p_j = (e_1,\cdots,e_k)$ as a sequence of edges, where $o(e_1)=a$ and $t(e_k)=b$, and $p_j \cap D_n = (e_{i_1},\cdots,e_{i_m})$ as its subsequence by abuse of notation, i.e. $p_j=\sum_{l=1}^k [e_l]$ and $Pr_{D_n^+} (p_j) = \sum_{l=1}^m [e_{i_l}]$ as a 1-chain. We can see that $m$ is odd, i.e. $m=2l-1$ with $l\in\NN$ and also $e_{i_1} \in D_n^+,e_{i_2} \in D_n^-,e_{i_3} \in D_n^+, e_{i_4} \in D_n^-,\cdots,e_{i_{2l-1}} \in D_n^+$. This implies
\begin{align*}
    \psi(Pr_{D_n^+} p_j) & = \psi([e_{i_1}]) + \psi([e_{i_2}]) + \psi([e_{i_3}]) + \psi([e_{i_4}])+ \cdots + \psi([e_{i_{2l-1}]}) \\
   & = 1 + (-1) + 1 + (-1) + \cdots + 1 \\
   & =1.
\end{align*}
Hence,
\begin{align*}
    \Bigg\|Pr_{D_n^+}(\sum_{j=1}^N \alpha_j p_j)\Bigg\|_1 &\ge \Bigg|\psi(Pr_{D_n^+}(\sum_{j=1}^N \alpha_j p_j))\Bigg|
=\Bigg|\psi(\sum_{j=1}^N \alpha_j Pr_{D_n^+}(p_j))\Bigg|
=\Bigg|\sum_{j=1}^N \alpha_j \psi(Pr_{D_n^+}(p_j))\Bigg| \\
&=\Bigg|\sum_{j=1}^N \alpha_j\Bigg|.
\end{align*}
Thus, we finally get
\begin{align*}
    \Bigg\|\sum_{j=1}^N \alpha_j p_j\Bigg\|_1 
    &= \Bigg\|Pr_{C^+}(\sum_{j=1}^N \alpha_j p_j)\Bigg\|_1 + \sum_{n\in \NN}\Bigg\|Pr_{D_n^+}(\sum_{j=1}^N \alpha_j p_j)\Bigg\|_1 \\
    &\ge 0 + \sum_{n=1}^{d_Y(a,b)} \Bigg|\sum_{j=1}^N \alpha_j\Bigg|= \Bigg|\sum_{j=1}^N \alpha_j\Bigg|\cdot d_Y(a,b).
\end{align*}
\end{proof}

\begin{defn}\label{Def:modify}
Suppose that $Y$ is a graph without loops or multiple edges and $E^+(Y)$ is a set of positive edges. For $\xi = \sum_{e \in E^+(Y)} c_e [e] \in\ell^1(Y)$ such that $c_e\in\RR$ for any $e\in E^+(Y)$, we define an element $\widetilde{\xi} = \sum_{e \in E^+(Y)} b_e [e]\in \ell^2(Y)$ by
\[b_e =
\begin{cases}
\sqrt{c_e} & \mathrm{if}\;\; c_e \ge 0 \\
-\sqrt{|c_e|} & \mathrm{if}\;\; c_e < 0.
\end{cases}\]
\end{defn}

Note that the above definition is independent of the choice of a set of positive edges.

\begin{lem}\label{Lem:l1 to l2}
For any  $\xi_1,\xi_2 \in\ell^1(Y)$ whose coefficients are real, the following hold.
\begin{itemize}
    \item[(1)]
    $\|\widetilde{\xi_1}\|_2^2 = \|\xi_1\|_1$.
    \item[(2)]
    $\|\widetilde{\xi_1} - \widetilde{\xi_2}\|_2^2 \le 2\|\xi_1 - \xi_2\|_1$.
\end{itemize}
\end{lem}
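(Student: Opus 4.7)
The plan is as follows. Part (1) is immediate from the definition: writing $\xi_1 = \sum_{e \in E^+(Y)} c_e[e]$ with $c_e \in \RR$, the coefficients of $\widetilde{\xi_1}$ are $b_e = \mathrm{sgn}(c_e)\sqrt{|c_e|}$, so $|b_e|^2 = |c_e|$, and summing over $e \in E^+(Y)$ gives $\|\widetilde{\xi_1}\|_2^2 = \sum_e |c_e| = \|\xi_1\|_1$.

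For part (2), my plan is to reduce to a pointwise inequality on each edge. Writing $\xi_1 = \sum_e c_e[e]$ and $\xi_2 = \sum_e d_e[e]$ with $c_e, d_e \in \RR$, we have $\|\xi_1 - \xi_2\|_1 = \sum_e |c_e - d_e|$ and
\[\|\widetilde{\xi_1} - \widetilde{\xi_2}\|_2^2 = \sum_{e \in E^+(Y)} \bigl(\mathrm{sgn}(c_e)\sqrt{|c_e|} - \mathrm{sgn}(d_e)\sqrt{|d_e|}\bigr)^2.\]
Hence it suffices to establish the scalar inequality $\bigl(\mathrm{sgn}(a)\sqrt{|a|} - \mathrm{sgn}(b)\sqrt{|b|}\bigr)^2 \le 2|a - b|$ for all $a, b \in \RR$, and then sum termwise.

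To verify this scalar inequality, I would substitute $u = \mathrm{sgn}(a)\sqrt{|a|}$ and $v = \mathrm{sgn}(b)\sqrt{|b|}$, so that $a = u|u|$ and $b = v|v|$; the claim then becomes $(u - v)^2 \le 2\bigl| u|u| - v|v| \bigr|$. Split into two cases by signs. If $u$ and $v$ have the same sign (including the case where one of them is $0$), then $\bigl| u|u| - v|v| \bigr| = |u^2 - v^2| = |u - v|(|u| + |v|) \ge (u - v)^2$, since in this case $|u| + |v| \ge |u - v|$; this already yields the stronger bound with constant $1$ instead of $2$. If $u$ and $v$ have strictly opposite signs, then $\bigl| u|u| - v|v| \bigr| = u^2 + v^2$, and $(u - v)^2 = u^2 + v^2 - 2uv \le 2(u^2 + v^2)$ follows from the AM--GM bound $-2uv \le u^2 + v^2$.

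The lemma is essentially a direct calculation, so there is no substantial obstacle; the only thing to be careful about is the sign analysis, where the factor of $2$ on the right-hand side of (2) is necessary precisely to accommodate the case of opposite signs. In the same-sign case the sharper bound $\|\widetilde{\xi_1} - \widetilde{\xi_2}\|_2^2 \le \|\xi_1 - \xi_2\|_1$ in fact holds coefficient by coefficient, but this refinement is not needed for the later applications.
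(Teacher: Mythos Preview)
Your proof is correct and follows essentially the same approach as the paper: both reduce part (2) to a pointwise scalar inequality on each edge coefficient and verify it by splitting into the same-sign case (where the constant $1$ suffices) and the opposite-sign case (where the factor $2$ is needed). Your substitution $u = \mathrm{sgn}(a)\sqrt{|a|}$, $v = \mathrm{sgn}(b)\sqrt{|b|}$ is a minor notational variation on the paper's direct computation with $\sqrt{c_1}, \sqrt{c_2}$, but the content is the same.
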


\begin{proof}
(1) follows trivially from Definition \ref{Def:modify}. We will prove (2). For any $c_1,c_2\in\RR_{\ge0}$, we have \[|\sqrt{c_1}-\sqrt{c_2}|^2\le|\sqrt{c_1}-\sqrt{c_2}|(\sqrt{c_1} +\sqrt{c_2})=|c_1-c_2|,
\]
\[|\sqrt{c_1}+\sqrt{c_2}|^2 = 2(c_1+c_2)-(\sqrt{c_1}-\sqrt{c_2})^2 \le 2|c_1+c_2|.
\] 
Therefore, given
\[\xi_1 = \sum_{e \in E^+(Y)} c_{1e}[e],\;
\xi_2 = \sum_{e \in E^+(Y)} c_{2e}[e],\;
\widetilde{\xi_1} = \sum_{e \in E^+(Y)} b_{1e}[e],\;
\widetilde{\xi_2} = \sum_{e \in E^+(Y)} b_{2e}[e],\]
we have
$|b_{1e} - b_{2e}|^2 \le 2|c_{1e}-c_{2e}|$ for any $e \in E^+(Y)$.
\end{proof}

\begin{lem}\label{Lem:2-vertex-connected}
If $G$ is a group and $X_0$ is a generating set of $G$ such that $X_0=X_0^{-1}$ and $1\in X_0$, then the Cayley graph $\Gamma(G,X_0^2)$ is 2-vertex-connected, where
\[X_0^2=\{gh\in G \mid g,h\in X_0\}.\]
\end{lem}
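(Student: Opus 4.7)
The plan is to verify the two defining properties: connectedness of $\Gamma(G,X_0^2)$, and connectedness of $\Gamma(G,X_0^2)\setminus\{v\}$ for every vertex $v\in V(\Gamma(G,X_0^2))=G$. The crucial structural fact that drives everything is that $1\in X_0$ forces $X_0=X_0\cdot\{1\}\subseteq X_0\cdot X_0=X_0^2$, so that $\Gamma(G,X_0)$ is a spanning subgraph of $\Gamma(G,X_0^2)$. Since $X_0$ generates $G$, this immediately gives connectedness.

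For the vertex-removal property, I would first reduce to the case $v=1$ by observing that left multiplication by $v^{-1}$ is a graph automorphism of $\Gamma(G,X_0^2)$ sending $v$ to $1$. Thus I must show that any two vertices $a,b\in G\setminus\{1\}$ can be joined by a walk in $\Gamma(G,X_0^2)$ avoiding $1$. First I take any path $a=g_0,g_1,\ldots,g_n=b$ in the subgraph $\Gamma(G,X_0)$; since $a,b\neq 1$, any index $k$ with $g_k=1$ must satisfy $0<k<n$.

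The core local move is the following. Suppose $g_k=1$. From the edge $(g_{k-1},g_k)\in E(\Gamma(G,X_0))$ and $X_0=X_0^{-1}$, we get $g_{k-1}\in X_0\setminus\{1\}$; similarly $g_{k+1}\in X_0\setminus\{1\}$. If $g_{k-1}\neq g_{k+1}$, then $g_{k-1}^{-1}g_{k+1}\in X_0\cdot X_0\setminus\{1\}=X_0^2\setminus\{1\}$, so there is a direct $\Gamma(G,X_0^2)$-edge bypassing $g_k$, and we delete $g_k$ from the walk. If $g_{k-1}=g_{k+1}$, we simply delete $g_k$ and $g_{k+1}$, noting that the subsequent edge (if any) from $g_{k+1}$ to $g_{k+2}$ already lies in $\Gamma(G,X_0)\subseteq\Gamma(G,X_0^2)$ and now emanates from $g_{k-1}=g_{k+1}$. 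Iterating this move strictly decreases the number of occurrences of $1$ in the walk, so after finitely many steps no occurrence remains.

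The proof is essentially a direct unwinding of the definitions, and I do not expect a real obstacle; the only point that requires a little care is the boundary behavior when $k\pm 1$ reaches the endpoints of the walk, which is handled by the restriction $0<k<n$ above and by the observation that the splicing rules still produce a valid walk when the segment being removed is at (or adjacent to) the end.
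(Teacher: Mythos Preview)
Your proof is correct and follows essentially the same strategy as the paper: reduce to removing the vertex $1$ by vertex-transitivity, then bypass $1$ using that any two distinct elements of $X_0\setminus\{1\}$ are joined by a single $X_0^2$-edge. The one minor difference is that the paper takes its initial path in $\Gamma(G,X_0^2)$ and therefore must also argue that every neighbor $gh\in X_0^2\setminus\{1\}$ of $1$ can be pushed (via the $X_0$-edge to $g$) into $X_0\setminus\{1\}$, obtaining a bypass of length at most $3$; by choosing your initial path in the spanning subgraph $\Gamma(G,X_0)$ you avoid this extra step and get a single-edge bypass directly.
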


\begin{proof}
Note that $X_0^2$ generates $G$ since $X_0 \subset X_0^2$. Since $G$ acts transitively on $V(\Gamma(G,X_0^2))$ by graph automorphisms, it's enough to prove that $\Gamma(G,X_0^2) \setminus \{1\}$ is connected. Any two vertices $g,h\in X_0 \setminus\{1\}$ are connected in $\Gamma(G,X_0^2) \setminus \{1\}$ by an edge having label $g^{-1}h\in X_0^2$. Also, for any vertex $gh \in X_0^2 \setminus\{1\}$, if $g\neq 1$, then $gh$ is connected to a vertex $g$ in $\Gamma(G,X_0^2) \setminus \{1\}$ by an edge of label $h^{-1}\in X_0$. Hence, any two vertices adjacent to $1$ are connected in $\Gamma(G,X_0^2) \setminus \{1\}$ by a path of length at most 3. This implies that $\Gamma(G,X_0^2) \setminus \{1\}$ is connected.
\end{proof}

In the following proof, recall that our coned-off Cayley graph is a graph without loops or multiple edges (see Definition \ref{coned off cayley graph}).

\begin{proof}[Proof of Proposition \ref{Prop:first array}]
We take a symmetric finite generating set $X_0$ of $G$ with $1\in X_0$, and define $X=X_0^2$. The Cayley graph $\Gamma(G,X)$ is 2-vertex-connected by Lemma \ref{Lem:2-vertex-connected}. Without loss of generality, we can assume that the subgroups $H_\lambda$ are non-trivial. The coned-off Cayley graph $\widehat{\Gamma}$ with respect to $X$ is 2-vertex-connected, because $\Gamma(G,X)$ is 2-vertex-connected and $\{H_\lambda\}_{\lambda\in\Lambda}$ doesn't contain the trivial subgroup. We denote by $Y$ the barycentric subdivision of $\widehat{\Gamma}$. $G$ acts on $Y$ without inversion of edges. Since $\widehat{\Gamma}$ is a 2-vertex-connected fine hyperbolic graph, so is $Y$. Here, we used the fact that $\widehat{\Gamma}$ doesn't have loops to ensure $Y$ is 2-vertex-connected. Also, since the number of $G$-orbits of $E(\widehat{\Gamma})$ is finite and the edge stabilizer is trivial for any edge in $E(\widehat{\Gamma})$, the action of $G$ on $Y$ satisfies these conditions as well. Therefore, by Theorem \ref{Thm:bicombing}, there exist a $G$-equivariant anti-symmetric $\QQ$-bicombing $q$ of $Y$ and a constant $T\ge 0$ such that for any $a,b,c\in V(Y)$, we have
\[\|q[a,b]+q[b,c]+q[c,a]\|_1 \le T.
\]
We define a map $Q\colon  G\to \ell^2(Y)$ by
\[Q(g)=\widetilde{q[1,g]}.
\]
For every $g\in G$, we have
 \[\pi_gQ(g^{-1})=\widetilde{q[g,1]}=-\widetilde{q[1,g]}=-Q(g),\]
because $q$ is $G$-equivariant and anti-symmetric.
Given any $g,h\in G$, Lemma \ref{Lem:l1 to l2} (2) implies
\begin{align*}
    \|Q(gh)-\pi_g(Q(h))\|_2^2
&=\|\widetilde{q[1,gh]}-\widetilde{q[g,gh]}\|_2^2
\le 2\|q[1,gh]-q[g,gh]\|_1 \\
&\le 2(\|q[1,gh]+q[gh,g]+q[g,1]\|_1+\|q[1,g]\|_1) \\
&\le 2(T+\|q[1,g]\|_1).
\end{align*}
Hence, $Q$ is an array into $(\ell^2(Y),\pi)$. 
By Remark \ref{Rem:convex}, for any $g\in G$, the 1-chain $q[1,g]$ is a convex combination of paths from $1$ to $g$ i.e. there exist paths $p_1,\cdots,p_N$ from $1$ to $g$ and $\alpha_1,\cdots,\alpha_N\in\QQ_{\ge0}$ with $\sum_{j= 1}^N \alpha_j=1$ such that
\[q[1,g]=\sum_{j= 1}^N \alpha_jp_j.
\]
Hence, by Lemma \ref{Lem:linear combination of paths} and Lemma \ref{Lem:l1 to l2} (1), we have
\[d_Y(1,g)=\left|\sum_{j=1}^N \alpha_j\right| \cdot d_Y(1,g)\le 
\left\|\sum_{j=1}^N \alpha_jp_j\right\|_1=\|q[1,g]\|_1=\|Q(g)\|_2^2.
\]
Since $Y$ is a barycentric subdivision of $\widehat{\Gamma}$, we have for any $g\in G$,
\[2d_{\widehat{\Gamma}}(1,g)=d_Y(1,g),\]
hence
\[
d_{\widehat{\Gamma}}(1,g)=\frac{1}{2}d_Y(1,g)\le
\frac{1}{2}\|Q(g)\|_2^2.
\]
\end{proof}

\begin{rem}\label{Rem:not proper}
Note that $Q$ is not even proper relative to $\{H_\lambda\}_{\lambda\in\Lambda}$. For example, if $G$ is a free product of infinite groups $H_1$ and $H_2$, then by Theorem \ref{Thm:bicombing} (4), we can show that there exists some $N''\in\NN$ such that 
$\{h_1h_2 \mid h_1\in H_1,\;h_2\in H_2\}\subset\{g\in G\mid\|Q(g)\|_2\le N''\}$.
\end{rem}

\subsection{Second array}\label{Subsection:second array}
This section is a continuation of Section \ref{Subsection:HO}. The goal of this section is to prove Proposition \ref{Prop:second array}. We will construct an array on $G$ from an array on a subgroup $H_\mu$ which is a member of a hyperbolicaly embedded collection of subgroups $\{H_\lambda\}_{\lambda\in\Lambda}$. The construction follows Section 4 of \cite{HO} and uses the notion of separating cosets explained in Section \ref{Subsection:HO}.

\begin{prop}\label{Prop:second array}
Suppose that $G$ is a group, $X$ is a subset of $G$, and $\{H_\lambda\}_{\lambda\in\Lambda}$ is a collection of subgroups hyperbolically embedded in $(G,X)$. Then, for any $\mu\in\Lambda$ and any array $r$ on $H_\mu$ into $(\ell^2(H_\mu) ,\lambda_{H_\mu})$, there exists an array $R$ on $G$ into $(\ell^2(G), \lambda_G)$ and a constant $K_\mu\ge 0$ satisfying the following$\colon$  for any $g\in G$, any separating coset $xH_\mu \in S_\mu(1,g;D)$, and any geodesic path $p$ in the relative Cayley graph $\Gamma(X\cup \mathcal{H})$ from $1$ to $g$, we have
\begin{equation}\label{Eq:second array}
    \|r(p_{in}(xH_\mu)^{-1}p_{out}(xH_\mu))\|_2 \le
\|R(g)\|_2+K_\mu.
\end{equation}
\end{prop}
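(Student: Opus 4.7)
The plan is to imitate the free-product construction in Section~\ref{Subsection:overview}, using Hull--Osin separating cosets as analogues of syllables. First, fix in a $G$-equivariant and anti-symmetric way a choice $(u_A^{(f,g)}, v_A^{(f,g)}) \in E(f, g; A, D)$ for every triple $(f, g, A)$ with $A \in S_\mu(f, g; D)$; since each set $E(f, g; A, D)$ is finite (Lemma~\ref{Lem:3.8 of HO}(c)) and these sets transform equivariantly under $G$ and under the swap $(f, g) \leftrightarrow (g, f)$ by parts (a) and (b) of the same lemma, such a system of canonical pairs can be obtained by a standard orbit-selection argument. Writing $(u_A, v_A) := (u_A^{(1,g)}, v_A^{(1,g)})$ and letting $\iota \colon \ell^2(H_\mu) \hookrightarrow \ell^2(G)$ be the isometric inclusion by extension-by-zero, define
\[R(g) \;:=\; \sum_{A \in S_\mu(1, g; D)} \lambda_G(u_A)\, \iota\bigl(r(u_A^{-1} v_A)\bigr).\]
Each summand is supported on $u_A H_\mu = A$, so summands for distinct cosets are orthogonal and $\|R(g)\|_2^2 = \sum_{A \in S_\mu(1, g; D)} \|r(u_A^{-1} v_A)\|_2^2$; in particular $\|r(u_A^{-1} v_A)\|_2 \leq \|R(g)\|_2$ for every $A \in S_\mu(1, g; D)$.

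To verify inequality~(\ref{Eq:second array}), fix a geodesic $p$ from $1$ to $g$ and a separating coset $A \in S_\mu(1, g; D)$. Lemma~\ref{Lem:3.3 of HO}(b) yields $\widehat{d_\mu}(u_A, p_{in}(A)) \leq 3C$ and $\widehat{d_\mu}(v_A, p_{out}(A)) \leq 3C$, so $a := p_{in}(A)^{-1} u_A$ and $c := v_A^{-1} p_{out}(A)$ both lie in the finite set $\Omega := \{h \in H_\mu : \widehat{d_\mu}(1, h) \leq 3C\}$ (finite by Definition~\ref{Def:hyp emb}(2)). Condition~(2) for $r$ gives $K_1 := \max_{a \in \Omega} \sup_y \|r(ay) - \lambda_{H_\mu}(a) r(y)\|_2 < \infty$, and combining conditions~(1) and~(2) of $r$ yields the analogous right-shift bound $K_2 := \max_{c \in \Omega} \sup_y \|r(yc) - r(y)\|_2 < \infty$. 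Since $p_{in}(A)^{-1} p_{out}(A) = a (u_A^{-1} v_A) c$, an iterated triangle inequality for norms gives
\[\|r(p_{in}(A)^{-1} p_{out}(A))\|_2 \;\leq\; \|r(u_A^{-1} v_A)\|_2 + K_1 + K_2 \;\leq\; \|R(g)\|_2 + K,\]
with $K := K_1 + K_2$.

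Finally, one must show that $R$ is an array into $(\ell^2(G), \lambda_G)$. Condition~(1) for $R$, namely $\lambda_G(g) R(g^{-1}) = -R(g)$, reduces via the bijection $S_\mu(1, g^{-1}; D) \ni B \mapsto gB \in S_\mu(1, g; D)$ (Lemma~\ref{Lem:3.2 of HO}(b)) and the consistency of the canonical choice to the pointwise identity $\lambda_G(g u_B)\, \iota(r(u_B^{-1} v_B)) = -\lambda_G(u_A)\, \iota(r(u_A^{-1} v_A))$, which follows from condition~(1) of $r$ together with the $H_\mu$-equivariance of $\iota$. Condition~(2), namely $\sup_h \|R(gh) - \lambda_G(g) R(h)\|_2 < \infty$ for each fixed $g$, is the main obstacle. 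Decomposing $S_\mu(1, gh; D) = S' \sqcup S'' \sqcup F$ by Lemma~\ref{Lem:3.9 of HO} (choosing $S', S''$ maximal so that $|F| \leq 2$), the matching of entry-exit sets on $S''$ and $S'$ given by parts~(a), (b) of that lemma causes the $S''$-contributions to cancel exactly with the corresponding terms of $\lambda_G(g) R(h)$, while the $S'$-contributions form a sub-sum of $R(g)$ with norm at most $\|R(g)\|_2$. Applying Lemma~\ref{Lem:3.9 of HO} symmetrically to $S_\mu(g, gh; D)$ bounds the number of residual cosets by $|S_\mu(1, g; D)| + O(1)$, a quantity depending only on $g$. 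The central technical challenge is to bound each residual contribution uniformly in $h$: this requires comparing entry-exit pairs across different geodesic types (from $1 \to gh$, from $g \to gh$, and the two-geodesic path $1 \to g \to gh$) using Lemma~\ref{Lem:3.3 of HO} together with Proposition~\ref{Prop:2.4 of HO} applied to appropriate geodesic triangles in $\Gamma(G, X \cup \mathcal{H})$, and reducing the comparison to bounded-shift estimates for $r$ analogous to those used in the verification of~(\ref{Eq:second array}).
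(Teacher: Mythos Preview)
Your construction diverges from the paper's at the very first step, and that divergence breaks the argument for array condition~(2). The paper does not select a single pair from $E(f,g;A,D)$; it \emph{averages} over all of them, defining
\[
\widetilde{R}(f,g;A)=\frac{1}{|E(f,g;A,D)|}\sum_{(u,v)\in E(f,g;A,D)}\lambda_G(u)\,\iota\bigl(r(u^{-1}v)\bigr).
\]
The point of averaging is that $\widetilde{R}(f,g;A)$ then depends only on the \emph{set} $E(f,g;A,D)$. When you invoke Lemma~\ref{Lem:3.9 of HO} and use that $E(1,gh;A,D)=E(g,gh;A,D)$ for $A\in S''$, the paper gets $\widetilde{R}(1,gh;A)=\widetilde{R}(g,gh;A)$ on the nose, and the $S''$-terms cancel exactly. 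Your canonical pair $(u_A^{(f,g)},v_A^{(f,g)})$, however, is indexed by the \emph{triple} $(f,g,A)$, not by the set $E(f,g;A,D)$. Equality of the sets $E(1,gh;A,D)=E(g,gh;A,D)$ does not force $(u_A^{(1,gh)},v_A^{(1,gh)})=(u_A^{(g,gh)},v_A^{(g,gh)})$; your $G$-equivariance and swap-equivariance conditions relate $(1,gh,A)$ only to its $G\times\mathbb{Z}/2$-orbit, and $(g,gh,A)$ is not in that orbit. So your claimed exact cancellation on $S''$ (and symmetrically on $S'$) is unjustified, and without it the difference $R(gh)-\lambda_G(g)R(h)$ contains an unbounded-in-$h$ number of nonzero summands.

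A secondary issue: the ``standard orbit-selection argument'' is not as standard as you suggest. The $G\times\mathbb{Z}/2$-action on triples $(f,g,A)$ can have nontrivial stabilizers (take $h=fg^{-1}$ when $(g^{-1}f)^2=1$), and an equivariant section need not exist without further work. The paper's averaging sidesteps this entirely. If you want to salvage your approach, you would need to make the choice depend only on the set $E(f,g;A,D)$ in a way compatible with the $G$- and swap-actions on such sets, and check that this is actually possible; at that point you have essentially rediscovered why averaging is the natural construction.
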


The proof of Proposition \ref{Prop:second array} is essentially the same as the proof of Theorem 4.2 of \cite{HO}, but because we deal with arrays instead of quasi-cocycles, we give full details with all necessary changes to make the proof self-contained. \par

Suppose that $r\colon  H_\mu \to \ell^2(H_\mu)$ is an array on $H_\mu$ into the left regular representation $(\ell^2(H_\mu),\lambda_{H_\mu})$. By the embedding $\ell^2(H_\mu) \inj \ell^2(G)$, we can think of $r$ as a map $r\colon  H_\mu \to \ell^2(G)$. We define a map  $\widetilde{r}\colon G\times G\to \ell^2(G)$ by
\[\widetilde{r}(f,g)=
    \begin{cases}
    \lambda_G(f) r(f^{-1}g) & \mathrm{if}\;\; f^{-1}g\in H_\mu \\
    0 & \mathrm{if}\;\; f^{-1}g\notin H_\mu,
    \end{cases}
\]
where $(\ell^2(G),\lambda_G)$ is the left regular representation of $G$.

\begin{rem}
\label{support}
If $f,g\in G$ are in the same coset of $H_\mu$, i.e. there exists a $H_\mu$-coset $xH_\mu$ for some $x\in G$ such that $f,g\in xH_\mu$, then the support of $\widetilde{r}(f,g)$ is in $xH_\mu$.
\end{rem}

\begin{lem}\label{Lem:basic property}
For any $f,g,h\in G$, the following hold.
\begin{itemize}
    \item[(1)] 
    $\widetilde{r}(g,f)=-\widetilde{r}(f,g)$.
    \item[(2)] 
    $\widetilde{r}(hf,hg)=\lambda_G(h)\widetilde{r}(f,g)$.
\end{itemize}
\end{lem}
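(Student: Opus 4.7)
The plan is to verify both assertions by direct unwinding of the definition of $\widetilde{r}$, invoking the single array axiom (1) of Definition \ref{Def:array} for $r$ only in part (1). The very first observation is that the hypothesis $f^{-1}g\in H_\mu$ is preserved by both operations under consideration: the swap $(f,g)\mapsto(g,f)$ yields $g^{-1}f=(f^{-1}g)^{-1}$, which lies in $H_\mu$ iff $f^{-1}g$ does, and the left translation $(f,g)\mapsto(hf,hg)$ yields $(hf)^{-1}(hg)=f^{-1}g$ exactly. Thus in each identity the two sides are simultaneously zero when $f^{-1}g\notin H_\mu$, and it is enough to treat the case $f^{-1}g\in H_\mu$.

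For part (2), which I would do first because it is purely formal, the computation is immediate from the fact that $\lambda_G$ is a homomorphism: under $f^{-1}g\in H_\mu$,
\[\widetilde{r}(hf,hg)=\lambda_G(hf)\,r\bigl((hf)^{-1}(hg)\bigr)=\lambda_G(h)\lambda_G(f)\,r(f^{-1}g)=\lambda_G(h)\,\widetilde{r}(f,g).\]

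For part (1), I would apply the array antisymmetry axiom Definition \ref{Def:array}(1) to $r$ on $H_\mu$ with the element $k:=f^{-1}g\in H_\mu$, obtaining $\lambda_{H_\mu}(k)\,r(k^{-1})=-r(k)$; equivalently $r(g^{-1}f)=-\lambda_{H_\mu}(g^{-1}f)\,r(f^{-1}g)$. Plugging this into the definition of $\widetilde{r}(g,f)$ and using that the embedding $\ell^2(H_\mu)\hookrightarrow \ell^2(G)$ intertwines $\lambda_{H_\mu}(k)$ with $\lambda_G(k)$ for every $k\in H_\mu$ (which is just how $\lambda_G$ acts on the indicator functions of group elements) gives
\[\widetilde{r}(g,f)=\lambda_G(g)\,r(g^{-1}f)=-\lambda_G(g)\lambda_G(g^{-1}f)\,r(f^{-1}g)=-\lambda_G(f)\,r(f^{-1}g)=-\widetilde{r}(f,g).\]

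There is no genuine obstacle here. The only mildly delicate point is the identification of $\lambda_{H_\mu}$ on $\ell^2(H_\mu)$ with the restriction of $\lambda_G$ to $H_\mu$ acting on the image of $\ell^2(H_\mu)$ inside $\ell^2(G)$, which I would flag explicitly but not belabor. The lemma is an elementary bookkeeping statement that upgrades axiom (1) of Definition \ref{Def:array} and the equivariance of $r$ from $H_\mu$ to the ambient group, setting up the invariances needed to assemble the array $R$ from pieces indexed by separating cosets later in the section.
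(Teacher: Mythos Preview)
Your proof is correct and follows essentially the same route as the paper's: both handle the trivial case $f^{-1}g\notin H_\mu$ by noting both sides vanish, and in the nontrivial case both use the array antisymmetry $\lambda_{H_\mu}(k)r(k^{-1})=-r(k)$ for (1) and the homomorphism property of $\lambda_G$ for (2). The only cosmetic differences are that you prove (2) before (1) and that you make explicit the intertwining of $\lambda_{H_\mu}$ with $\lambda_G$ under the embedding $\ell^2(H_\mu)\hookrightarrow\ell^2(G)$, which the paper uses silently.
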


\begin{proof}
(1) For any $f,g\in G$, $f^{-1}g\in H_\mu$ if and only if $g^{-1}f\in H_\mu$. If $f^{-1}g\in H_\mu$, we have
\begin{align*}
\widetilde{r}(g,f)
&=\lambda_G(g) r(g^{-1}f) \\
&=\lambda_G(g)\left(-\lambda_{H_\mu}(g^{-1}f)r((g^{-1}f)^{-1})\right)
=-\lambda_G(f)r(f^{-1}g)
=-\widetilde{r}(f,g).
\end{align*}
\par
(2) If $f^{-1}g\in H_\mu$, we have
\[\widetilde{r}(hf,hg)=\lambda_G(hf) r((hf)^{-1}hg)
=\lambda_G(h)\lambda_G(f)r(f^{-1}g)
=\lambda_G(h)\widetilde{r}(f,g).
\]
\end{proof}

\begin{lem}\label{Lem:bound of array}
For any $g\in H_\mu$, we have
    \[\sup_{h\in H_\mu}\|\widetilde{r}(g,h)-\widetilde{r}(1,h)\|_2= 
    \sup_{h\in H_\mu}\|\widetilde{r}(1,h)-\widetilde{r}(1,hg)\|_2<\infty.\]
\end{lem}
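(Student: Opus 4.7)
The plan is to unfold $\widetilde{r}$ explicitly and exploit both axioms of Definition \ref{Def:array} for $r$: axiom (1) (the antisymmetry $\lambda_{H_\mu}(k) r(k^{-1}) = -r(k)$) to establish the equality of the two suprema, and axiom (2) (the uniform translation bound) to establish finiteness.

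Note first that since $g \in H_\mu$, for any $h \in H_\mu$ we also have $g^{-1}h, hg \in H_\mu$. Using the definition of $\widetilde{r}$ and the fact that the embedding $\ell^2(H_\mu) \hookrightarrow \ell^2(G)$ intertwines $\lambda_{H_\mu}(g)$ with $\lambda_G(g)$ for $g \in H_\mu$, all the relevant values are:
\[ \widetilde{r}(g,h) = \lambda_G(g) r(g^{-1}h), \qquad \widetilde{r}(1,h) = r(h), \qquad \widetilde{r}(1,hg) = r(hg). \]
For the equality of the suprema, I would apply axiom (1) to $k = g^{-1}h$ and to $k = h$ to rewrite
\[ \lambda_G(g) r(g^{-1}h) = -\lambda_G(g) \lambda_G(g^{-1}h) r(h^{-1}g) = -\lambda_G(h) r(h^{-1}g), \qquad r(h) = -\lambda_G(h) r(h^{-1}). \]
Combining, $\widetilde{r}(g,h) - \widetilde{r}(1,h) = \lambda_G(h)\bigl(r(h^{-1}) - r(h^{-1}g)\bigr)$. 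Since $\lambda_G(h)$ is unitary, taking norms and substituting $h \mapsto h^{-1}$ (which is a bijection of $H_\mu$) gives
\[ \sup_{h\in H_\mu}\|\widetilde{r}(g,h)-\widetilde{r}(1,h)\|_2 = \sup_{h\in H_\mu}\|r(h) - r(hg)\|_2 = \sup_{h\in H_\mu}\|\widetilde{r}(1,h) - \widetilde{r}(1,hg)\|_2, \]
where the last equality uses $\widetilde{r}(1,h) = r(h)$ and $\widetilde{r}(1,hg) = r(hg)$ for $h \in H_\mu$.

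Finally, to see this common supremum is finite, substitute $k = g^{-1}h$ in the first expression to obtain
\[ \|\widetilde{r}(g,h) - \widetilde{r}(1,h)\|_2 = \|\lambda_G(g) r(k) - r(gk)\|_2 = \|r(gk) - \lambda_{H_\mu}(g) r(k)\|_2, \]
and this is uniformly bounded in $k \in H_\mu$ by axiom (2) of the array $r$ applied to the element $g \in H_\mu$. The argument involves essentially no obstacle beyond bookkeeping; the only point requiring care is recognizing that axiom (1) is what converts the "left-translation" difference $\widetilde{r}(g,h) - \widetilde{r}(1,h)$ into the "right-translation" difference $r(h) - r(hg)$ after a unitary twist and an inversion of the index.
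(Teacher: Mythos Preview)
Your proof is correct and takes essentially the same approach as the paper. The only cosmetic difference is that the paper packages the antisymmetry/equivariance step by invoking Lemma~\ref{Lem:basic property} (applying $\lambda_G(h^{-1})$ and using $\widetilde{r}(hf,hg)=\lambda_G(h)\widetilde{r}(f,g)$ and $\widetilde{r}(g,f)=-\widetilde{r}(f,g)$), whereas you unfold that lemma and apply array axiom~(1) directly to $k=g^{-1}h$ and $k=h$; the underlying computation is the same.
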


\begin{proof}
For any $g\in H_\mu$, we have
\begin{align*}
    \sup_{h\in H_\mu}\|\widetilde{r}(g,h)-\widetilde{r}(1,h)\|_2
&=\sup_{h\in H_\mu}\|\lambda_{H_\mu}(g)r(g^{-1}h)-r(h)\|_2 \\
&=\sup_{h\in H_\mu}\|r(g^{-1}h)-\lambda_{H_\mu}(g^{-1})r(h)\|_2<\infty,
\end{align*}
and by Lemma \ref{Lem:basic property},
\begin{align*}
    \sup_{h\in H_\mu}\|\widetilde{r}(1,h)-\widetilde{r}(1,hg)\|_2
&=\sup_{h\in H_\mu}\|\lambda_{H_\mu}(h^{-1})\left(\widetilde{r}(1,h)-\widetilde{r}(1,hg)\right)\|_2 \\
&=\sup_{h\in H_\mu}\|\widetilde{r}(h^{-1},1)-\widetilde{r}(h^{-1},g)\|_2 \\
&=\sup_{h\in H_\mu}\|-\widetilde{r}(1,h^{-1})+\widetilde{r}(g,h^{-1})\|_2 \\
&=\sup_{h\in H_\mu}\|\widetilde{r}(g,h)-\widetilde{r}(1,h)\|_2.
\end{align*}
\end{proof}

In the following, we denote 
\begin{equation}\label{Eq:Kg}
K_g=\sup_{h\in H_\mu}\|\widetilde{r}(g,h)-\widetilde{r}(1,h)\|_2= 
    \sup_{h\in H_\mu}\|\widetilde{r}(1,h)-\widetilde{r}(1,hg)\|_2.
\end{equation}

\begin{rem}
For any $g\in H_\mu$, we have 
\begin{align*}
K_g
&=\sup_{h\in H_\mu}\|\widetilde{r}(1,h)-\widetilde{r}(1,hg)\|_2
=\sup_{h\in H_\mu}\|\widetilde{r}(1,hg^{-1})-\widetilde{r}(1,(hg^{-1})g)\|_2 \\
&=\sup_{h\in H_\mu}\|\widetilde{r}(1,h)-\widetilde{r}(1,hg^{-1})\|_2
=K_{g^{-1}}.
\end{align*}
\end{rem}

\begin{rem}\label{Rem:bound norm by K}
By Lemma \ref{Lem:basic property} (1), $\widetilde{r}(1,1)=0$, hence for any $g\in H_\mu$, we have
\[\|\widetilde{r}(1,g)\|_2=\|\widetilde{r}(g,1)-\widetilde{r}(1,1)\|_2
\le \sup_{h\in H_\mu}\|\widetilde{r}(g,h)-\widetilde{r}(1,h)\|_2
=K_g.
\]
\end{rem}

\begin{lem}\label{Lem:two arrays}
For any elements $f_1,f_2,g_1,g_2\in G$ that are in the same coset of $H_\mu$, we have
\[\|\widetilde{r}(f_1,g_1)-\widetilde{r}(f_2,g_2)\|_2\le K_{f_1^{-1}f_2}+K_{g_1^{-1}g_2}.\]
\end{lem}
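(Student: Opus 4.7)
The plan is to apply the triangle inequality to split the difference, then use the $G$-equivariance from Lemma \ref{Lem:basic property}(2) to reduce each summand to a form that is directly controlled by the constants $K_g$ defined in (\ref{Eq:Kg}).

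More concretely, I would first insert $\widetilde{r}(f_2,g_1)$ as an intermediate term and write
\[
\|\widetilde{r}(f_1,g_1)-\widetilde{r}(f_2,g_2)\|_2 \le \|\widetilde{r}(f_1,g_1)-\widetilde{r}(f_2,g_1)\|_2 + \|\widetilde{r}(f_2,g_1)-\widetilde{r}(f_2,g_2)\|_2.
\]
Since $f_1,f_2,g_1,g_2$ all lie in the same $H_\mu$-coset, the elements $f_2^{-1}f_1$, $f_2^{-1}g_1$, $f_2^{-1}g_2$, $g_1^{-1}g_2$ all lie in $H_\mu$, so each term $\widetilde{r}(\cdot,\cdot)$ appearing below is honestly defined via $r$.

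For the first summand, I would apply Lemma \ref{Lem:basic property}(2) with $h=f_2^{-1}$ to both $\widetilde{r}(f_1,g_1)$ and $\widetilde{r}(f_2,g_1)$; since $\lambda_G(f_2^{-1})$ is an isometry, this gives
\[
\|\widetilde{r}(f_1,g_1)-\widetilde{r}(f_2,g_1)\|_2 = \|\widetilde{r}(f_2^{-1}f_1,\,f_2^{-1}g_1)-\widetilde{r}(1,\,f_2^{-1}g_1)\|_2 \le K_{f_2^{-1}f_1} = K_{f_1^{-1}f_2},
\]
where the inequality uses the first expression in (\ref{Eq:Kg}) and the last equality is the symmetry $K_g = K_{g^{-1}}$ noted after (\ref{Eq:Kg}). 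For the second summand, I would again apply Lemma \ref{Lem:basic property}(2) with $h=f_2^{-1}$ to obtain
\[
\|\widetilde{r}(f_2,g_1)-\widetilde{r}(f_2,g_2)\|_2 = \|\widetilde{r}(1,\,f_2^{-1}g_1)-\widetilde{r}(1,\,f_2^{-1}g_1\cdot g_1^{-1}g_2)\|_2 \le K_{g_1^{-1}g_2},
\]
using the second expression in (\ref{Eq:Kg}) with the element $f_2^{-1}g_1 \in H_\mu$ playing the role of $h$ and $g_1^{-1}g_2 \in H_\mu$ playing the role of $g$.

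Summing the two bounds yields the claim. There is no real obstacle in this argument; it is purely a bookkeeping exercise of choosing where to insert the intermediate term and applying equivariance with the correct translating element. The only point worth double-checking is that the symmetry $K_g = K_{g^{-1}}$ is available so that $K_{f_2^{-1}f_1}$ can be rewritten as $K_{f_1^{-1}f_2}$ to match the statement.
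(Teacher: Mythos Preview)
Your argument is correct and essentially identical to the paper's: both insert $\widetilde{r}(f_2,g_1)$ and use equivariance to reduce to the defining suprema in (\ref{Eq:Kg}). The only cosmetic difference is that the paper translates the first summand by $f_1^{-1}$ (obtaining $K_{f_1^{-1}f_2}$ directly) rather than by $f_2^{-1}$, so it does not need to invoke the symmetry $K_g=K_{g^{-1}}$.
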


\begin{proof}
By Lemma \ref{Lem:bound of array}, we have
\begin{align*}
    \|\widetilde{r}(f_1,&g_1)-\widetilde{r}(f_2,g_2)\|_2
    \le \|\widetilde{r}(f_1,g_1)-\widetilde{r}(f_2,g_1)\|_2
    +\|\widetilde{r}(f_2,g_1)-\widetilde{r}(f_2,g_2)\|_2 \\
    &\le \|\widetilde{r}(1,f_1^{-1}g_1)-\widetilde{r}(f_1^{-1}f_2,f_1^{-1}g_1)\|_2
    +\|\widetilde{r}(1,f_2^{-1}g_1)-\widetilde{r}(1,f_2^{-1}g_1\cdot g_1^{-1}g_2)\|_2 \\
    &\le K_{f_1^{-1}f_2}+K_{g_1^{-1}g_2}.
\end{align*}
\end{proof}

For $f,g\in G$ and $xH_\mu \in S_\mu(f,g;D)$, we define $\widetilde{R}(f,g;xH_\mu)\in \ell^2(G)$ by
\[\widetilde{R}(f,g;xH_\mu)=\frac{1}{|E(f,g;xH_\mu,D)|}\sum_{(u,v)\in E(f,g;xH_\mu,D)} \widetilde{r}(u,v).\]

\begin{rem}\label{Rem:support of RxH}
This is well-defined because $E(f,g;xH_\mu,D)$ is finite by Lemma \ref{Lem:3.8 of HO} (c). Also, the support of $\widetilde{R}(f,g;xH_\mu)$ is in $xH_\mu$ by Remark \ref{support}.
\end{rem}

\begin{lem}\label{Lem:property of RxH}
For any $f,g,h\in G$ and $xH_\mu\in S_\mu(f,g;D)$, the following holds.
\begin{itemize}
    \item [(a)]
    $\widetilde{R}(g,f;xH_\mu)=-\widetilde{R}(f,g;xH_\mu)$.
    \item [(b)]
    $\widetilde{R}(hf,hg;hxH_\mu)=\lambda_G(h)\widetilde{R}(f,g;xH_\mu)$.
\end{itemize}
\end{lem}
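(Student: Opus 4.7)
The plan is to verify both claims by a direct calculation that pushes the required symmetry from the level of individual $\widetilde{r}$-values to the averaged quantity $\widetilde{R}(\cdot,\cdot;\cdot)$. Both parts will amount to applying Lemma \ref{Lem:3.8 of HO} to reindex the sum defining $\widetilde{R}$ and then invoking the corresponding identity in Lemma \ref{Lem:basic property}. There is no real obstacle: the only thing to check is that the normalizing cardinalities match, which is immediate since Lemma \ref{Lem:3.8 of HO}(a) and (b) describe $E(g,f;xH_\mu,D)$ and $E(hf,hg;hxH_\mu,D)$ as images of $E(f,g;xH_\mu,D)$ under the bijections $(u,v)\mapsto(v,u)$ and $(u,v)\mapsto(hu,hv)$, respectively, so in both cases the cardinality is the same as $|E(f,g;xH_\mu,D)|$.

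For part (a), I would start from the definition
\[\widetilde{R}(g,f;xH_\mu)=\frac{1}{|E(g,f;xH_\mu,D)|}\sum_{(u',v')\in E(g,f;xH_\mu,D)} \widetilde{r}(u',v'),\]
reindex the sum via $(u',v')=(v,u)$ with $(u,v)\in E(f,g;xH_\mu,D)$ using Lemma \ref{Lem:3.8 of HO}(a), and then apply Lemma \ref{Lem:basic property}(1), which gives $\widetilde{r}(v,u)=-\widetilde{r}(u,v)$. Pulling the minus sign and the (equal) normalization factor out of the sum yields $-\widetilde{R}(f,g;xH_\mu)$.

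For part (b), I would similarly use Lemma \ref{Lem:3.8 of HO}(b) to reindex the sum defining $\widetilde{R}(hf,hg;hxH_\mu)$ by $(u',v')=(hu,hv)$ with $(u,v)\in E(f,g;xH_\mu,D)$, then apply Lemma \ref{Lem:basic property}(2) to rewrite each summand as $\widetilde{r}(hu,hv)=\lambda_G(h)\widetilde{r}(u,v)$. Since $\lambda_G(h)$ is linear and does not depend on $(u,v)$, it factors out of the finite sum, and combined with the equal normalizing constants this gives $\lambda_G(h)\widetilde{R}(f,g;xH_\mu)$.

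In short, the proof is a short bookkeeping argument: use Lemma \ref{Lem:3.8 of HO} to identify the index sets, use Lemma \ref{Lem:basic property} to transfer each term, and factor out the sign in (a) and the unitary $\lambda_G(h)$ in (b). There is no analytic subtlety — the equalities hold term by term in $\ell^2(G)$, and the finiteness of $E(f,g;xH_\mu,D)$ from Lemma \ref{Lem:3.8 of HO}(c) is what makes the normalization and the linearity manipulations legal without any convergence concerns.
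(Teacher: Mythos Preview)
Your proposal is correct and matches the paper's approach: the paper's proof is the single line ``It follows from Lemma \ref{Lem:3.2 of HO}, Lemma \ref{Lem:3.8 of HO}, and Lemma \ref{Lem:basic property},'' and your argument is exactly the unpacking of this via reindexing with Lemma \ref{Lem:3.8 of HO} and applying Lemma \ref{Lem:basic property} termwise. The only thing you leave implicit is the role of Lemma \ref{Lem:3.2 of HO}, which guarantees that $xH_\mu\in S_\mu(g,f;D)$ and $hxH_\mu\in S_\mu(hf,hg;D)$ so that the left-hand sides $\widetilde{R}(g,f;xH_\mu)$ and $\widetilde{R}(hf,hg;hxH_\mu)$ are actually defined; this is a harmless omission since it is already built into the hypotheses of Lemma \ref{Lem:3.8 of HO}.
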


\begin{proof}
It follows from Lemma \ref{Lem:3.2 of HO}, Lemma \ref{Lem:3.8 of HO}, and Lemma \ref{Lem:basic property}.
\end{proof}

For $n\in \RR_\ge0$, we define a constant by
\begin{equation}\label{Eq:Kn}
K_n=\max\{K_g\mid g\in H_\mu \wedge \widehat{d}_\mu(1,g)\le n\},    
\end{equation}
where $K_g$ is defined by (\ref{Eq:Kg}). Because $(H_\mu,\widehat{d}_\mu)$ is a locally finite metric space (cf. Definition \ref{Def:hyp emb} (2)), we have $K_n<\infty$.

\begin{lem}\label{Lem:difference}
For any $f,g\in G$, any $xH_\mu \in S_\mu(f,g;D)$, and any $(u,v)\in E(f,g;xH_\mu,D)$, we have
\[\|\widetilde{R}(f,g;xH_\mu)-\widetilde{r}(u,v)\|_2\le 2K_D.\]
\end{lem}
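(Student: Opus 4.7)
The plan is to expand the definition of $\widetilde{R}(f,g;xH_\mu)$ as an average, pull the fixed term $\widetilde{r}(u,v)$ inside the average, apply the triangle inequality, and then bound each summand using Lemma \ref{Lem:two arrays} combined with the entrance/exit estimates from Lemma \ref{Lem:3.3 of HO}~(b).

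Concretely, writing $E = E(f,g;xH_\mu,D)$ and using that $\widetilde{r}(u,v) = \frac{1}{|E|}\sum_{(u',v')\in E}\widetilde{r}(u,v)$, I would start with
\[
\widetilde{R}(f,g;xH_\mu) - \widetilde{r}(u,v)
= \frac{1}{|E|}\sum_{(u',v') \in E}\bigl(\widetilde{r}(u',v') - \widetilde{r}(u,v)\bigr),
\]
so by the triangle inequality
\[
\|\widetilde{R}(f,g;xH_\mu) - \widetilde{r}(u,v)\|_2
\le \frac{1}{|E|}\sum_{(u',v') \in E}\|\widetilde{r}(u',v') - \widetilde{r}(u,v)\|_2.
\]
Since every pair $(u,v),(u',v')\in E$ consists of entrance/exit points of geodesics from $f$ to $g$ that penetrate the same coset $xH_\mu$, all four points $u,v,u',v'$ lie in $xH_\mu$, and in particular $u^{-1}u', v^{-1}v' \in H_\mu$. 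Hence Lemma \ref{Lem:two arrays} applies and gives
\[
\|\widetilde{r}(u',v') - \widetilde{r}(u,v)\|_2 \le K_{u^{-1}u'} + K_{v^{-1}v'}.
\]

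Next I would estimate the two indices. By Lemma \ref{Lem:3.3 of HO}~(b), for any two geodesics $p,q \in \mathcal{G}(f,g)$ with $(u,v) = (p_{in}(xH_\mu),p_{out}(xH_\mu))$ and $(u',v')=(q_{in}(xH_\mu),q_{out}(xH_\mu))$, one has
\[
\widehat{d_\mu}(u,u') \le 3C \quad\text{and}\quad \widehat{d_\mu}(v,v') \le 3C,
\]
and by the definition of $\widehat{d_\mu}$ on $G\times G$, this means $\widehat{d_\mu}(1,u^{-1}u'),\widehat{d_\mu}(1,v^{-1}v') \le 3C$. Using (\ref{Eq:D>3C}) and the obvious monotonicity of $n \mapsto K_n$ in its definition (\ref{Eq:Kn}), I then obtain $K_{u^{-1}u'} \le K_{3C} \le K_D$ and similarly $K_{v^{-1}v'} \le K_D$.

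Plugging these bounds back in yields $\|\widetilde{r}(u',v') - \widetilde{r}(u,v)\|_2 \le 2K_D$ for every $(u',v') \in E$, and averaging gives the desired inequality $\|\widetilde{R}(f,g;xH_\mu) - \widetilde{r}(u,v)\|_2 \le 2K_D$. There is no real obstacle here; the only point to verify carefully is that all arguments of $\widetilde{r}$ that appear lie in the same coset $xH_\mu$ so that Lemma \ref{Lem:two arrays} is applicable and $K_{u^{-1}u'}, K_{v^{-1}v'}$ are finite (which is where local finiteness of $(H_\mu,\widehat{d_\mu})$ enters via the finiteness of $K_D$).
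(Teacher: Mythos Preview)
Your proof is correct and follows essentially the same approach as the paper: both arguments write $\widetilde{R}(f,g;xH_\mu)-\widetilde{r}(u,v)$ as an average of differences $\widetilde{r}(u',v')-\widetilde{r}(u,v)$, bound each summand by $K_{u^{-1}u'}+K_{v^{-1}v'}$ via Lemma~\ref{Lem:two arrays}, and then use Lemma~\ref{Lem:3.3 of HO}(b) together with $3C\le D$ to conclude each summand is at most $2K_D$.
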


\begin{proof}
For any $(u,v),(u',v')\in E(f,g;xH_\mu,D)$, we have $\widehat{d}_\mu(u,u')\le 3C\le D$ and $\widehat{d}_\mu(v,v')\le 3C\le D$ by Lemma \ref{Lem:3.3 of HO} (b). This implies, by Lemma \ref{Lem:two arrays},
\[\|\widetilde{r}(u,v)-\widetilde{r}(u',v')\|_2
    \le K_{u^{-1}u'}+K_{v^{-1}v'}\le K_D+K_D=2K_D.
\]
Thus, for any $(u,v)\in E(f,g;xH_\mu,D)$, we have
\begin{align*}
    \|\widetilde{R}(f,g;xH_\mu)-\widetilde{r}(u,v)\|_2
    &= \left\|\frac{1}{|E(f,g;xH_\mu,D)|}\sum_{(u',v')\in E(f,g;xH_\mu,D)} \left(\widetilde{r}(u',v')-\widetilde{r}(u,v)\right)\right\|_2 \\
    &\le \frac{1}{|E(f,g;xH_\mu,D)|}\sum_{(u',v')\in E(f,g;xH_\mu,D)} \left\|\widetilde{r}(u',v')-\widetilde{r}(u,v)\right\|_2 \\
    &\le \frac{1}{|E(f,g;xH_\mu,D)|}\sum_{(u',v')\in E(f,g;xH_\mu,D)}2K_D
    =2K_D.
\end{align*}
\end{proof}

Finally, we define a map $\widetilde{R}\colon G\times G\to \ell^2(G)$ by
\[\widetilde{R}(f,g)=\sum_{xH_\mu \in S_\mu(f,g;D)} \widetilde{R}(f,g;xH_\mu).\]
This implicitly means that if $S_\mu(f,g;D)$ is empty, then $\widetilde{R}(f,g)=0$.

\begin{lem}\label{Lem:norm of R}
For any $f,g,h\in G$, the following hold.
\begin{itemize}
    \item [(a)]
    $\widetilde{R}(g,f)=-\widetilde{R}(f,g)$.
    \item [(b)]
    $\widetilde{R}(hf,hg)=\lambda_G(h)\widetilde{R}(f,g)$.
    \item [(c)]
    $\|\widetilde{R}(f,g)\|_2^2=\sum_{xH_\mu \in S_\mu(f,g;D)} \|\widetilde{R}(f,g;xH_\mu)\|_2^2$.
\end{itemize}
\end{lem}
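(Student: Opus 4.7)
\textbf{Proof plan for Lemma \ref{Lem:norm of R}.} The three assertions should follow quickly from what has already been established, so the plan is mainly to assemble the right ingredients.

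For part (a), I would start from the defining sum $\widetilde{R}(f,g) = \sum_{xH_\mu \in S_\mu(f,g;D)} \widetilde{R}(f,g;xH_\mu)$ and apply Lemma \ref{Lem:3.2 of HO}(a), which gives $S_\mu(g,f;D) = S_\mu(f,g;D)$, so the index sets in the expansions of $\widetilde{R}(f,g)$ and $\widetilde{R}(g,f)$ are identical. Then Lemma \ref{Lem:property of RxH}(a) gives the sign flip termwise, and summing yields $\widetilde{R}(g,f) = -\widetilde{R}(f,g)$.

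For part (b), I would reindex the sum using the bijection $S_\mu(f,g;D) \to S_\mu(hf,hg;D)$, $xH_\mu \mapsto hxH_\mu$, provided by Lemma \ref{Lem:3.2 of HO}(b). On each term, Lemma \ref{Lem:property of RxH}(b) gives $\widetilde{R}(hf,hg;hxH_\mu) = \lambda_G(h)\widetilde{R}(f,g;xH_\mu)$, and the linearity and continuity of $\lambda_G(h)$ on $\ell^2(G)$ allow us to pull it outside the sum, giving the claim.

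Part (c) is a Pythagoras statement. The key observation is that by Remark \ref{Rem:support of RxH}, the support of $\widetilde{R}(f,g;xH_\mu)$ as a vector in $\ell^2(G)$ is contained in $xH_\mu$. For two distinct cosets $xH_\mu \neq yH_\mu$ appearing in $S_\mu(f,g;D)$, the subsets $xH_\mu$ and $yH_\mu$ of $G$ are disjoint, hence the corresponding terms $\widetilde{R}(f,g;xH_\mu)$ and $\widetilde{R}(f,g;yH_\mu)$ have disjoint supports in $G$ and are therefore orthogonal in $\ell^2(G)$. Since $S_\mu(f,g;D)$ is finite by Corollary \ref{Cor:separating cosets are finite}, the sum defining $\widetilde{R}(f,g)$ is a finite orthogonal sum, so $\|\widetilde{R}(f,g)\|_2^2$ decomposes as the stated sum of squared norms.

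No step poses a genuine obstacle; the only thing to be careful about is the reindexing in (b), where one should note that the map $xH_\mu \mapsto hxH_\mu$ is indeed a bijection of the index sets (as guaranteed by Lemma \ref{Lem:3.2 of HO}(b)), and in (c) that the finiteness of $S_\mu(f,g;D)$ makes the orthogonal decomposition rigorous without any convergence subtlety.
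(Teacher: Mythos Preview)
Your proposal is correct and follows exactly the paper's approach: parts (a) and (b) are derived from Lemma~\ref{Lem:3.2 of HO} together with Lemma~\ref{Lem:property of RxH}, and part (c) from the disjoint-support observation in Remark~\ref{Rem:support of RxH}. You have simply spelled out in more detail what the paper states in one sentence.
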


\begin{proof}
(a) and (b) follow from Lemma \ref{Lem:3.2 of HO} and Lemma \ref{Lem:property of RxH}. (c) follows from the fact that the support of $\widetilde{R}(f,g;xH_\mu)$ is in $xH_\mu$ as stated in Remark \ref{Rem:support of RxH}.
\end{proof}

For $g\in G$, we define
\begin{equation}\label{Eq:Lg}
L_g=\max\{\widehat{d}_\mu(u,v) \mid (u,v)\in E(1,g;xH_\mu,D),\; xH_\mu\in S_\mu(1,g;D)\}.
\end{equation}

$L_g\in\NN\cup\{0\}$ is well-defined by Corollary \ref{Cor:separating cosets are finite} and Lemma \ref{Lem:3.8 of HO} (c). \par
The proof of the following lemma is similar to Lemma 4.7 of \cite{HO}. 

\begin{lem}\label{Lem:bounded area}
For any $g\in G$, we have
\[\sup_{h\in G} \|\widetilde{R}(1,h)+\widetilde{R}(h,g)+\widetilde{R}(g,1)\|_2<\infty.\]
\end{lem}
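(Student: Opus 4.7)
The plan is to follow the scheme of the proof of Lemma 4.7 of \cite{HO}, adapted to our array-valued setting. Write $\mathcal{A} = S_\mu(1,h;D)$, $\mathcal{B} = S_\mu(h,g;D)$, and $\mathcal{C} = S_\mu(1,g;D)$. By the definition of $\widetilde{R}$ together with Lemma \ref{Lem:property of RxH}(a), first expand
\[\widetilde{R}(1,h)+\widetilde{R}(h,g)+\widetilde{R}(g,1) = \sum_{xH_\mu\in\mathcal{A}}\widetilde{R}(1,h;xH_\mu) + \sum_{xH_\mu\in\mathcal{B}}\widetilde{R}(h,g;xH_\mu) - \sum_{xH_\mu\in\mathcal{C}}\widetilde{R}(1,g;xH_\mu).\]
The main step is to apply Lemma \ref{Lem:3.9 of HO} three times, once to each of $\mathcal{A}$, $\mathcal{B}$, $\mathcal{C}$ with the third vertex as the intermediate point, producing decompositions $S'\sqcup S''\sqcup F$ in each case with $|F|\le 2$. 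For every coset lying in the generic pieces $S'$ or $S''$ of one of these decompositions, the $E$-set equalities furnished by Lemma \ref{Lem:3.9 of HO}, combined with Lemma \ref{Lem:3.8 of HO}(a) and Lemma \ref{Lem:property of RxH}(a), will make the corresponding $\widetilde{R}(\cdot,\cdot;xH_\mu)$ contributions cancel against one another across the three sums. A careful case analysis on which of $\mathcal{A},\mathcal{B},\mathcal{C}$ contain a given coset shows that, after all such cancellations, the only surviving contributions are indexed by the exceptional $F$-sets, which together contribute at most a fixed number of terms.

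The final step is to bound these surviving exceptional terms. For each such coset $xH_\mu$ I will use Lemma \ref{Lem:difference} to replace $\widetilde{R}(\cdot,\cdot;xH_\mu)$ by a single $\widetilde{r}(u,v)$ up to additive error $2K_D$. To bound $\|\widetilde{r}(u,v)\|_2$ uniformly in $h$, I will use the geometric content of the proof of Lemma \ref{Lem:3.9 of HO} from \cite{HO}, namely that an $F$-coset arising from a decomposition with intermediate point $\star$ is penetrated by a geodesic through $\star$ with its entrance/exit pair close to $\star$ in $\Gamma(G,X\cup\mathcal{H})$. When the intermediate point is $1$ or $g$, this yields a bound on $\widehat{d}_\mu(u,v)$ in terms of $g$ only. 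Lemma \ref{Lem:two arrays} together with finiteness of $K_n$ in (\ref{Eq:Kn}) then converts this geometric bound into the required $h$-independent norm estimate, and summing over the finitely many surviving terms completes the proof.

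The main obstacle is this last step: the bound on $\|\widetilde{r}(u,v)\|_2$ must be uniform in $h$, yet the one decomposition among the three that uses $h$ as intermediate point gives, a priori, only an $h$-dependent estimate. The key device for handling this case is to use Lemma \ref{Lem:property of RxH}(a) and the antisymmetry $\widetilde{R}(f,g)=-\widetilde{R}(g,f)$ to rewrite such contributions in a form where the intermediate point is $1$ or $g$ instead. Making the corresponding geometric estimate precise requires extracting information from the proof of Lemma \ref{Lem:3.9 of HO} in \cite{HO} rather than its statement, and is the most delicate part of the argument.
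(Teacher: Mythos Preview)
Your overall plan---expand coset-by-coset, apply Lemma \ref{Lem:3.9 of HO} three times, cancel the $S'$ and $S''$ pieces, and bound the at most six surviving $F$-cosets---matches the paper exactly. The gap is in your final step. You propose to replace each surviving $\widetilde{R}(\cdot,\cdot;xH_\mu)$ by a single $\widetilde{r}(u,v)$ via Lemma \ref{Lem:difference} and then bound $\|\widetilde{r}(u,v)\|_2$ through a bound on $\widehat{d}_\mu(u,v)$. This cannot succeed: for a coset $xH_\mu$ in $F_{1,h}$ the component $a$ of a geodesic $p\in\mathcal{G}(1,h)$ satisfies $\widehat{d}_\mu(a_-,a_+)>D$ by definition of a separating coset, and there is no $h$-independent upper bound on it. The heuristic you extract from the proof of Lemma \ref{Lem:3.9 of HO}, that entrance and exit points are close to the intermediate vertex in $\Gamma(G,X\cup\mathcal{H})$, does not translate into a $\widehat{d}_\mu$-bound on $u^{-1}v$, and the antisymmetry rewrite you suggest does not change which pair $(u,v)$ you must control.

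What the paper does instead is, for each $xH_\mu\in F_{1,h}$, a case split on whether $xH_\mu$ also lies in $S_\mu(h,g;D)$ and $S_\mu(g,1;D)$. It fixes geodesics $p,q,r$ forming the triangle $(1,h,g)$ and uses Proposition \ref{Prop:2.4 of HO} on auxiliary subtriangles to bound the $\widehat{d}_\mu$-distances between the entrance and exit points of $p,q,r$ at $xH_\mu$ pairwise (each at most $3C$ or $4C$). In the worst case (all three geodesics penetrate and $xH_\mu$ is separating for all three pairs), the long components do \emph{not} get bounded individually; instead one pairs $\widetilde{r}(a_-,a_+)$ with $-\widetilde{r}(b_+,b_-)$ and applies Lemma \ref{Lem:two arrays}, producing $K_{a_-^{-1}b_+}+K_{a_+^{-1}b_-}$. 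The first of these is controlled by $L_g$ (since $c$ lies on a geodesic from $g$ to $1$), and the third contribution is absorbed wholesale into $\|\widetilde{R}(g,1)\|_2$. In the remaining cases, non-membership in one of the other separating sets gives $\widehat{d}_\mu\le D$ on the corresponding component, and the triangle inequalities then bound everything by constants depending only on $D$. The point is that cancellation between the two or three $\widetilde{r}$-terms at the same coset is essential; bounding them one at a time, as you propose, is not possible.
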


\begin{proof}
For $g,h\in G$, suppose that $\widetilde{R}(1,h)+\widetilde{R}(h,g)+\widetilde{R}(g,1)
=\sum_{v\in G}\alpha_vv\in\ell^2(G)$. We define $\xi_{xH_\mu}=\sum_{v\in xH_\mu}\alpha_vv$ for each $H_\mu$-coset $xH_\mu$. Note that we have
\[\widetilde{R}(1,h)+\widetilde{R}(h,g)+\widetilde{R}(g,1)
=\sum_{xH_\mu\in G/H_\mu}\xi_{xH_\mu}.\] 
Let $S_\mu(1,h;D)=S_{1,h}'\sqcup S_{1,h}''\sqcup F_{1,h}$, $S_\mu(h,g;D)=S_{h,g}'\sqcup S_{h,g}''\sqcup F_{h,g}$, and $S_\mu(g,1;D)=S_{g,1}'\sqcup S_{g,1}''\sqcup F_{g,1}$ be the decomposition in Lemma \ref{Lem:3.9 of HO}. \par
If $xH_\mu \notin S_\mu(1,h;D)\cup S_\mu(h,g;D)\cup S_\mu(g,1;D)$, then we have $\xi_{xH_\mu}=0$ by Remark \ref{Rem:support of RxH}. \par
If $xH_\mu\in S_{1,h}'$, we have
\[\xi_{xH_\mu}=\widetilde{R}(1,h;xH_\mu)+\widetilde{R}(g,1;xH_\mu)
=\widetilde{R}(1,h;xH_\mu)-\widetilde{R}(1,g;xH_\mu)
=0
\]
since $S_{1,h}'\subset S_\mu(g,1;D)\setminus S_\mu(h,g;D)$ and
$E(1,h;xH_\mu,D)=E(1,g;xH_\mu,D)$. We can argue similarly for $S_{1,h}''$, $S_{h,g}'$, $S_{h,g}''$, $S_{g,1}'$, $S_{g,1}''$. Hence, we have
\[\widetilde{R}(1,h)+\widetilde{R}(h,g)+\widetilde{R}(g,1)=\sum_{xH_\mu\in F_{1,h}\cup F_{h,g}\cup F_{g,1}}\xi_{xH_\mu}.\] \par
If $xH_\mu\in F_{1,h}$, there are three cases to consider (see Figures \ref{Fig: triangle3}, \ref{Fig: triangle2}). We fix geodesic paths $p\in\mathcal{G}(1,h)$, $q\in\mathcal{G}(h,g)$, and $r\in\mathcal{G}(g,1)$. \par

\begin{figure}[htbp]

\begin{minipage}[c]{0.5\hsize}
\begin{center}
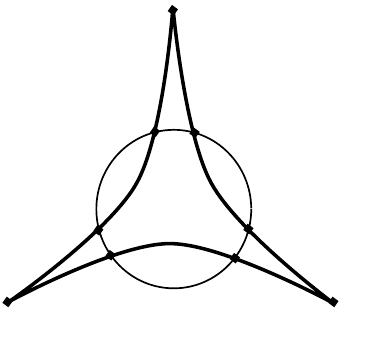
 \caption{Case 1, Case 2 a), Case 3 a)} 
 \label{Fig: triangle3}
\end{center}
\end{minipage} 
\hfill 
\begin{minipage}[c]{0.5\hsize}
\begin{center}
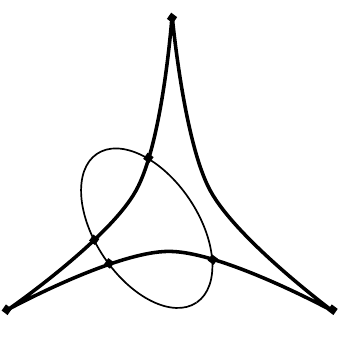
 \caption{Case 2 b), Case 3 b)}
 \label{Fig: triangle2}
\end{center}
\end{minipage} 
\end{figure}

\emph{Case 1}$\colon$ $xH_\mu \in S_\mu(h,g;D)\cap S_\mu(g,1;D)$. Let $a,b,c$ be the $H_\mu$-components of $p,q,r$ respectively, corresponding to $xH_\mu$, i.e. we have $p=p_1ap_2,~q=q_1bq_2,~r=r_1cr_2$. Let $e_1,e_2,e_3$ be paths of length at most 1 connecting $a_+$ to $b_-$, $b_+$ to $c_-$, $c_+$ to $a_-$ respectively, whose labels are in $H_\mu$. We claim that $e_1$ is isolated in the geodesic triangle $e_1q_1^{-1}p_2^{-1}$. Indeed, if $e_1$ is connected to an $H_\mu$-component $f$ of $p_2$ where we have $p_2=s_1 f s_2$, then there exists a path $t$ of length at most 1 connecting $a_-$ to $f_+$ since $a_-$ and $f_+$ are in the same $H_\mu$-coset. Hence, the path $p_1ts_2$ conneting $1$ to $h$ is shorter than $p$. This contradicts that $p$ is a geodesic. Similarly, $e_1$ is not connected to any $H_\mu$-component of $q_1$. Hence, we have $\widehat{d}_\mu(a_+,b_-)\le3C$ by Proposition \ref{Prop:2.4 of HO}. Similarly, we also have $\widehat{d}_\mu(b_+,c_-)\le3C,~\widehat{d}_\mu(c_+,a_-)\le3C$, hence
\[\widehat{d}_\mu(b_+,a_-)
\le \widehat{d}_\mu(b_+,c_-)+\widehat{d}_\mu(c_-,c_+)+\widehat{d}_\mu(c_+,a_-)
\le 3C+L_g+3C,
\]
where $L_g$ is defined by (\ref{Eq:Lg}).
Note that the inclusion $(c_-,c_+)\in E(g,1;xH_\mu,D)$ follows from the inclusions $r\in\mathcal{G}(g,1)$ and $xH_\mu\in S_\mu(g,1;D)$. Hence, by Lemma \ref{Lem:two arrays} (see also (\ref{Eq:Kn})), we have
\[\|\widetilde{r}(a_-,a_+)-\widetilde{r}(b_+,b_-)\|_2
\le K_{a_-^{-1}b_+}+K_{a_+^{-1}b_-}
\le K_{6C+L_g}+K_{3C}.
\]
Combining with Lemma \ref{Lem:difference} and Lemma \ref{Lem:norm of R} (c) (also note (\ref{Eq:D>3C})), we obtain
\begin{align*}
    \|\xi_{xH_\mu}\|_2
    &=\|\widetilde{R}(1,h;xH_\mu)+\widetilde{R}(h,g;xH_\mu)+\widetilde{R}(g,1;xH_\mu)\|_2 \\
    &\le\|\big(\widetilde{R}(1,h;xH_\mu)+\widetilde{R}(h,g;xH_\mu)\big)
    -\left(\widetilde{r}(a_-,a_+)+\widetilde{r}(b_-,b_+)\right)\|_2 \\
    &~~~~~~~~~~+\|\widetilde{r}(a_-,a_+)+\widetilde{r}(b_-,b_+)\|_2
    +\|\widetilde{R}(g,1;xH_\mu)\|_2 \\
    &\le 2K_D+2K_D+K_{6C+L_g}+K_{3C}+\|\widetilde{R}(g,1)\|_2 \\
    &\le 10K_{10D}+K_{6C+L_g}+\|\widetilde{R}(g,1)\|_2.
\end{align*}
\par
\emph{Case 2}$\colon$ $xH_\mu \in S_\mu(g,1;D)\setminus S_\mu(h,g;D)$ or $xH_\mu \in S_\mu(h,g;D)\setminus S_\mu(g,1;D)$. We can assume $xH_\mu \in S_\mu(g,1;D)\setminus S_\mu(h,g;D)$ without loss of generality. \par

2 a) If $q$ penetrates $xH_\mu$, then as in Case 1, let $a,b,c$ be the $H_\mu$-components of $p,q,r$ respectively, corresponding to $xH_\mu$ and $e_1,e_2,e_3$ be paths of length at most 1 connecting $a_+$ to $b_-$, $b_+$ to $c_-$, $c_+$ to $a_-$ respectively, whose labels are in $H_\mu$. In the same way as Case 1, we have $\widehat{d}_\mu(a_+,b_-)\le3C,~\widehat{d}_\mu(b_+,c_-)\le3C,~\widehat{d}_\mu(c_+,a_-)\le3C$. Also, by $xH_\mu\notin S_\mu(h,g;D)$, we have $\widehat{d}_\mu(b_-,b_+)\le D$, hence
\[\widehat{d}_\mu(a_+,c_-)
\le \widehat{d}_\mu(a_+,b_-)+\widehat{d}_\mu(b_-,b_+)+\widehat{d}_\mu(b_+,c_-)
\le 3C+D+3C \le3D.
\]
\par
2 b) If $q$ doesn't penetrate $xH_\mu$, let $a,c$ be the $H_\mu$-components of $p,r$ respectively, corresponding to $xH_\mu$, i.e. $p=p_1ap_2,~r=r_1cr_2$, and let $e_2,e_3$ be paths of length at most 1 connecting $a_+$ to $c_-$, $c_+$ to $a_-$ respectively, whose labels are in $H_\mu$. Because $e_3$ is isolated in the geodesic triangle $e_3p_1^{-1}r_2^{-1}$, we have $\widehat{d}_\mu(c_+,a_-)\le3C$. Also, because $q$ doesn't penetrate $xH_\mu$, $e_2$ is isolated in the geodesic 4-gon $e_2r_1^{-1}q^{-1}p_2^{-1}$, hence we have $\widehat{d}_\mu(a_+,c_-)\le4C\le3D$ by Proposition \ref{Prop:2.4 of HO}. \par

In both 2 a) and 2 b), we have $\widehat{d}_\mu(c_+,a_-)\le3C$ and $\widehat{d}_\mu(a_+,c_-)\le3D$. Thus, by Lemma \ref{Lem:two arrays} and \ref{Lem:difference}, we have
\begin{align*}
    \|\xi_{xH_\mu}\|_2 
    &=\|\widetilde{R}(1,h;xH_\mu)+\widetilde{R}(g,1;xH_\mu)\|_2 \\
    &\le \|\left(\widetilde{R}(1,h;xH_\mu)+\widetilde{R}(g,1;xH_\mu)\right)
    -\left(\widetilde{r}(a_-,a_+)+\widetilde{r}(c_-,c_+)\right)\|_2 \\
    &~~~~+\|\widetilde{r}(a_-,a_+)-\widetilde{r}(c_+,c_-)\|_2 \\
    &\le 2K_D+2K_D+ K_{3C}+K_{3D}\le 10K_{10D}.
\end{align*}
When $xH_\mu \in S_\mu(h,g;D)\setminus S_\mu(g,1;D)$, we can show $\|\xi_{xH_\mu}\|_2\le 10K_{10D}$ in the same way. \par

\emph{Case 3}$\colon$  $xH_\mu \notin S_\mu(h,g;D)\cup S_\mu(g,1;D)$. Note that by $xH_\mu\in F_{1,h}\subset S_\mu(1,h;D)$ and Lemma \ref{Lem:3.3 of HO} (a), at least one of $q$ and $r$ penetrates $xH_\mu$. \par

3 a) If both $q$ and $r$ penetrate $xH_\mu$, let $a,b,c,e_1,e_2,e_3$ be as in Case 1. Then, we have $\widehat{d}_\mu(a_+,b_-)\le3C,~\widehat{d}_\mu(b_+,c_-)\le3C,~\widehat{d}_\mu(c_+,a_-)\le3C$ in the same way as Case 1. Also, by $xH_\mu \notin S_\mu(h,g;D)\cup S_\mu(g,1;D)$, we have $\widehat{d}_\mu(b_-,b_+)\le D,~\widehat{d}_\mu(c_-,c_+)\le D$. Hence,
\begin{align*}
    \widehat{d}_\mu(a_+,a_-)
&\le \widehat{d}_\mu(a_+,b_-)+\widehat{d}_\mu(b_-,b_+)+\widehat{d}_\mu(b_+,c_-)
+\widehat{d}_\mu(c_-,c_+)+\widehat{d}_\mu(c_+,a_-) \\
&\le 3C+D+3C+D+3C \le5D.
\end{align*}
\par
3 b) If only one of $q$ and $r$ penetrates $xH_\mu$, assume $r$ penetrates $xH_\mu$ without loss of generality, and let $a,c,e_2,e_3$ be as in 2 b) of Case 2. Then, we have $\widehat{d}_\mu(c_+,a_-)\le3C$ and $\widehat{d}_\mu(a_+,c_-)\le4C$. Also by $xH_\mu \notin S_\mu(g,1;D)$, we have $\widehat{d}_\mu(c_-,c_+)\le D$. hence,
\begin{align*}
    \widehat{d}_\mu(a_+,a_-)
\le \widehat{d}_\mu(a_+,c_-)+\widehat{d}_\mu(c_-,c_+)+\widehat{d}_\mu(c_+,a_-)
\le 4C+D+3C \le5D.
\end{align*}
In both 3 a) and 3 b), we have $\widehat{d}_\mu(a_+,a_-)\le5D$, hence by Lemma \ref{Lem:two arrays} and Remark \ref{Rem:bound norm by K}, we have
\begin{align*}
    \|\xi_{xH_\mu}\|_2
    &=\|\widetilde{R}(1,h;xH_\mu)\|_2
    \le \|\widetilde{R}(1,h;xH_\mu)-\widetilde{r}(a_-,a_+)\|_2+\|\widetilde{r}(a_-,a_+)\|_2 \\
    &\le 2K_D+K_{5D} \le10K_{10D}.
\end{align*}

Here, we used $\|\widetilde{r}(a_-,a_+)\|_2=\|\widetilde{r}(1,a_-^{-1}a_+)\|_2
\le K_{a_-^{-1}a_+}\le K_{5D}$. When only $q$ penetrates $xH_\mu$, we can show $\|\xi_{xH_\mu}\|_2\le10K_{10D}$ in the same way. \par
Summarizing Case 1,2,3, if $xH_\mu\in F_{1,h}$, we have \[\|\xi_{xH_\mu}\|_2\le10K_{10D}+K_{6C+L_g}+\|\widetilde{R}(g,1)\|_2.\]
We can argue similarly for $F_{h,g}$ and $F_{g,1}$ as well. Also, by Lemma \ref{Lem:3.9 of HO} (c), we have $|F_{1,h}|,|F_{h,g}|,|F_{g,1}|\le2$. Thus, for any $g,h\in G$ we have
\begin{align*}
    \|\widetilde{R}(1,h)+\widetilde{R}(h,g)+\widetilde{R}(g,1)\|_2
&\le \sum_{xH_\mu\in F_{1,h}\cup F_{h,g}\cup F_{g,1}}\|\xi_{xH_\mu}\|_2 \\
&\le \sum_{xH_\mu\in F_{1,h}\cup F_{h,g}\cup F_{g,1}}
\left(10K_{10D}+K_{6C+L_g}+\|\widetilde{R}(g,1)\|_2\right)\\
&\le 6\left(10K_{10D}+K_{6C+L_g}+\|\widetilde{R}(g,1)\|_2\right).
\end{align*}
\end{proof}

\begin{proof}[Proof of Proposition \ref{Prop:second array}]
We define a map $R\colon G \to \ell^2(G)$ by
\[R(g)=\widetilde{R}(1,g).\]
By Lemma \ref{Lem:norm of R} (a) and (b), we have for any $g\in G$, \[\lambda_G(g)R(g^{-1})=\widetilde{R}(g,1)=-\widetilde{R}(1,g)=-R(g).\]
Also, by Lemma \ref{Lem:bounded area}, we have for any $g\in G$,
\begin{align*}
    \sup_{h\in G}\|R(gh)-\lambda_G(g)R(h)\|_2
    &=\sup_{h\in G}\|R(g\cdot g^{-1}h)-\lambda_G(g)R(g^{-1}h)\|_2 \\
    &=\sup_{h\in G}\|\widetilde{R}(1,h)-\widetilde{R}(g,h)\|_2 \\
    &\le \sup_{h\in G}\left(\|\widetilde{R}(1,h)+\widetilde{R}(h,g)+\widetilde{R}(g,1)\|_2+\|\widetilde{R}(1,g)\|_2\right) \\
    &=\|\widetilde{R}(1,g)\|_2
    +\sup_{h\in G}\|\widetilde{R}(1,h)+\widetilde{R}(h,g)+\widetilde{R}(g,1)\|_2 \\
    &<\infty.
\end{align*}
Thus, $R$ is an array. For any $g\in G$, any $xH_\mu \in S_\mu(1,g;D)$, and any geodesic path $p$ in $\Gamma(X\cup \mathcal{H})$ from $1$ to $g$, let $a$ be the component of $p$ corresponding to $xH_\mu$. By Lemma \ref{Lem:difference} and Lemma \ref{Lem:norm of R} (c), we have
\begin{align*}
\|r(a_-^{-1}a_+)\|_2
&=\|\widetilde{r}(a_-,a_+)\|_2 \\
&\le\|\widetilde{R}(1,g;xH_\mu)\|_2+2K_D \\
&\le\|\widetilde{R}(1,g)\|_2+2K_D=\|R(g)\|_2+2K_D,
\end{align*}
hence $R$ satisfies (\ref{Eq:second array}) with a constant $K_\mu=2K_D$.
\end{proof}

\subsection{Proof of main theorem}\label{Subsection:proof of main thm}
\begin{prop}\label{Prop:main result}
Suppose that $G$ is a finitely generated group hyperbolic relative to a collection of subgroups $\{H_\mu\}_{\mu \in \Lambda}$ of $G$. If all subgroups $H_\mu$ are bi-exact, then $G$ is also bi-exact.
\end{prop}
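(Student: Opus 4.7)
The plan is to invoke Proposition \ref{Prop:equiv cond of bi-exact}(3), which reduces bi-exactness of $G$ to exactness of $G$ together with the existence of a proper array into an orthogonal representation weakly contained in the left regular representation. Exactness of $G$ follows from the known fact that a finitely generated relatively hyperbolic group is exact whenever its peripheral subgroups are exact, which applies here since each bi-exact $H_\mu$ is in particular exact. The bulk of the work is therefore the construction of a proper array, which is carried out by combining the two arrays produced in the previous sections.

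For each $\mu\in\Lambda$, bi-exactness of $H_\mu$ together with Proposition \ref{Prop:equiv cond of bi-exact}(2) furnishes a proper array $r_\mu\colon H_\mu\to\ell^2(H_\mu)$ into the left regular representation $(\ell^2(H_\mu),\lambda_{H_\mu})$. Feeding $r_\mu$ into Proposition \ref{Prop:second array} yields an array $R_\mu\colon G\to\ell^2(G)$ into $(\ell^2(G),\lambda_G)$ with an associated constant $K_\mu\ge 0$. Independently, Proposition \ref{Prop:first array} supplies an array $Q\colon G\to\ell^2(Y)$ into a representation $(\ell^2(Y),\pi)$ weakly contained in $(\ell^2(G),\lambda_G)$ by Remark \ref{Rem:weak containment}, and satisfying $d_{\widehat{\Gamma}}(1,g)\le\tfrac{1}{2}\|Q(g)\|_2^2$. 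Form the finite direct sum representation $\eta=\pi\oplus\bigoplus_{\mu\in\Lambda}\lambda_G$ on $\mathcal{K}=\ell^2(Y)\oplus\bigoplus_{\mu\in\Lambda}\ell^2(G)$ (note $|\Lambda|<\infty$ by Definition \ref{Def:rel hyp gp}), and set
\[
\Phi\colon G\to\mathcal{K}, \qquad \Phi(g)=Q(g)\oplus\bigoplus_{\mu\in\Lambda}R_\mu(g).
\]
Then $\eta$ is weakly contained in $\lambda_G$, and $\Phi$ satisfies conditions (1) and (2) of Definition \ref{Def:array} coordinate-wise, since $Q$ and each $R_\mu$ already do.

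The core of the argument is to verify properness of $\Phi$. Fix $N\in\NN$ and suppose $\|\Phi(g)\|\le N$. Then $\|Q(g)\|_2\le N$ forces $d_{\widehat{\Gamma}}(1,g)\le N^2/2$, and an elementary comparison between the coned-off Cayley graph and the relative Cayley graph gives $d_{X\cup\mathcal{H}}(1,g)\le L$ for some $L=L(N)$. Fix any geodesic $p$ from $1$ to $g$ in $\Gamma(G,X\cup\mathcal{H})$; each $H_\mu$-component of $p$ must consist of a single edge, since otherwise two consecutive $H_\mu$-edges could be merged into one, shortening $p$. Reading $p$ thus expresses $g$ as an ordered product of at most $L$ symbols, each either an element of the finite set $X$ or an element $h$ of some $H_\mu\setminus\{1\}$. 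I claim every such $H_\mu$-symbol $h$ belongs to a finite subset of $H_\mu$ depending only on $N$. If $\widehat{d}_\mu(1,h)\le D$, then $h$ lies in the finite set $\{h'\in H_\mu\mid\widehat{d}_\mu(1,h')\le D\}$ by Definition \ref{Def:hyp emb}(2). Otherwise $\widehat{d}_\mu(1,h)>D$, so the coset containing the endpoints of the corresponding component of $p$ is $(1,g;D)$-separating; then Proposition \ref{Prop:second array} yields $\|r_\mu(h)\|_2\le\|R_\mu(g)\|_2+K_\mu\le N+K_\mu$, and properness of $r_\mu$ confines $h$ to a finite subset of $H_\mu$. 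Since $|\Lambda|<\infty$, the total pool of admissible symbols is finite, and bounded word length forces $g$ into a finite set. This verifies properness of $\Phi$, completing the proof via Proposition \ref{Prop:equiv cond of bi-exact}(3).

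The main obstacle is precisely the coordination between the two arrays: $Q$ bounds the length of a geodesic word for $g$ in the relative Cayley graph (controlling how many ``syllables'' $g$ has), while the $R_\mu$'s bound each individual $H_\mu$-syllable; only the combined information pins $g$ down to finitely many possibilities. This mirrors the normal-form argument in the free-product case sketched in Section \ref{Subsection:overview}, with Hull--Osin's separating cosets playing the role of syllables and the first array replacing the syllable-count in the free-product normal form.
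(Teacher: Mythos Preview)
Your proof is correct and follows essentially the same approach as the paper: both build the direct-sum array $Q\oplus\bigoplus_\mu R_\mu$ into $\ell^2(Y)\oplus\bigoplus_\mu\ell^2(G)$ and verify properness via the identical two-case analysis on the $H_\mu$-syllables of a relative geodesic word (either $\widehat d_\mu\le D$, or the coset is separating and Proposition~\ref{Prop:second array} applies). The only step you leave implicit that the paper spells out is the passage from the one-sided array condition of Definition~\ref{Def:array}(2) to the two-sided estimate $\sup_k\|\Phi(gkh)-\eta_g\Phi(k)\|<\infty$ required in Proposition~\ref{Prop:equiv cond of bi-exact}(3), but this is a routine two-line consequence of anti-symmetry.
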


\begin{proof}
Note that $\Lambda$ is finite by definition. Because $H_\mu$'s are exact, $G$ is also exact by Corollary 3 of \cite{Oz1}. In the following, we will verify the condition of Proposition \ref{Prop:equiv cond of bi-exact} (3). We take a finite generating set $X$ of $G$, a unitary representation $(\ell^2(Y),\pi)$, and an array $Q$ as in Proposition \ref{Prop:first array}. Since every $H_\mu$ is bi-exact, there exists a proper array $r_\mu$ on $H_\mu$ into $(\ell^2(H_\mu),\lambda_{H_\mu})$ for each $\mu\in\Lambda$ by Proposition \ref{Prop:equiv cond of bi-exact} (2). By Proposition \ref{Prop:second array}, for each $r_\mu$, there exist an array $R_\mu$ on $G$ into $(\ell^2(G),\lambda_{G})$ and a constant $K_\mu \ge 0$ satisfying (\ref{Eq:second array}). Here, we used the fact that $\{H_\mu\}_{\mu \in \Lambda}$ is hyperbolically embedded in $(G,X)$ by Proposition \ref{Prop:hyp emb}. We define a Hilbert space 
$\mathcal{K}$ and a unitary representation $\rho$ of $G$ by
\begin{equation}\label{Eq:main rep}
  \mathcal{K}=\ell^2(Y) \oplus \Big(\bigoplus_{\mu\in\Lambda}\ell^2(G)\Big),~~~~~
\rho=\pi \oplus \Big(\bigoplus_{\mu\in\Lambda}\lambda_G \Big).  
\end{equation}
Since $(\mathcal{K},\rho)$ is a direct sum of copies of $(\ell^2(G),\lambda_G)$ by Remark \ref{Rem:weak containment}, $(\mathcal{K},\rho)$ is weakly contained by $(\ell^2(G),\lambda_G)$.
Now, we define a map
\begin{equation}\label{Eq:main array}
  P\colon G\ni g\mapsto \left(Q(g),(R_\mu(g))_{\mu\in\Lambda}\right) \in \ell^2(Y)\oplus \Big(\bigoplus_{\mu\in\Lambda}\ell^2(G) \Big)
  =\mathcal{K}.  
\end{equation}
Because $Q$ and $R_\mu$'s are arrays, $P$ is an array on $G$ into $(\mathcal{K},\rho)$. Hence, for any $g,h,k\in G$, we have
\begin{align*}
    \|P(kh)-\rho_gP(g^{-1}k)\| &\le\|P(kh)-P(k)\|+\|P(k)-\rho_gP(g^{-1}k)\| \\
    &=\|-\rho_{kh}P((kh)^{-1})+\rho_{k}P(k^{-1})\|+\|\rho_{g^{-1}}P(k)-P(g^{-1}k)\| \\
    &=\|-P(h^{-1}k^{-1})+\rho_{h^{-1}}P(k^{-1})\|+\|\rho_{g^{-1}}P(k)-P(g^{-1}k)\| \\
    &\le C(h^{-1})+C(g^{-1}),
\end{align*}
where we denote for each $s\in G$, 
\[
C(s)=\sup_{t\in G}\|P(st)-\rho_s(P(t))\|<\infty.
\]

Hence, for any $g,h\in G$, we have
\[\sup_{k\in G}\|P(gkh)-\rho_gP(k)\|
=\sup_{k\in G}\|P(g(g^{-1}k)h)-\rho_gP(g^{-1}k)\|\le C(h^{-1})+C(g^{-1}).\]
\par
Finally, we will show that $P$ is proper. Let $N\in\NN$ and $g\in G$ satisfy $\|P(g)\|\le N$. Since $\|Q(g)\|_2^2+\sum_{\mu\in\Lambda}\|R_\mu(g)\|_2^2=\|P(g)\|^2$, we get $\|Q(g)\|\le N$ and $\|R_\mu(g)\|\le N$ for any $\mu\in\Lambda$. Because the identity map $id\colon (G,d_{\widehat{\Gamma}})\to (G,d_{X\cup\mathcal{H}})$ is bi-Lipschitz, there exists $\alpha\in\NN$ such that $d_{X\cup\mathcal{H}}(1,x)\le \alpha d_{\widehat{\Gamma}}(1,x)$ for any $x\in G$. By (\ref{Eq:first array}), this implies 
\[
d_{X\cup\mathcal{H}}(1,g)\le \alpha d_{\widehat{\Gamma}}(1,g)\le \frac{1}{2}\alpha\|Q(g)\|_2^2\le \frac{1}{2}\alpha N^2.
 \]
We denote $\alpha_N=\frac{1}{2}\alpha N^2$ for simplicity. \par
Let $g=w_0h_1w_1\cdots h_nw_n$ be the label of a geodesic path from $1$ to $g$ in $\Gamma(G,X\cup\mathcal{H})$, where $w_i$ is a word in the alphabet $X\cup X^{-1}$ and $h_i\in \bigsqcup_{\mu\in\Lambda}(H_\mu\setminus\{1\})$. We have \[|w_0|+1+|w_1|+\cdots+1+|w_n|=d_{X\cup\mathcal{H}}(1,g)\le \alpha_N,\] 
where $|w_i|$ denotes the number of letters in $w_i$. In particular, we have $n\le \alpha_N$ and $|w_i|\le \alpha_N$ for any $w_i$. \par
On the other hand, for each $h_i$, there exists $\mu\in\Lambda$ such that $h_i\in H_\mu$. For simplicity, we denote
\[x_i=w_0h_1\cdots w_{i-1}.\] 
If $x_iH_\mu\notin S_\mu(1,g;D)$, then we have $\widehat{d}_\mu(1,h_i)=\widehat{d}_\mu(x_i,x_ih_i)\le D$ by definition of separating cosets. If $x_iH_\mu\in S_\mu(1,g;D)$, then by (\ref{Eq:second array}), we have
\begin{align*}
\|r_\mu(h_i)\|_2
\le\|R_\mu(g)\|_2+K_\mu
\le N+K_\mu.
\end{align*}
In either case, we have $h_i\in A_\mu$, where
\[A_\mu=\{h\in H_\mu \mid \widehat{d}_\mu(1,h)\le D\}\cup\{h\in H_\mu \mid \|r_\mu(h)\|_2\le N+K_\mu\}.\]
Note that $A_\mu$ is finite, because $\widehat{d}_\mu$ is a locally finite metric on $H_\mu$ and $r_\mu$ is proper. Therefore, we have
\[\{g\in G\mid\|P(g)\|\le N\}\subset
\Big\{w_0h_1w_1\cdots h_nw_n \;\Big|\; n\le \alpha_N,\; |w_i|\le \alpha_N,\; h_i\in \bigcup_{\mu\in\Lambda}A_\mu\Big\}.
\]
Since $X$, $A_\mu$'s, and $\Lambda$ are finite, the set on the right-hand side above is finite, hence $P$ is proper.
\end{proof}

\begin{lem}\label{Lem:subgroup of bi-exact group}
Subgroups of countable bi-exact groups are also bi-exact.
\end{lem}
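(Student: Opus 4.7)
The plan is to deduce bi-exactness of a subgroup $H\le G$ of a countable bi-exact group directly from the original definition, which combines exactness with a map $\mu\colon G\to{\rm Prob}(G)$ satisfying $\lim_{x\to\infty}\|\mu(sxt)-s.\mu(x)\|_1=0$ for all $s,t\in G$. First I would note that exactness passes to subgroups: a topologically amenable action of $G$ on a compact Hausdorff space restricts to a topologically amenable action of $H$, so $H$ is exact. The case of finite $H$ is trivial, since finite groups are amenable and amenable groups are bi-exact, so I assume $H$ is infinite.

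Next I would fix a set-theoretic right transversal $T\subset G$ for the right cosets $H\backslash G$, so that every $g\in G$ decomposes uniquely as $g=\pi(g)\tau(g)$ with $\pi(g)\in H$ and $\tau(g)\in T$. The key observation is that the resulting retraction $\pi\colon G\to H$ is left $H$-equivariant: for $h\in H$ and $g\in G$, the identity $Hhg=Hg$ combined with uniqueness of the decomposition forces $\tau(hg)=\tau(g)$ and hence $\pi(hg)=h\pi(g)$. I would then define $\mu_H\colon H\to{\rm Prob}(H)$ by $\mu_H(h)=\pi_*\mu(h)$, the pushforward under $\pi$ of the probability measure on $G$.

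The verification proceeds as follows. Equivariance of $\pi$ upgrades to equivariance of $\pi_*$ with respect to the left $H$-actions on ${\rm Prob}(G)$ and ${\rm Prob}(H)$, so for $s,t,x\in H$ one has $\mu_H(sxt)-s.\mu_H(x)=\pi_*\bigl(\mu(sxt)-s.\mu(x)\bigr)$. Since the pushforward by any map is an $\ell^1$-contraction, this gives $\|\mu_H(sxt)-s.\mu_H(x)\|_1\le\|\mu(sxt)-s.\mu(x)\|_1$. The right-hand side tends to $0$ as $x\to\infty$ in $G$ by the defining property of $\mu$, and going to infinity in $H$ implies going to infinity in $G$ (intersecting any finite subset of $G$ with $H$ yields a finite subset of $H$). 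Thus $\mu_H$ witnesses bi-exactness of $H$.

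There is no substantive obstacle in this argument: everything reduces to the elementary fact that a right transversal yields a left $H$-equivariant retraction $G\to H$, together with the $\ell^1$-contractivity of pushforward on probability measures. The main conceptual point worth emphasizing in the write-up is the careful matching of the side on which the action is taken (the transversal for \emph{right} cosets produces a \emph{left} $H$-equivariant map), which is precisely what allows the two-sided condition $\mu(sxt)$ appearing in the definition to survive the passage to $H$.
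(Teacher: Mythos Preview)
Your argument is correct, but it takes a genuinely different route from the paper's. The paper works through the array characterization of bi-exactness (Proposition~\ref{Prop:equiv cond of bi-exact}): it takes a proper array $r\colon G\to\ell^2(G)$ into $(\ell^2(G),\lambda_G)$, observes that the restricted representation $(\ell^2(G),\lambda_G|_H)$ decomposes as a direct sum of copies of $(\ell^2(H),\lambda_H)$ and is therefore weakly contained in the regular representation of $H$, and checks that $r|_H$ is a proper array on $H$ into this representation; condition~(3) of Proposition~\ref{Prop:equiv cond of bi-exact} then finishes. Your approach instead stays with the original definition and pushes the map $\mu$ forward along a left $H$-equivariant retraction $\pi\colon G\to H$ built from a right transversal. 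Both proofs are short and both rely on the same background fact that exactness passes to subgroups; the paper's version has the minor advantage that it needs no case split for finite $H$ and fits the array framework already set up for the main theorem, while yours is more self-contained in that it does not invoke the equivalence of Proposition~\ref{Prop:equiv cond of bi-exact}.
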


\begin{proof}
Let $G$ be a countable bi-exact group and $H$ be a subgroup of $G$. $H$ is exact, because $G$ is exact and subgroups of exact groups are exact (cf. \cite{BO}). By Proposition \ref{Prop:equiv cond of bi-exact} (2), there exists a proper array $r\colon G\to\ell^2(G)$ on $G$ into $(\ell^2(G),\lambda_G)$. Note that the restriction of $\lambda_G$ to $H$, that is, $(\ell^2(G),\lambda_G|_H)$ is unitarily isomorphic to $(\bigoplus_{Hx\in H\backslash G} \ell^2(H),\lambda_H)$, hence we have $(\ell^2(G),\lambda_G|_H)\prec(\ell^2(H),\lambda_H)$. Also, it is straightforward to show that $r|_H\colon H\to \ell^2(G)$ is a proper array on $H$ into $(\ell^2(G),\lambda_G|_H)$. Thus, by Proposition \ref{Prop:equiv cond of bi-exact} (3), $H$ is bi-exact.
\end{proof}

\begin{proof}[Proof of Theorem \ref{Thm:main}]
It follows from Proposition \ref{Prop:main result} and Lemma \ref{Lem:subgroup of bi-exact group}.
\end{proof}



\begin{thebibliography}{10}

\bibitem{Bow}
B.~H. Bowditch, \emph{Relatively hyperbolic groups}, Internat. J. Algebra
  Comput. \textbf{22} (2012), no.~3, 1250016, 66. \MR{2922380}

\bibitem{BO}
N.~P. Brown and N.~Ozawa, \emph{{$C^*$}-algebras and finite-dimensional
  approximations}, Graduate Studies in Mathematics, vol.~88, American
  Mathematical Society, Providence, RI, 2008. \MR{2391387}

\bibitem{CI}
I.~Chifan and A.~Ioana, \emph{Amalgamated free product rigidity for group von
  {N}eumann algebras}, Adv. Math. \textbf{329} (2018), 819--850. \MR{3783429}

\bibitem{CSU}
I.~Chifan, T.~Sinclair, and B.~Udrea, \emph{On the structural theory of {$\rm
  II_1$} factors of negatively curved groups, {II}: {A}ctions by product
  groups}, Adv. Math. \textbf{245} (2013), 208--236. \MR{3084428}

\bibitem{Da}
F.~Dahmani, \emph{Les groupes relativement hyperboliques et leurs bords thèse
  de doctorat, université louis pasteur, strasbourg \rm{I}. (2003)},
  \url{https://www-fourier.ujf-grenoble.fr/~dahmani/Files/These.pdf}.

\bibitem{DGO}
F.~Dahmani, V.~Guirardel, and D.~Osin, \emph{Hyperbolically embedded subgroups
  and rotating families in groups acting on hyperbolic spaces}, Mem. Amer.
  Math. Soc. \textbf{245} (2017), no.~1156, v+152. \MR{3589159}

\bibitem{Hr}
G.~C. Hruska, \emph{Relative hyperbolicity and relative quasiconvexity for
  countable groups}, Algebr. Geom. Topol. \textbf{10} (2010), no.~3,
  1807--1856. \MR{2684983}

\bibitem{HO}
M.~Hull and D.~Osin, \emph{Induced quasicocycles on groups with hyperbolically
  embedded subgroups}, Algebr. Geom. Topol. \textbf{13} (2013), no.~5,
  2635--2665. \MR{3116299}

\bibitem{Mi}
I.~Mineyev, \emph{Straightening and bounded cohomology of hyperbolic groups},
  Geom. Funct. Anal. \textbf{11} (2001), no.~4, 807--839. \MR{1866802}

\bibitem{MY}
I.~Mineyev and A.~Yaman, \emph{Relative hyperbolicity and bounded cohomology},
  \url{https://faculty.math.illinois.edu/~mineyev/math/art/rel-hyp.pdf}.

\bibitem{Os1}
D.~V. Osin, \emph{Relatively hyperbolic groups: intrinsic geometry, algebraic
  properties, and algorithmic problems}, Mem. Amer. Math. Soc. \textbf{179}
  (2006), no.~843, vi+100. \MR{2182268}

\bibitem{Os2}
\bysame, \emph{Acylindrically hyperbolic groups}, Trans. Amer. Math. Soc.
  \textbf{368} (2016), no.~2, 851--888. \MR{3430352}

\bibitem{Oz00}
N.~Ozawa, \emph{Amenable actions and exactness for discrete groups}, C. R.
  Acad. Sci. Paris S\'{e}r. I Math. \textbf{330} (2000), no.~8, 691--695.
  \MR{1763912}

\bibitem{Oz3}
\bysame, \emph{Solid von {N}eumann algebras}, Acta Math. \textbf{192} (2004),
  no.~1, 111--117. \MR{2079600}

\bibitem{Oz1}
\bysame, \emph{Boundary amenability of relatively hyperbolic groups}, Topology
  Appl. \textbf{153} (2006), no.~14, 2624--2630. \MR{2243738}

\bibitem{Oz2}
\bysame, \emph{A {K}urosh-type theorem for type {$\rm II_1$} factors}, Int.
  Math. Res. Not. (2006), Art. ID 97560, 21. \MR{2211141}

\bibitem{Oz09}
\bysame, \emph{An example of a solid von {N}eumann algebra}, Hokkaido Math. J.
  \textbf{38} (2009), no.~3, 557--561. \MR{2548235}

\bibitem{OP}
N.~Ozawa and S.~Popa, \emph{Some prime factorization results for type {${\rm
  II}_1$} factors}, Invent. Math. \textbf{156} (2004), no.~2, 223--234.
  \MR{2052608}

\bibitem{PV}
S.~Popa and S.~Vaes, \emph{Unique {C}artan decomposition for {$\rm II_{1}$}
  factors arising from arbitrary actions of hyperbolic groups}, J. Reine Angew.
  Math. \textbf{694} (2014), 215--239. \MR{3259044}

\bibitem{Sako}
H.~Sako, \emph{Measure equivalence rigidity and bi-exactness of groups}, J.
  Funct. Anal. \textbf{257} (2009), no.~10, 3167--3202. \MR{2568688}

\end{thebibliography}

\providecommand{\bysame}{\leavevmode\hbox to3em{\hrulefill}\thinspace}
\providecommand{\MR}{\relax\ifhmode\unskip\space\fi MR }
\providecommand{\MRhref}[2]{%
  \href{http://www.ams.org/mathscinet-getitem?mr=#1}{#2}
}
\providecommand{\href}[2]{#2}

\vspace{5mm}

\noindent  Department of Mathematics, Vanderbilt University, Nashville 37240, U.S.A.

\noindent E-mail: \emph{koichi.oyakawa@vanderbilt.edu}

\end{document}